\theoremstyle{plain} 
\newtheorem{theorem}{Theorem}[section] 
\newtheorem{corollary}[theorem]{Corollary} 
\newtheorem{lemma}[theorem]{Lemma} 
\newtheorem{proposition}[theorem]{Proposition} 
\theoremstyle{definition} 
\newtheorem{definition}{Definition}[section] 
\theoremstyle{remark} 
\newtheorem{remark}{Remark}[section]
\newtheorem{example}{Example}[section] 
\numberwithin{equation}{section}
\newcommand{\E}{\mathbb{E}} 
\newcommand{\N}{\mathbb{N}} 
\newcommand{\prob}{\mathbb{P}} 
\newcommand{\I}{\mathbb{I}} 
\newcommand{\R}{\mathbb{R}} 
\newcommand{\F}{\mathcal{F}} 
\newcommand{\G}{\mathcal{G}} 
\author{Fred Espen Benth\thanks{Department of Mathematics, University of Oslo, 0316 Blindern, Norway; fredb@math.uio.no.} \and Silvia Lavagnini\thanks{Department of Mathematics, University of Oslo, 0316 Blindern, Norway; silval@math.uio.no.}}
\title{Correlators of Polynomial Processes}
\newcommand{\bigzero}{\mbox{\normalfont\Large\bfseries 0}}
\begin{document}
\maketitle

\begin{abstract}
In the setting of polynomial jump-diffusion dynamics, we provide an explicit formula for computing correlators, namely, cross-moments of the process at different time points along its path. The formula appears as a linear combination of exponentials of the generator matrix, extending the well-known moment formula for polynomial processes. The developed framework can, for example, be applied in financial pricing, such as for path-dependent options and in a stochastic volatility models context. In applications to options, having closed and compact formulations is attractive for sensitivity analysis and risk management, since Greeks can be derived explicitly. 
\end{abstract}
 
\paragraph*{Keywords} Polynomial jump-diffusion process; Correlators; Eliminating and duplicating matrices; Generator matrix; Hankel matrix; Stochastic volatility; Path-dependent option; Greeks.

\section{Introduction}
\label{intropol}
A jump-diffusion process is called \emph{polynomial} if its extended generator maps any polynomial function to a polynomial function of equal or lower degree. As a consequence, expectations of any polynomial in the future state of the process, conditioned on the information up to the current state, are given by a polynomial of the current state. Conditional moments can thus be calculated in closed form without any knowledge of the probability distribution nor of the characteristic function, up to the computation of the exponential of the generator matrix. The class of polynomial processes includes exponential Lévy processes and affine processes, with the Ornstein--Uhlenbeck processes as a canonical example. Moreover, polynomial jump-diffusions have been studied both in a Markovian \cite{cuchiero2011, cuchiero2012} and non-Markovian \cite{filipovic2020} contexts. We refer to \cite{filipovic2016} for a mathematical analysis on polynomial diffusions.

Because of their closed moment formula, polynomial processes have many applications in finance and one of the first is addressed in \cite{zhou2003}. In the literature, we find examples on interest rates \cite{delbaen2002, filipovic2017},  stochastic volatility models \cite{ackerer2020, ackerer2018, filipovic2016}, option pricing \cite{ackerer2020b, filipovic2020} and energy modelling \cite{kleisinger2020, ware2019}. In \cite{cuchiero2012} the properties of jump-diffusion processes are exploited to improve the performance of computational and statistical methods, such as the generalized method of moments, and for variance reduction techniques in Monte Carlo methods. Further examples cover stochastic portfolio theory \cite{cuchiero2019}.

We consider a stochastic basis $(\Omega, \F, \prob)$ with a filtration $\{\F_t\}_{t\ge 0}$ and a polynomial jump-diffusion real-valued process $Y$. For any polynomial function $p$ of degree $n$ with vector of coefficients $\vec{p}\in \R^{n+1}$ with respect to a vector basis of polynomials $H_n(x)\in \R^{n+1}$, the moment formula gives 
\begin{equation*}
	\E\left[p(Y(T))\left.\right|\F_t\right] =  \vec{p}^{\top}e^{G_n(T-t)}H_n(Y(t)), \quad 0\le t\le T,
\end{equation*}
with  $G_n\in \R^{(n+1)\times (n+1)}$ the corresponding generator matrix. In this article, we extend the framework to $m+1$ polynomial functions and study conditional expectations of the form
\begin{equation}
	\label{corrn}
	\E\left[p_m\left(Y(s_0)\right)p_{m-1}\left(Y(s_1)\right)\cdot \dots \cdot p_0\left(Y(s_m)\right)\left.\right|\F_t\right]
\end{equation}
which we call \emph{$(m+1)$-point correlators}.
Here $t < s_0 < s_1 < \dots < s_m<T <\infty$ and $p_k$ are polynomial functions of degree $n_k$, $k=0,\dots, m$. We denote by $n:=\max \left\{n_0,\dots, n_m\right\}$ the maximal degree. 

For $m=0$ equation \eqref{corrn} corresponds to computing moments of $Y$, which are given by the moment formula. Hence the $(m+1)$-point correlators can in principle be obtained for any $m>0$ by iterating the moment formula. For example, for $m=1$ one applies the tower rule for $\F_t\subseteq \F_{s_0}$ to get
\begin{equation}
	\label{corrn3p}
	\E\left[p_1\left(Y(s_0)\right) p_0\left(Y(s_1)\right)\left.\right|\F_t\right] = \E\left[p_1\left(Y(s_0)\right) q_0\left(Y(s_0); s_1-s_0\right)\left.\right|\F_t\right]
\end{equation}
where $$q_0\left(Y(s_0); s_1-s_0\right) := \E\left[p_0\left(Y(s_1)\right)\left.\right|\F_{s_0}\right]= \vec{p}_{0}^{\top}e^{G_{n_0}(s_1-s_0)}H_{n_0}(Y(s_0))$$ is the polynomial obtained by applying the moment formula to $p_0(x)$. In particular, $q_0(x;s)$ has time dependent coefficients $q_{0,k}^s$, $s\ge0$, $k=0,\dots,n_0$. The product $\tilde{p}_1(x; s):=p_1\left(x\right) q_0\left(x; s\right)$ is then a polynomial function of degree $n_0+n_1$ with time dependent coefficients given by
\begin{equation*}
	\tilde{p}_{1,j}^s= \sum_{k+i=j} p_{1,i}\,q_{0,k}^s \quad \mbox{for } j=0,\dots, n_0+n_1  \mbox{ and } s\ge0.
\end{equation*}
Another application of the moment formula, this time to $\tilde{p}_1(x; s)$, produces an expression for \eqref{corrn3p} of the form
\begin{equation*}
	\E\left[p_1\left(Y(s_0)\right) p_0\left(Y(s_1)\right)\left.\right|\F_t\right] =\vec{\tilde{p}}_{1}^{s_1-s_0\,\top} e^{G_{n_0+n_1}(s_0-t)} H_{n_0+n_1}(Y(t)).
\end{equation*}
This procedure can then be iterated for larger values of $m$. However, performing the calculations is non-trivial because of the algebraic complexity of manipulating the expressions involved. With this article we make headway on this issue by providing a fully explicit closed formula for correlators.

The key for proving the moment formula lies in the existence of the generator matrix $G_n$: for a fixed $n$ and a fixed basis vector of polynomials $H_n(x)$, this is the linear representation of the action of the extended generator on $H_n(x)$. However, for $m=1$ we must deal with the product of two basis vectors, which is an object of the form $H_n(x)H_n(x)^{\top}\in \R^{(n+1)\times (n+1)}$ and for which a generator matrix cannot be constructed. We then consider the vectorization of $H_n(x)H_n(x)^{\top}$, namely we stack the columns of $H_n(x)H_n(x)^{\top}$ into a single column vector. The matrix $H_n(x)H_n(x)^{\top}$ contains however redundant terms and so does its vectorization. For $H_n(x):=(1,x,x^2,\dots,x^n)^{\top}$, which is the case we consider here, redundant terms means repeated powers of $x$. This implies that the corresponding generator matrix contains equal rows and/or zero columns, making it impossible to generalize the framework to $m>1$.

We resolve this issue by introducing two linear operators, the first of which we call the \emph{L-eliminating matrix}. This eliminates from the vectorization of $H_n(x)H_n(x)^{\top}$ the redundant powers of $x$ and returns a vector that coincides with $H_{2n}(x)$, for which there exists the generator matrix $G_{2n}$. Using the inverse operator, called the \emph{L-duplicating matrix}, we then recover the full-dimensional vector, and, finally, via inverse-vectorization we obtain the linear operator required, which allows to compute the correlator formula for $m=1$. We summarize these steps in the following graph:
\begin{equation*}
	\resizebox{1\textwidth}{!}{
		\xymatrix@R-=0.5cm{
			H_{n}(x)H_{n}(x)^{\top} \ar[r] \ar[d] &\fcolorbox{black}{white}{vectorization} \ar[r] & \fcolorbox{black}{white}{L-eliminating matrix} \ar[r] &  H_{2n}(x)\ar[d]\\
			\fcolorbox{black}{white}{extended generator}  \ar[d]& & & \fcolorbox{black}{white}{generator matrix}\ar[d]\\
			\G \left(H_{n}(x)H_{n}(x)^{\top}\right) & \ar[l]  \fcolorbox{black}{white}{inverse-vectorization} &  \ar[l] \fcolorbox{black}{white}{L-duplicating matrix} & \ar[l] G_{2n} H_{2n}(x)}
	}
\end{equation*}
These steps work also when increasing further the number of polynomials. For $m+1>1$, we must deal with $m+1>1$ basis vectors $H_n(x)$. 
This leads to an object whose structure is more complex and requires the appropriate eliminating and duplicating matrices, for which we prove a recursion formula in the number of polynomials $m\ge 1$. With these, we compute the general correlator formula. 

As we shall see, for $H_n(x)=(1,x,x^2,\dots,x^n)^{\top}$, the matrix $H_n(x)H_n(x)^{\top}\in \R^{(n+1)\times(n+1)}$ is a so-called \emph{Hankel matrix}, for which the elements on the same skew-diagonals coincide. Hankel matrices constitute an important family of matrices that play a fundamental role in diverse fields, from computer science to engineering, mathematics and statistics \cite{peller2012}. They are indeed applied in theory of moments \cite{fasino1995, munkhammer2017}, time series analysis \cite{golyandina2001, hassani2010}, signal analysis \cite{jain2014, jain2015}, and in theory of orthogonal polynomials \cite{townsend2018} among other areas. This means that (part of) our analysis might have applications in many different fields, going beyond the polynomial jump-diffusion theory studied here. We also mention that a Hankel matrix is a "row-reversed" Toeplitz matrix, so that some of the results proved in the current article can be adapted to this other class of matrices for possibly further applications.

We point out that our correlator formula is not really an alternative to applying iteratively the moment formula, as, indeed, it strongly relies on it combined with the tower rule for $\F_t\subseteq \F_{s_0}\subseteq \cdots\subseteq\F_{s_m}$. It however provides a solution to the algebraic burden that arises when applying the moment formula directly. The correlator formula is indeed fully explicit, while getting an explicit expression is not straightforward when iterating the moment formula directly. Since having closed formulas is an advantage for example in those applications that require to differentiate, such as for computing Greeks, our approach is thus more convenient. Not surprisingly, numerical experiments show that the correlator values obtained with our formula coincide with the values obtained by iterating the moment formula. Moreover, the time costs for the two approaches is comparable up to around $m=10$ polynomials. We compare the results with a Monte Carlo approach, showing that this latter one is outperformed from a time cost point of view, in addition to exhibiting low degrees of accuracy. We stress that the correlator formula only involves linear combinations of the matrix exponential of the generator matrix. Assuming these exponential matrices to be exact, we thus have a formula for correlators which in practice is exact.

We finally provide two recursion formulas for the generator matrix and its matrix exponential.  Despite several approaches have been studied for calculating efficiently the matrix exponential of a block triangular matrix \cite{kressner2017, higham2005}, up to our knowledge, no rigorous study in terms of the building blocks has been developed yet concerning the generator matrix and its exponential. These results can then be applied for analytical purposes as mentioned before. We point out that our framework is based on the monomial basis, which appears convenient for obtaining formulas more easily and explicitly. However, it can be extended to any other polynomial basis, provided the matrix for the change of basis. For practical applications, orthogonal basis are indeed more convenient, but analytically more challenging.

The rest of the paper is organized as follows. In Section \ref{mot} we clarify the name correlators and give some financial motivations for studying them. In Section \ref{polprosssec} we introduce rigorously polynomial processes and the generator matrix. In Section \ref{correlators} we solve the two-point correlators problem, presenting the main tools and framework which allows to solve the $(m+1)$-point correlators problem in Section \ref{hcorr}. In Section \ref{recursionG} we provide two recursions for the generator matrix and its matrix exponential, together with the formula for the change of basis. Finally, in Section \ref{experiments} we consider some applications and numerical aspects and Section \ref{conc} concludes with some remarks. Appendix \ref{appApol} contains some combinatorial properties of the operators introduced in the paper and Appendix \ref{el_dup} the proofs of the main results.

\subsection{Motivations}
\label{mot}
In \cite[Section 9.3]{barndorff2018} the authors define the concept of \textit{correlator}, a standard tool in turbulence theory. For $t< s_0 < s_1<T <\infty$ and $k_0,k_1 \in \N$, the correlator of order $(k_0,k_1)$ between $Y(s_0)$ and $Y(s_1)$ is a generalization of the autocorrelation defined by
\begin{equation*}
	\mathrm{Corr}_{k_0,k_1}(s_0, s_1; t) = \frac{\E\left[\left.Y(s_0)^{k_0}Y(s_1)^{k_1}\right|\F_t \right]}{\E\left[\left.Y(s_0)^{k_0}\right|\F_t \right]\E\left[\left.Y(s_1)^{k_1}\right|\F_t \right]}.
\end{equation*}
In this article we extend this definition of correlator to any expectation like the one in equation \eqref{corrn}.

We introduce now two possible applications:  Asian option pricing and pricing in the context of stochastic volatility models. We intend to motivate our analysis, leaving details aside for future work.

\subsubsection{Path-dependent options}
\label{pathdependent}
We consider path-dependent options, such as Asian options, for which the entire path of the price process within the settlement period $[t,T]$, is taken into account by the payoff function \cite{kemna1990}. If $Y$ is the risk-neutral price dynamics of the underlying asset, $r>0$ the risk-free interest rate and $\varphi$ the payoff function, the discounted price at time $t$ for an Asian-style option settled against the discrete arithmetic average of the spot price $Y$ in the settlement period is given by
\begin{equation}
	\label{pay}
	\Pi(t) = e^{-r(T-t)}\E\left[\left.\varphi\left(\frac{1}{m+1}\sum_{j=0}^{m}Y(s_j)\right)\right|\F_t\right] \qquad \mbox{for } t<s_0 <s_1<\dots < s_m = T \mbox{ and } m\ge0.
\end{equation}
This kind of options was traded a decade ago at Nord Pool, the Nordic commodity market for electricity \cite{weron2007}. Other classes of derivatives of similar kind are calendar spread options and options on baskets of assets evaluated at different times, as well as Asian options with continuous averaging.

For $\varphi$ a real-valued continuous function on a bounded interval, we consider $\hat{\varphi}$ as the polynomial approximation of $\varphi$, e.g., by Hermite polynomials or Taylor expansions, depending on the nature of $\varphi$ itself. Then the price for the Asian option in equation \eqref{pay} is found by
\begin{equation*}
	\label{priceasian}
	\Pi(t)\approx  e^{-r(T-t)}\, \E\left[\left.\hat{\varphi}\left(\frac{1}{m+1}\sum_{j=0}^{m}Y(s_j)\right)\right|\F_t\right] = e^{-r(T-t)}\, \sum_{\boldsymbol{k}}\alpha_{\boldsymbol{k}}\E\left[\left.Y(s_0)^{k_0} Y(s_1)^{k_1} \cdots Y(s_m)^{k_m}\right|\F_t\right]
\end{equation*}
for certain coefficients $\{\alpha_{\boldsymbol{k}}\}_{\boldsymbol{k}}$ and the multi-index $\boldsymbol{k} = (k_1, \cdots, k_m)$. 
This leads to study conditional expectations of the form  $\E\left[\left.Y(s_0)^{k_0} Y(s_1)^{k_1} \cdots Y(s_m)^{k_m}\right|\F_t\right]$, which is a particular instance of equation \eqref{corrn} obtained with $p_j(x) =x^{k_j}$, $j=0,\dots, m$. In particular, in \cite{mio2021} the author derives explicit price formulas for call-style discrete average arithmetic Asian options by following the approach just described, namely by approximating the payoff function with orthogonal polynomials and by the correlator formula developed in this article.

\subsubsection{Stochastic volatility models}
\label{stochvol}
For $0\le t \le T$ we consider the process $X$ defined by $X(T) = \int_{t}^{T} \sigma(s)dB(s),$
with $B$ a standard Brownian motion and $\sigma$ a volatility process which we assume to be independent from $B$. If $\varphi$ is the payoff function and $r>0$ the risk-free interest rate, we want to price a financial derivative like follows:
\begin{equation*}
	\label{pricePi}
	\Pi(t)  = e^{-r(T-t)}\,\E \left[ \left.\varphi \left( X(T)\right)\right| \F_t \right].
\end{equation*} 
A possible approach suggested in \cite{carr1999} is to consider the Fourier transform $\hat{\varphi}$ of $\varphi$. Under appropriate integrability conditions on $\varphi$, we then write that $\varphi(x) = \int_{-\infty}^{\infty} \hat{\varphi}(z)e^{2\pi i x z} dz$, and the option price becomes
\begin{equation*}
	\label{price2Pi}
	\Pi(t)  =  e^{-r(T-t)}\, \E \left[ \left.\int_{-\infty}^{\infty} \hat{\varphi}(z)e^{2\pi i X(T) z} dz\right| \F_t \right].
\end{equation*} 
For $\sigma$ and $B$ independent, by the tower rule, we now condition with respect to the filtration $\{\F_t^{\sigma}\}_{t\ge 0}$ generated by $\sigma$ up to time $T$. The process $X(T)$ has then a Gaussian distribution with mean $0$ and variance $\int_t^T\sigma^2(s) ds$, hence
\begin{equation*}
	\label{price3}
	\Pi(t)
	 =  e^{-r(T-t)}\,\E \left[\left. \int_{-\infty}^{\infty} \hat{\varphi}(z) e^{-2\pi^2 z^2 \int_t^T\sigma^2(s) ds}  dz  \right| \F_t \right]=e^{-r(T-t)}\int_{-\infty}^{\infty} \hat{\varphi}(z) \,\E \left[\left. e^{\lambda \int_t^T\sigma^2(s) ds}  \right| \F_t \right]dz
\end{equation*}
for $\lambda\leq 0$. By considering the Taylor expansion for the exponential function, the expectation becomes
\begin{equation}
	\E \left[\left. e^{\lambda \int_t^T\sigma^2(s) ds}  \right| \F_t \right] = \E \left[\left. \sum_{k=0}^{\infty}\frac{1}{k!}\left(\lambda \int_t^T\sigma^2(s) ds\right)^k \right| \F_t \right]  = \sum_{k=0}^{\infty}\frac{\lambda^k}{k!} \E \left[\left.\left(\int_t^T\sigma^2(s) ds\right)^k \right| \F_t \right],\label{11}
\end{equation}  
that is, we need to find the moments of the integrated volatility, $\int_t^T\sigma^2(s) ds$. For $Y(s):= \sigma^2(s)$ modelled by a polynomial process, we notice that the bivariate process $\left(Y(T),\int_t^TY(s)ds\right)$ is also polynomial. Hence the moments of $\int_t^T\sigma^2(s)ds$ can be computed with the moment formula applied to this bivariate polynomial process. As an alternative approach, using iteratively the fundamental theorem of calculus, it can be proved that for every $k\ge1$ the $k$-th power of  $\int_t^T\sigma^2(s) ds$ can be rewritten in terms of a $k$-th order integral, namely
\begin{equation}
	\label{12}
	\left(\int_t^T\sigma^2(s) ds\right)^k
	= \frac{1}{k!} \int_{t}^{T}\int_t^{s_k}\cdots \int_t^{s_2} Y(s_1)Y(s_2)\cdots Y(s_k)ds_1\cdots ds_k,
\end{equation}
where $t< s_1 < s_2 < \dots < s_k< T$ is a partition of $[t,T]$. Combining equations \eqref{11} and \eqref{12}, we get
\begin{equation}
	\label{intcorr2}
	\E \left[\left. e^{\lambda \int_t^TY(s) ds}  \right| \F_t \right] = \sum_{k=0}^{\infty}\frac{\lambda^k}{(k!)^2} \int_{t}^{T}\int_t^{s_k}\cdots \int_t^{s_2} \E \left[\left.Y(s_1)Y(s_2)\cdots Y(s_k)\right| \F_t \right]ds_1\cdots ds_k,
\end{equation}
so that for every $k\ge 1$ we need to study expectations of the form of $\E \left[ \left.Y(s_1)Y(s_2)\cdots Y(s_k) \right| \F_t \right]$.

Interestingly, $Y(T)=\int_0^T\sigma^2(s)ds$ appears also in the pricing of VIX-derivatives, that is, derivatives on the realized variance and volatility. For derivatives paying $\psi(Y(T))$, we can use the Fourier approach above as long as $\psi$ is integrable, with an integrable Fourier transform $\widehat{\psi}$, to end up again with a conditional expectation as in equation \eqref{intcorr2}. The volatility swap price, i.e., the swap price on the realized volatility, is defined as the conditional expected value $\mathbb E[Y^{1/2}(T)\,\vert\,\mathcal F_t]$ for $t\leq T$. Expanding $x\mapsto\sqrt{x}\mathrm{1}_{x\geq 0}$ in the Hermite functions, being a basis for the space $L^2(\mathbb R,\gamma(x)dx)$ with $\gamma$ the standard normal density function, we obtain a series representation of the swap price in terms of conditional moments of $Y(T)$. For more details on VIX-derivatives with numerical examples based on Fourier methods, we refer to \cite{Benth-vix-paper}. Here the Barndorff-Nielsen \& Shephard stochastic volatility model is consider for $\sigma^2(t)$, which is a polynomial jump process as will be defined in the next section.

\section{Polynomial processes}
\label{polprosssec}
Following \cite{filipovic2020}, we consider a jump-diffusion operator on $\R$ of the form
\begin{equation}
	\label{genD}
	\G f(x) = b(x)f'(x) + \frac{1}{2}\sigma^2(x)f''(x)+\int_{\R}\left( f(x+z)-f(x)-f'(x)z \right)\ell(x,dz),
\end{equation} 
for some measurable maps $b:\R \to \R$ and $\sigma:\R \to \R$, and a transition kernel $\ell:\R\times \R \to \R$ such that $\ell(x, \{0\})= 0$ and $\int_{\R}|z|\land |z|^2\ell(x, dz)<\infty$ for all $x\in \R$. We then let $Y$ be the jump-diffusion stochastic process having $\G$ as extended generator. This means that for every bounded function $f:\R \to \R$ with continuous second derivative and $y\in \R$, the process $f(Y(t))-f(y) -\int_0^t \G f(Y(s))ds$ is a $\left(\F_t, \prob_y\right)$-local martingale.

We now denote with $\mathrm{Pol}(\R)$ the algebra of polynomials on $\R$ and with $\mathrm{Pol}_n(\R)$ the subspace of all polynomials of degree less than or equal to $n$ on $\R$. We say that $\G$ is \emph{well defined} on $\mathrm{Pol}(\R)$ if $\int_{\R}|z|^n\ell(x, dz)<\infty$ for all $x\in \R$ and $n\ge 2$, and $\G f(x) = 0$ for $f(x)\equiv 0$ on $\R$. We  then give the following definition of a polynomial jump-diffusion process.
\begin{definition}[Polynomial jump-diffusion process]
	We call the operator $\G$ \emph{polynomial} if it is well defined on $\mathrm{Pol}(\R)$ and it maps $\mathrm{Pol}_n(\R)$ to itself for each $n\in \N$. In this case, we call $Y$ a \emph{polynomial jump-diffusion process}.  
\end{definition}
Assuming $\G$ to be polynomial, from  \cite[Lemma D.4]{filipovic2020}, the process
\begin{equation}
	\label{martingale}
	p(Y(t))-p(y) -\int_0^t \G p(Y(s))ds \qquad \mbox{is a }\left(\F_t, \prob_y\right)\mbox{-martingale}
\end{equation}
for all $p \in  \mathrm{Pol}_n(\R)$, $y \in \R$ and $t\ge 0$. This basically means that all increments of \eqref{martingale} have vanishing expectation. Moreover, from \cite[Lemma 1]{filipovic2020}, the polynomial property of $\G$ can be characterized in terms of its coefficients: it must hold that
\begin{equation}
	\label{poldefcond_FL}
	b\in \mathrm{Pol}_1(\R),\qquad
	\sigma^2 +\int_{\R}z^2\ell(\cdot,dz)\in \mathrm{Pol}_2(\R) \quad \mbox{and}\quad
	\int_{\R}z^m\ell(\cdot,dz)\in \mathrm{Pol}_m(\R)  \mbox{ for all } m\ge 3.
\end{equation}
To fulfil these conditions, we shall assume that for every $m\ge 2$ there exist $b_0, b_1, \sigma_0, \sigma_1, \sigma_2, \xi_0^m, \dots, \xi_m^m$ real constants such that
\begin{equation}
\label{poldefcond}
b(x)=b_0+b_1x,\qquad \sigma^2(x) = \sigma_0 +\sigma_1x+\sigma_2x^2 \quad \mbox{and}\quad \int_{\R}z^m\ell(x,dz)=\sum_{i=0}^m\xi_i^mx^i.
\end{equation}
We consider an example. 
\begin{example}
\label{jumpdiff}
Let $B$ a standard one-dimensional Brownian motion and $\tilde{N}(dt, dz)$ a compensated Poisson random measure  with compensator $\nu(dz)dt$. We consider the jump-diffusion SDE given by
\begin{equation*}
	\label{sdepol}
	dY(t) = b(Y(t))dt + \sigma(Y(t))dB(t)+\int_{\R}\delta(Y(t^-),z)\tilde{N}(dt,dz),
\end{equation*} 
with drift, volatility and jump size functions of the form
\begin{equation*}
	\label{poldefcond2}
	b(x):=b_0+b_1x,\qquad
	\sigma^2(x) := \sigma_0 +\sigma_1x+\sigma_2x^2,\qquad
	\delta(x,z):=\delta_0(z)+\delta_1(z)x,
\end{equation*}
for $b_0, b_1, \sigma_0, \sigma_1, \sigma_2 \in \R$, and $\delta_0, \delta_1:\R \to \R$ such that $\int_{\R}|\delta_i(z)|^m\nu(dz)<\infty$ for all $m\ge 2$ and $i=0,1$. The SDE has a unique strong solution $Y(t)$ for each initial condition $Y(0)=y\in \R$. Moreover, $Y(t)$ is a polynomial jump-diffusion with linear drift $b\in \mathrm{Pol}_1(\R)$, quadratic diffusion $\sigma^2\in \mathrm{Pol}_2(\R)$, and jump measure $\ell(x, dz)$ given by $\int_{\R}f(z)\ell(x,dz)= \int_{\R} f(\delta(x,z))\nu(dz)$. In particular, for $m\ge2$, by the binomial theorem we find that
\begin{equation*}
\int_{\R}\left(\delta_0(z)+\delta_1(z)x\right)^m\nu(dz) = \sum_{i=0}^m\binom{m}{i}\int_{\R}\delta_0(z)^{m-i}\delta_1(z)^i\nu(dx)\;x^{i},
\end{equation*}
so that in this case the constants $\xi_i^m$ introduced in equation \eqref{poldefcond} are $\xi_i^m = \binom{m}{i}\int_{\R}\delta_0(z)^{m-i}\delta_1(z)^i\nu(dx)$, for $i=0, \dots, m.$
\end{example}

\subsection{The generator matrix}
\label{secgen} 
We consider the set $\{1,x,x^2, \dots, x^n\}$ as basis for $\mathrm{Pol}_n(\R)$, and we introduce the vector valued function 
\begin{equation*}
	\label{Hvec}
	H_n:\R \longrightarrow \R^{n+1}, \quad H_n(x) = (1,x,x^2,\dots,x^n)^{\top},
\end{equation*}
with $\top$ the transpose operator, so that every polynomial function $p\in \mathrm{Pol}_n(\R)$ with vector of coordinates $\vec{p} = (p_0,p_1,\dots,p_n)^{\top}\in\R^{n+1}$ can be represented by $p(x) = \vec{p}^{\top}H_n(x) = H_n(x)^{\top}\vec{p}$.

We report now rigorously the moment formula for polynomial processes from \cite[Theorem 2.5]{filipovic2020}, for which we include the proof that will be useful for the analysis in Section \ref{correlators}. 

\begin{theorem}[Moment formula]
	\label{genm}
	For $n\ge1$ and $Y$ polynomial process with extended generator $\G$:
	\begin{enumerate}	
		\item There exists a so-called \emph{generator matrix} $G_n \in \R^{(n+1)\times(n+1)}$ such that
		\begin{equation}
		\label{Gndef}
		\G H_n(x) = G_nH_n(x).
		\end{equation}		
		\item For every $p\in \mathrm{Pol}_n(\R)$ with vector of coefficients $\vec{p}\in\R^{n+1}$, the moment formula holds
		\begin{equation*}
		\E\left[p(Y(T))\left.\right|\F_t\right] =  \vec{p}^{\top}e^{G_n(T-t)}H_n(Y(t)), \quad 0\le t\le T.
		\end{equation*}
	\end{enumerate}
	\begin{proof}
		We take $f(x)=x^k$ for $0 \le k\le n$. Since $Y$ is a polynomial process, there exists $\vec{q}_k\in \R^{n+1}$ such that $\G x^k = \vec{q}_k^{\top}H_n(x)$, $0 \le k\le n$. With these vectors we can then construct a matrix $G_n \in \R^{(n+1)\times(n+1)}$ such that $\G H_n(x) = G_nH_n(x)$. This proves claim 1. 
		Next, by equation \eqref{martingale},  we write that 
		\begin{align}
		\E\left[p(Y(T))\left.\right|\F_t\right] = \vec{p}^{\top}\E[H_n(Y(T))\left.\right|\F_t] 	&= \vec{p}^{\top}H_n(Y(t))+\int_t^T\vec{p}^{\top}\E[\left.\G H_n(Y(s))\right|\F_t]ds \notag\\&= \vec{p}^{\top}H_n(Y(t))+\vec{p}^{\top}G_n\int_t^T\E[\left.H_n(Y(s))\right|\F_t]ds.\label{exp}
		\end{align}
		 We focus on $H_n(Y(T))$. For $Z(s):=\E[H_n(Y(s))\left.\right|\F_t]$, equation \eqref{exp} can be written in differential form as $dZ(s) = G_nZ(s)ds,$ whose solution, by separation of variables, is $Z(T) = e^{G_n(T-t)}Z(t)$. From the definition of $Z$, multiplying by the vector $\vec{p}^{\top}$, we conclude the proof.		
	\end{proof}
\end{theorem}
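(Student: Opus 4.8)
The plan is to establish the two claims separately: claim 1 is pure linear algebra extracted from the polynomial property of $\G$, and claim 2 follows by turning the martingale property \eqref{martingale} into a linear matrix ODE for the conditional expectation of $H_n(Y)$.

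For claim 1 I would use directly the fact that $\G$ maps $\mathrm{Pol}_n(\R)$ into itself. Fixing a monomial $x^k$ with $0\le k\le n$, the image $\G x^k$ is again a polynomial of degree at most $n$, so it has a unique coordinate vector $\vec{q}_k\in\R^{n+1}$ in the basis $H_n(x)$, i.e. $\G x^k=\vec{q}_k^{\top}H_n(x)$. Collecting these $n+1$ coordinate vectors as the rows of one matrix yields $G_n\in\R^{(n+1)\times(n+1)}$, and reading off the construction componentwise gives exactly $\G H_n(x)=G_nH_n(x)$, which is \eqref{Gndef}. The only point to verify is well-definedness, which holds because the monomials are a basis, so each $\vec{q}_k$ is uniquely determined.

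For claim 2 the key step is to apply \eqref{martingale} to each component $x^k$ of $H_n$ and take conditional expectations, using that martingale increments have vanishing conditional expectation. After substituting $\G H_n=G_nH_n$ this produces the closed integral equation
\begin{equation*}
\E\!\left[H_n(Y(s))\,|\,\F_t\right]=H_n(Y(t))+G_n\int_t^{s}\E\!\left[H_n(Y(u))\,|\,\F_t\right]du,\qquad t\le s\le T.
\end{equation*}
Writing $Z(s):=\E[H_n(Y(s))\,|\,\F_t]$, continuity of $s\mapsto Z(s)$ and the fundamental theorem of calculus give that $Z$ is differentiable with $Z'(s)=G_nZ(s)$ and $Z(t)=H_n(Y(t))$; uniqueness for this constant-coefficient linear system yields $Z(s)=e^{G_n(s-t)}H_n(Y(t))$. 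Evaluating at $s=T$ and contracting with $\vec{p}^{\top}$ gives $\E[p(Y(T))\,|\,\F_t]=\vec{p}^{\top}e^{G_n(T-t)}H_n(Y(t))$, as claimed.

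The main obstacle I expect is not the ODE manipulation but the measure-theoretic bookkeeping underlying it: one must know that all the conditional moments $\E[|Y(s)|^k\,|\,\F_t]$ are finite and that the conditional expectation may be interchanged with the time integral (a Fubini-type argument). Both are secured by the standing assumption that $\G$ is well defined on $\mathrm{Pol}(\R)$, in particular by the growth conditions \eqref{poldefcond_FL}, which also underpin the martingale property \eqref{martingale} invoked at the outset. Once integrability is granted, the remaining passage from the integral equation to the matrix exponential is routine.
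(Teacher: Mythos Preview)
Your proposal is correct and follows essentially the same route as the paper's own proof: both build $G_n$ row by row from the coordinate vectors of $\G x^k$ in the monomial basis, then use the martingale property \eqref{martingale} to obtain the integral equation for $Z(s):=\E[H_n(Y(s))\,|\,\F_t]$, differentiate to $Z'=G_nZ$, and solve via the matrix exponential. Your additional remarks on integrability and the Fubini-type interchange are not in the paper's proof but are welcome clarifications rather than a different argument.
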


Theorem \ref{genm} tells us that $\E\left[p(Y(T))\left.\right|\F_t\right]$ is a polynomial function in $Y(t)$ for every $p \in  \mathrm{Pol}_n(\R)$. We point out that this holds for every choice of the vector basis of polynomials, despite in this paper we focus on the vector basis of monomials, $H_n(x)$. Moreover, we stress the fact that the moment formula strongly relies on the existence of the generator matrix $G_n$ and on the martingale property of the process in equation \eqref{martingale}. These two elements will be the key for all our framework.

\begin{example}
	\label{G2ex}
	Let $n=2$. Then $H_2(x) = (1, x, x^2)^{\top}$ and $\G H_2(x) = (\G 1, \G x, \G x^2)^{\top}.$ In particular, from equations \eqref{genD} and \eqref{poldefcond}, we get that $\G 1 = 0$, $\G x = b_0 + b_1 x$ and
	\begin{equation*}
		\small
		\label{G2}
		\G x^2 = \left(\sigma_0+\xi_0^2\right) +  \left(\sigma_1 + 2b_0+\xi_1^2\right) x +\left(\sigma_2 +2b_1+\xi_2^2\right) x^2.
	\end{equation*}
	One finds that the generator matrix $G_2\in \R^{3\times 3}$ satisfying \eqref{Gndef} is then
	\begin{equation*}
		G_2 = \begin{pmatrix}
			0 & 0 & 0\\ 
			b_0  & b_1 & 0 \\
			\sigma_0+\xi_0^2 & \sigma_1 + 2b_0+\xi_1^2 & \sigma_2 +2b_1+\xi_2^2
		\end{pmatrix} .
	\end{equation*}
\end{example}

\section{Two-point correlators}
\label{correlators}
Aiming at solving the $(m+1)$-point correlators problem in equation \eqref{corrn}, we start the analysis for $m=1$ because the tools and ideas developed to solve this case are crucial to understand the framework that will be generalized to $m+1$ polynomials in Section \ref{hcorr}. For $m=1$, equation \eqref{corrn} reads like
\begin{equation*}
C_{p_0,p_1}(s_0,s_1;t):=\E\left[\left.p_1\left(Y(s_0)\right)p_0\left(Y(s_1)\right)\right|\F_t\right], \qquad 0\le t < s_0 < s_1,
\end{equation*}
with $p_0\in \mathrm{Pol}_{n_0}(\R)$ and $p_1\in \mathrm{Pol}_{n_1}(\R)$. In particular, for $n := \max\{n_0, n_1\}$, we can represent the two polynomial functions $p_0$ and $p_1$ respectively by $p_0(x)=\vec{p}_{0}^{\top}H_n(x)$ and $p_1(x)=\vec{p}_{1}^{\top}H_n(x)$. By the tower rule for $\F_t \subseteq \F_{s_0}$ and the moment formula in Theorem \ref{genm}, $C_{p_0,p_1}(s_0,s_1;t)$ can be rewritten by
\begin{equation}
\label{corr2}
C_{p_0,p_1}(s_0,s_1;t)= \vec{p}_{1}^{\top} \E\left[\left.H_{n}(Y(s_0))H_{n}(Y(s_0))^{\top}\right|\F_t\right]e^{G_{n}^{\top}(s_1-s_0)}\vec{p}_{0}.
\end{equation}
This means that the conditional expectation of the product of two polynomial functions reduces to the conditional expectation of the outer product of the basis function $H_n(x)$ with itself, which  is a matrix of monomial functions of the form 
\begin{equation}
	\label{Xn2}
	X_n(x):=\NiceMatrixOptions{transparent}
	H_{n}(x)H_{n}(x)^{\top}=\begin{pmatrix}
		1 & x & x^2 & \cdots & x^n\\
		x & x^2 & x^3 & \cdots & x^{n+1}\\
		x^2 & x^3 & x^4 & \cdots & x^{n+2}\\
		\vdots &\vdots &\vdots & \ddots & \vdots \\
		x^n & x^{n+1} & x^{n+2} & \cdots & x^{2n}
	\end{pmatrix}.
\end{equation} 
By equation \eqref{martingale} we notice that
\begin{equation}
	\label{ode}
	\E\left[\left.X_n(Y(s_0))\right|\F_t\right] = X_n(Y(t))+ \int_t^{s_0} \E\left[\left.\G X_n(Y(s))\right|\F_t\right] ds.
\end{equation} 
Thus, in the same spirit of the proof of Theorem \ref{genm}, we seek a linear operator such that
\begin{equation}
	\label{lo}
	G_{n}^{(1)}:\R^{(n+1)\times(n+1)}\longrightarrow \R^{(n+1)\times(n+1)}, \qquad \G X_n(x) = G_{n}^{(1)}X_n(x),
\end{equation}
which is the equivalent linear operator in the two-polynomial setting to the generator matrix $G_n$. However, $G_{n}^{(1)}$ cannot be represented with a matrix. We notice that $G_n$ maps a vector to a vector, while $G_n^{(1)}$ maps a matrix to a matrix. The idea is then to transform the matrix-matrix problem into a vector-vector problem and to construct the linear operator $G_n^{(1)}$ in terms of the generator matrix $G_n$. We start by introducing the following operators for a general matrix $A \in \R^{n\times m}$.
\begin{definition}[Vectorization and inverse-vectorization]
	\label{vecD}
	Given a matrix $A \in \R^{n\times m}$ whose $j$-th column we denote by $A_{:j}$, we define $vec: \R^{n\times m} \to \R^{nm}$ as the operator that associates to $A$ the $nm$-column vector
	\begin{equation*}
		\NiceMatrixOptions{transparent}
		vec(A) = \begin{pmatrix}
			A_{:1}^{\top} & A_{:2}^{\top}&\cdots & A_{:m}^{\top}
		\end{pmatrix}^{\top},
	\end{equation*}
	which is called the \emph{vectorization} of $A$. For $v=vec(A)$, we then define $vec^{-1}: \R^{nm} \to \R^{n\times  m}$ as the operator that associates to the vector $v$ the $n\times m$ matrix
	$B = vec^{-1}(v)$, such that $[B]_{i,j} = v_{n(j-1)+i}$, for $i=1, \dots, n$ and $j=1, \dots, m$. In this case, we say that $B$ is the \emph{inverse-vectorization} of $v$. In particular, $B$ and $A$ coincide.
\end{definition}

We then address the problem of finding the linear operator $G_{n}^{(1)}$ transforming $X_n(x)$ into $\G X_n(x)$, to the problem of finding a matrix $\tilde{G}_n^{(1)} \in \R^{(n+1)^2\times (n+1)^2}$ such that
\begin{equation}
	\label{Mn}
	\G vec(X_n(x)) =  \tilde{G}_n^{(1)} vec(X_n(x)),
\end{equation}
where $\G vec(X_n(x)) =  vec(\G X_n(x))$. The operator $G_{n}^{(1)}$ satisfying equation  \eqref{lo} is then obtained by composing the matrix $\tilde{G}_n^{(1)}$ with the $vec$ and $vec^{-1}$ operators, namely
\begin{equation}
\label{Gn2}
G_{n}^{(1)} = vec^{-1} \circ \tilde{G}_{n}^{(1)} \circ vec.
\end{equation}
We consider an example. 

\begin{example}
		\label{M1}
		Let $n=1$. We seek $\tilde{G}_{1}^{(1)} \in \R^{4\times 4}$ such that $\left(\G 1, \G x , \G x , \G x^2\right)^{\top} = \tilde{G}_{1}^{(1)} \left(1, x , x , x^2\right)^{\top}$. 
		Two suitable choices of $\tilde{G}_{1}^{(1)} $ are
		\begin{align*}
			\footnotesize
			&\tilde{G}_{1}^{(1)} = \begin{pmatrix}
				0 & 0 & 0 & 0 \\
				b_0 & b_1 & 0 & 0 \\
				b_0 & b_1 & 0 & 0 \\
				\sigma_0+\xi_0^2 & \sigma_1 + 2b_0+\xi_1^2 & 0 & \sigma_2 +2b_1+\xi_2^2
			\end{pmatrix} \quad \mbox{\normalsize and}\\
			&\tilde{G}_{1}^{(1)} = \begin{pmatrix}
				0 & 0 & 0 & 0 \\
				b_0 & b_1/2 & b_1/2  & 0 \\
				b_0 & b_1/2 & b_1/2  & 0 \\
				\sigma_0+\xi_0^2 & \left(\sigma_1 + 2b_0+\xi_1^2\right)/2 & \left(\sigma_1 + 2b_0+\xi_1^2\right)/2 & \sigma_2 +2b_1+\xi_2^2
			\end{pmatrix}.
		\end{align*}
\end{example}

We notice from Example \ref{M1} that the first $\tilde{G}_{1}^{(1)}$ has two identical rows and a null column, while the second $\tilde{G}_{1}^{(1)}$ has both two identical rows and two identical columns. This is due to the double presence of the term $\G x$ in $vec(\G X_1(x))$, or, analogously, the double presence of the term $x$ in $vec(X_1(x))$. Increasing the value of $n$, the number of redundant terms in $vec(\G X_n(x))$ and $vec(X_n(x))$ increases, hence to find a recursion for the matrix $\tilde{G}_{n}^{(1)}$ seems not an easy task. Moreover, we would like to write the matrix $\tilde{G}_{n}^{(1)}$ in terms of the generator matrix $G_n$. We shall solve this issue in the next section.

\subsection{The L-vectorization}
\label{LL}
Looking at equation \eqref{Xn2}, we notice that a possible way, among others, to get from the matrix $X_n(x)$ all the elements without repetition (that is equivalent to get all the powers of $x$ from $0$ to $2n$ without repetition) is to select the first column and the last row. For this, we introduce the following operator.

\begin{definition}[L-vectorization]
\label{vecL}
Given a matrix $A \in \R^{n\times m}$ with elements $[A]_{i,j} = a_{i,j}$, $1\le i \le n$ and $1 \le j \le m$, we define the \emph{L-vectorization} of $A$ as the operator $vecL: \R^{n\times m} \to \R^{n+m-1}$ that associates to $A$ the $(n+m-1)$-column vector obtained by selecting the first column and the last row of $A$, namely
\begin{equation*}
\NiceMatrixOptions{transparent}
vecL(A) = \begin{pmatrix}
	a_{1,1}&a_{2,1}&\cdots&a_{n,1}&a_{n,2}&\cdots&a_{n,m}
\end{pmatrix}^{\top}.
\end{equation*}
\end{definition}
Intuitively, the $vecL$ operator is a linear operator selecting from the matrix $A$ the elements that together form the biggest "L" inscribed in the matrix $A$.  In \cite{magnus1980}, the authors introduce the half-vectorization operator, which, starting from a matrix $A$, returns the column vector obtained by stacking together the columns of the lower-triangular matrix contained in $A$. Moreover, they provide two matrices, the eliminating matrix and the duplicating matrix, that, respectively, transform the vectorization of $A$ into the half-vectorization, and vice-versa. We aim at the same kind of results for the L-vectorization. The existence of such matrices tells us that there exist a linear transformation to remove the duplicates from $vec(X_n(x))$ (what we call the \emph{L-eliminating matrix}) and the corresponding inverse linear transformation (the \emph{L-duplicating matrix}).

From now on, we shall denote with $\vec{e}_{k,j}$ the $j$-th canonical basis vector in $\R^k$, with $I_k$ the identity matrix in $\R^{k\times k}$, and with $\otimes$ the Kronecker product, for which we recall the definition.
\begin{definition}[Kronecker product]
	\label{kronD}
	The \emph{Kronecker product} of a matrix $A \in \R^{n\times m}$ with elements $[A]_{i,j} = a_{i,j}$, $1\le i \le n$ and $1 \le j \le m$, and a matrix $B \in \R^{r\times s}$, is the matrix $A \otimes B \in \R^{nr \times ms}$ given by
	\begin{equation*}
		\NiceMatrixOptions{transparent}
		A \otimes B = \begin{pmatrix}
			a_{1,1}B& \cdots &a_{1,m}B\\
			\vdots & \ddots& \vdots \\
			a_{n,1}B & \cdots & a_{n,m}B
		\end{pmatrix}.
	\end{equation*}
\end{definition}

We define now the L-eliminating matrix.

\begin{theorem}[L-eliminating matrix]
	\label{Enmth}
	For every $n,m\ge 1$ and for every matrix $A \in \R^{n\times m}$, there exists an L-eliminating matrix $E_{n,m}\in \R^{(n+m-1)\times nm}$ such that
	\begin{align}
		\label{Enm2} 
		&E_{n,m}vec(A) = vecL(A)\\
		&E_{n,m} = \sum_{i=1}^n \vec{e}_{n+m-1,i}\otimes \vec{e}_{m,1}^{\top}\otimes \vec{e}_{n,i}^{\top}+\sum_{i=2}^m \vec{e}_{n+m-1,n+i-1}\otimes\vec{e}_{m,i}^{\top}\otimes \vec{e}_{n,n}^{\top}.\label{Enmex2}
	\end{align}
\end{theorem}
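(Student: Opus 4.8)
The plan is to prove both claims simultaneously by verifying that the explicit matrix in \eqref{Enmex2} satisfies the defining property \eqref{Enm2} for an arbitrary $A\in\R^{n\times m}$; since the formula exhibits such a matrix concretely, the existence assertion follows immediately and needs no separate argument.

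The single tool I would rely on is the elementary vectorization identity
\begin{equation*}
	\left(\vec{e}_{m,j}^{\top}\otimes \vec{e}_{n,i}^{\top}\right)vec(A) = a_{i,j}, \qquad 1\le i\le n,\ 1\le j\le m,
\end{equation*}
which follows from $\left(u^{\top}\otimes v^{\top}\right)vec(A) = v^{\top}A\,u$ evaluated at the canonical basis vectors, or directly from Definition \ref{vecD}: the entry $a_{i,j}$ sits in position $n(j-1)+i$ of $vec(A)$, and the row vector $\vec{e}_{m,j}^{\top}\otimes \vec{e}_{n,i}^{\top}$ has its unique nonzero entry precisely there.

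Next I would observe that for a column vector $\vec{e}_{n+m-1,k}$ and a row vector $w^{\top}$ the Kronecker product $\vec{e}_{n+m-1,k}\otimes w^{\top}$ is just the outer product $\vec{e}_{n+m-1,k}\,w^{\top}$. Applying $E_{n,m}$ to $vec(A)$ and using this, each summand collapses to a scalar multiple of a canonical basis vector:
\begin{equation*}
	E_{n,m}vec(A) = \sum_{i=1}^n \left(\vec{e}_{m,1}^{\top}\otimes \vec{e}_{n,i}^{\top}\right)vec(A)\,\vec{e}_{n+m-1,i}+\sum_{i=2}^m \left(\vec{e}_{m,i}^{\top}\otimes \vec{e}_{n,n}^{\top}\right)vec(A)\,\vec{e}_{n+m-1,n+i-1}.
\end{equation*}
By the identity above the first inner product equals $a_{i,1}$ and the second equals $a_{n,i}$, so the right-hand side becomes $\sum_{i=1}^n a_{i,1}\vec{e}_{n+m-1,i}+\sum_{i=2}^m a_{n,i}\vec{e}_{n+m-1,n+i-1}$. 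This is exactly the vector whose first $n$ entries are the first column $a_{1,1},\dots,a_{n,1}$ of $A$ and whose remaining $m-1$ entries are $a_{n,2},\dots,a_{n,m}$, that is, $vecL(A)$ as in Definition \ref{vecL}.

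The only delicate point is the index bookkeeping: one must confirm that the two families of output positions $\{1,\dots,n\}$ and $\{n+1,\dots,n+m-1\}$ are disjoint and together exhaust $\{1,\dots,n+m-1\}$, and that starting the second sum at $i=2$ correctly avoids double-counting the corner entry $a_{n,1}$, which already appears as the last term of the first sum. I expect no genuine obstacle beyond this careful matching of the Kronecker-product positions with the positions produced by $vec$; the argument is otherwise a direct computation.
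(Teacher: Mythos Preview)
Your proposal is correct and follows essentially the same approach as the paper: both verify that the explicit matrix in \eqref{Enmex2} satisfies \eqref{Enm2} by reducing to the identity $(\vec{e}_{m,j}^{\top}\otimes \vec{e}_{n,i}^{\top})\,vec(A)=a_{i,j}$. The paper derives this identity via the trace formula $tr(A^{\top}B)=(vec\,A)^{\top}vec\,B$ and then reads off $E_{n,m}$ from the expansion of $vecL(A)$, whereas you invoke the identity directly and compute $E_{n,m}\,vec(A)$ forward; the two computations are the same verification run in opposite directions.
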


\begin{corollary}
	\label{cor11}
	For every $n\ge1$, the L-eliminating matrix $E_{n+1} \in \R^{(2n+1)\times(n+1)^2}$ transforming $vec(X_n(x))$ into $vecL(X_n(x))$ is given by $E_{n+1} := E_{n+1,n+1}$.
\end{corollary}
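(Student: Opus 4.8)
The plan is to recognize this corollary as a direct specialization of Theorem \ref{Enmth} to the square matrix $X_n(x) = H_n(x)H_n(x)^\top \in \R^{(n+1)\times(n+1)}$. First I would set both dimension parameters in Theorem \ref{Enmth} equal to $n+1$; since $X_n(x)$ has $n+1$ rows and $n+1$ columns, the general construction applies verbatim and produces a matrix $E_{n+1,n+1}$ satisfying $E_{n+1,n+1}\,vec(X_n(x)) = vecL(X_n(x))$. The dimension bookkeeping is then immediate: substituting the row and column counts $n\mapsto n+1$ and $m\mapsto n+1$ into $\R^{(n+m-1)\times nm}$ gives $\R^{(2(n+1)-1)\times(n+1)^2} = \R^{(2n+1)\times(n+1)^2}$, which matches the stated size. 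Defining $E_{n+1}:=E_{n+1,n+1}$ is merely fixing notation for the rest of the paper.

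The one point worth verifying explicitly is that the L-vectorization of $X_n(x)$ is a genuinely non-redundant listing of the monomials. I would read off $[X_n(x)]_{i,j} = x^{(i-1)+(j-1)}$ from \eqref{Xn2}, which exhibits the Hankel structure (entries depend only on $i+j$). Selecting the first column $j=1$ yields $x^0, x^1,\dots, x^n$ as $i$ runs from $1$ to $n+1$, while selecting the last row $i=n+1$ yields $x^n, x^{n+1},\dots, x^{2n}$ as $j$ runs from $1$ to $n+1$. Following Definition \ref{vecL}, concatenating the first column with the remaining entries of the last row (for $j=2,\dots,n+1$, so that the shared corner $x^n$ is not double-counted) produces $(1,x,\dots,x^n,x^{n+1},\dots,x^{2n})^\top$, whence $vecL(X_n(x)) = H_{2n}(x)$ with each power of $x$ appearing exactly once. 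This confirms that the ``L'' captures precisely the $2n+1$ distinct monomials and pins down the role of $E_{n+1}$ as the map removing the repeated powers from $vec(X_n(x))$.

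Since all the substantive work is already contained in Theorem \ref{Enmth}, I do not anticipate a genuine obstacle. The only mild care needed is notational: Theorem \ref{Enmth} uses $n,m$ for the dimensions of a generic matrix $A$, whereas here $n$ denotes the polynomial degree, so the square matrix $X_n(x)$ has side $n+1$ rather than $n$. Keeping this indexing shift straight is exactly what justifies writing $E_{n+1}$ (a single subscript $n+1$) for the operator acting on $vec(X_n(x))$, and it is the only place where a careless reading could introduce an off-by-one error.
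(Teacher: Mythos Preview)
Your proposal is correct and matches the paper's approach: the corollary is an immediate specialization of Theorem~\ref{Enmth} with both dimension parameters set to $n+1$, and the paper gives no separate proof beyond this. Your additional verification that $vecL(X_n(x)) = H_{2n}(x)$ is not strictly required here---it is the content of Lemma~\ref{id}, stated later---but it does no harm and correctly anticipates why the construction is useful.
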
 

\begin{example}
	\label{E1ex2}
	Let $n=m=2$. Then equation \eqref{Enmex2} becomes
	\begin{equation*}
		E_{2,2} = \sum_{i=1}^{2} \vec{e}_{3,i}\otimes \vec{e}_{2,1}^{\top}\otimes \vec{e}_{2,i}^{\top}+\sum_{i=2}^{2} \vec{e}_{3,2+i}\otimes\vec{e}_{2,i}^{\top}\otimes \vec{e}_{2,2}^{\top}=\begin{pmatrix}
			1& 0 & 0 & 0\\
			0&1& 0 & 0 \\
			0& 0 & 0 & 1
		\end{pmatrix}.
	\end{equation*}
	For $A\in \R^{2\times 2}$ of the form $A = \begin{pmatrix}
		a_1 & a_3 \\
		a_2 & a_4
	\end{pmatrix}$, 
	we can verify that $E_{2,2}$ satisfies the definition of an L-eliminating matrix in equation \eqref{Enm2}, as indeed
	\begin{equation*}
		E_{2,2} vec(A) = 
		\begin{pmatrix}
			1& 0 & 0 & 0\\
			0&1& 0 & 0 \\
			0& 0 & 0 & 1
		\end{pmatrix}
		\begin{pmatrix}
			a_1 \\
			a_2 \\
			a_3\\
			a_4
		\end{pmatrix} 
		= \begin{pmatrix}
			a_1 \\
			a_2 \\
			a_4
		\end{pmatrix} = vecL(A).
	\end{equation*}
	Moreover, when applied to $vec(X_1(x))=(1, x,x, x^2)^{\top}$ it eliminates the double value $x$.
\end{example}

Next, we want to define an inverse operator to $E_{n,m}$, namely a linear mapping transforming the L-vectorization of a matrix $A$ into its vectorization. However, this inverse operation is not well defined in the space of matrices in $\R^{n\times m}$. Indeed, when applying $E_{n,m}$ to $vec(A)$, we go from a space of dimension $nm$ to a space of dimension $n+m-1 < nm$. Then the inverse transformation in general does not exist. Thus, it is necessary to find a suitable subspace of $\R^{n\times m}$ of dimension of  $n+m-1$, so that image space dimension and domain space dimension coincide. In \cite{magnus1980}, the authors face a similar issue which they solve by restricting the domain to the space of symmetric matrices. 

Looking at the matrix of functions $X_n(x)$, we notice that each ascending skew-diagonal from left to right is constant, which is a property of the so-called \emph{Hankel matrices}. This class of matrices is usually defined in the square case; we however consider an extended definition to rectangular matrices as introduced, for example, in \cite{fiedler1985}.
\begin{definition}[Hankel matrix]
	\label{spaceA}
	We define $\mathcal{A}_{n,m}\subset \R^{n\times m}$ as the space of matrices whose elements on the same skew-diagonal coincide. We distinguish two cases corresponding to whether $n\ge m$ or $m\ge n$, so that a matrix $A \in\mathcal{A}_{n,m}$ takes one the following two forms:
	\begin{equation*}
		\footnotesize
		\NiceMatrixOptions{transparent}
		\label{Anm2}
		A = \begin{pmatrix}
			a_1       & 	a_2 &    \cdots      	 & a_m \\
			a_2       &  		  &\iddots   &  \vdots     \\
			\vdots  &\iddots&               &  \vdots     \\
			a_m     &             &              & a_n \\
			\vdots &            &\iddots &  \vdots     \\
			\vdots &\iddots&              & \vdots      \\
			a_n     & \cdots&  \cdots& a_{n+m-1}
		\end{pmatrix}\; \quad 
		\mbox{\normalsize or } \quad \; A = \begin{pmatrix}
			a_1       & 	a_2 &    \cdots  & a_n   & \cdots & \cdots & a_m \\
			a_2       &  		  &\iddots      &        &            & \iddots& \vdots \\
			\vdots  &\iddots&                 &        & \iddots&           &\vdots \\
			a_n     & \cdots & \cdots      & a_m  &  \cdots & \cdots &  a_{n+m-1} \\
		\end{pmatrix},
	\end{equation*}
	for $a_1, \dots, a_{n+m-1}\in \R$. We call $A \in \mathcal{A}_{n,m}$ an  \emph{Hankel matrix} and write $\mathcal{A}_{n} := \mathcal{A}_{n,n}$ for $n=m$.
\end{definition}

We see that $X_n(x)\in\mathcal{A}_{n+1}$, and we can now define the inverse operator of $E_{n,m}$ on $\mathcal{A}_{n,m}$.
\begin{theorem}[L-duplicating matrix]
	\label{Dnth}
	For every $n,m\ge1$ and for every matrix $A \in \mathcal{A}_{n,m}$, there exists an L-duplicating matrix $D_{n,m}\in\R^{nm\times (n+m-1)}$ such that
	\begin{align}
		&\label{Dn2} 
		D_{n,m}vecL(A) = vec(A)\\
		&\label{Dnex2}
		D_{n,m} = \sum_{i=1}^{n}\sum_{j=1}^{m} \vec{e}_{n+m-1,i+j-1}^{\top}\otimes\vec{e}_{m,j}\otimes \vec{e}_{n,i}.
	\end{align}
\end{theorem}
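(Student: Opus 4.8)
The plan is to establish the identity \eqref{Dn2} directly by an index computation and then read off \eqref{Dnex2} as the matrix representation of the resulting linear map in the canonical bases; the Hankel constraint defining $\mathcal{A}_{n,m}$ (Definition \ref{spaceA}) is exactly what makes the reconstruction well posed. First I would parametrize any $A\in\mathcal{A}_{n,m}$ by its $n+m-1$ skew-diagonal entries $a_1,\dots,a_{n+m-1}$, so that $[A]_{i,j}=a_{i+j-1}$ for $1\le i\le n$ and $1\le j\le m$. With this encoding the L-vectorization of Definition \ref{vecL} is transparent: the first column contributes $a_{1,1},\dots,a_{n,1}=a_1,\dots,a_n$ and the remaining last-row entries contribute $a_{n,2},\dots,a_{n,m}=a_{n+1},\dots,a_{n+m-1}$, so the $k$-th component of $vecL(A)$ is exactly $a_k$ for every $k=1,\dots,n+m-1$. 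On the other side, by Definition \ref{vecD} the entry of $vec(A)$ in position $n(j-1)+i$ equals $[A]_{i,j}=a_{i+j-1}$.

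Consequently \eqref{Dn2} reduces to a pure selection rule: place the skew-diagonal entry $a_{i+j-1}$ into position $n(j-1)+i$. I would therefore take $D_{n,m}$ to be the $nm\times(n+m-1)$ matrix whose row $n(j-1)+i$ is the canonical row vector $\vec{e}_{n+m-1,i+j-1}^{\top}$, and verify componentwise that $D_{n,m}vecL(A)$ carries $a_{i+j-1}$ into slot $n(j-1)+i$, i.e. equals $vec(A)$, for every Hankel $A$. Equivalently, letting $\phi:\R^{n+m-1}\to\mathcal{A}_{n,m}$ be the linear isomorphism with $\phi(a)_{i,j}=a_{i+j-1}$, one has $vecL\circ\phi=\mathrm{id}$ and $D_{n,m}=vec\circ\phi$, which makes \eqref{Dn2} immediate and records in passing that $D_{n,m}$ is the unique matrix with this property on $\mathcal{A}_{n,m}$, since $\phi$ inverts $vecL$ there.

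It then remains to check that this selection matrix coincides with the Kronecker expression \eqref{Dnex2}. Here I would invoke two standard identities for the Kronecker product of Definition \ref{kronD}: first $\vec{e}_{m,j}\otimes\vec{e}_{n,i}=\vec{e}_{nm,\,n(j-1)+i}$, which matches the column-stacking convention of $vec$; and second, for a canonical row vector $u^{\top}$ and a canonical column vector $v$, the product $u^{\top}\otimes v$ is the rank-one matrix with a single $1$ at the row indexed by $v$ and the column indexed by $u$. Applying these to each summand of \eqref{Dnex2} shows $\vec{e}_{n+m-1,i+j-1}^{\top}\otimes\vec{e}_{m,j}\otimes\vec{e}_{n,i}$ is the matrix with a single $1$ in row $n(j-1)+i$ and column $i+j-1$, so that summing over $i=1,\dots,n$ and $j=1,\dots,m$ reproduces exactly the selection matrix above.

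I expect the statement to be routine rather than deep, with the only genuine subtleties being bookkeeping ones. The point requiring real care is that \eqref{Dn2} holds \emph{precisely} because $A$ is Hankel: only then does every entry $[A]_{i,j}$ agree with the skew-diagonal representative $a_{i+j-1}$ stored in $vecL(A)$, so that reconstruction from the L-vectorization is lossless; for a general matrix the same $D_{n,m}$ would overwrite the off-L entries with their diagonal representatives and fail to return $vec(A)$. The second place to be careful is aligning the two index conventions — the column-stacking order $n(j-1)+i$ of $vec$ and the skew-diagonal order $i+j-1$ — so that the identity $\vec{e}_{m,j}\otimes\vec{e}_{n,i}=\vec{e}_{nm,\,n(j-1)+i}$ is used with the factors in the correct order, mirroring the verification already carried out for the L-eliminating matrix in Theorem \ref{Enmth}.
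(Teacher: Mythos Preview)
Your proposal is correct and follows essentially the same approach as the paper: parametrize $A\in\mathcal{A}_{n,m}$ by its skew-diagonal entries $a_k$, identify $[vecL(A)]_k=a_k$ and $[vec(A)]_{n(j-1)+i}=a_{i+j-1}$, and then observe that the selection matrix implementing this map is precisely the Kronecker sum \eqref{Dnex2} via $\vec{e}_{m,j}\otimes\vec{e}_{n,i}=\vec{e}_{nm,\,n(j-1)+i}$. The only additions you make beyond the paper are the uniqueness remark through the isomorphism $\phi$ and the explicit cautionary note that the Hankel hypothesis is essential, both of which are fine elaborations rather than departures.
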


\begin{corollary}
	\label{cor22}
	For every $n\ge1$, the L-duplicating matrix $D_{n+1} \in \R^{(n+1)^2\times (2n+1)}$ transforming $vecL(X_n(x))$ into $vec(X_n(x))$ is given by $D_{n+1}:=D_{n+1,n+1}$. 
\end{corollary}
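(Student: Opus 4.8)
The plan is to read this corollary as the direct specialization of Theorem \ref{Dnth} to the square Hankel matrix $X_n(x)$ with both dimension parameters set equal to $n+1$. First I would record the one structural fact that makes the specialization legitimate: the entry of $X_n(x)=H_n(x)H_n(x)^{\top}$ in position $(i,j)$ is $x^{i-1}\cdot x^{j-1}=x^{i+j-2}$, which depends only on the sum $i+j$. Hence $X_n(x)$ is constant along each ascending skew-diagonal, so $X_n(x)\in\mathcal{A}_{n+1}$ in the notation of Definition \ref{spaceA} (the square case $\mathcal{A}_{n+1,n+1}$, already flagged in the text immediately preceding the statement). This membership is what places $X_n(x)$ inside the restricted domain on which $D_{n,m}$ is a genuine inverse of $E_{n,m}$.

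Next I would invoke Theorem \ref{Dnth} under the substitution $n\mapsto n+1$ and $m\mapsto n+1$. The theorem then produces a matrix $D_{n+1,n+1}$ of row dimension $nm=(n+1)^2$ and column dimension $n+m-1=(n+1)+(n+1)-1=2n+1$, i.e.\ $D_{n+1,n+1}\in\R^{(n+1)^2\times(2n+1)}$, which matches the dimension asserted in the statement; concretely this is the matrix obtained by evaluating the explicit Kronecker-product formula \eqref{Dnex2} at these parameters. Theorem \ref{Dnth} moreover guarantees the duplicating identity $D_{n+1,n+1}\,vecL(A)=vec(A)$ for every $A\in\mathcal{A}_{n+1}$. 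Since $X_n(x)\in\mathcal{A}_{n+1}$ from the first step, I would specialize this identity to $A=X_n(x)$ to obtain $D_{n+1,n+1}\,vecL(X_n(x))=vec(X_n(x))$, and then set $D_{n+1}:=D_{n+1,n+1}$, which is exactly the claimed transformation from $vecL(X_n(x))$ to $vec(X_n(x))$.

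There is no substantive obstacle here: all of the analytic content — the closed form for $D_{n,m}$ and the verification via \eqref{Dn2} that it reverses the L-vectorization on $\mathcal{A}_{n,m}$ — has already been discharged in Theorem \ref{Dnth}, so the corollary reduces to dimension bookkeeping plus the Hankel observation of the first paragraph. The only point deserving a moment's care is confirming that $X_n(x)$ genuinely lies in $\mathcal{A}_{n+1}$, since outside this subspace no inverse of $E_{n+1,n+1}$ exists; this is immediate from $[X_n(x)]_{i,j}=x^{i+j-2}$. The argument is the exact mirror of the one for Corollary \ref{cor11}, with the eliminating matrix $E_{n+1,n+1}$ and identity \eqref{Enm2} replaced by the duplicating matrix $D_{n+1,n+1}$ and identity \eqref{Dn2}.
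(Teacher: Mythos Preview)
Your proposal is correct and matches the paper's intent: the corollary is stated as an immediate specialization of Theorem~\ref{Dnth} to the square case $n\mapsto n+1$, $m\mapsto n+1$, using the observation (made just before Definition~\ref{spaceA}) that $X_n(x)\in\mathcal{A}_{n+1}$. The paper offers no separate proof, and your dimension check plus the Hankel verification $[X_n(x)]_{i,j}=x^{i+j-2}$ is exactly the bookkeeping needed.
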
 

\begin{example}
	\label{D1ex2}
	Let $n=m=2$. Then equation \eqref{Dnex2} becomes
	\begin{equation*}
		D_{2,2} = \sum_{i=1}^{2}\sum_{j=1}^{2} \vec{e}_{3,i+j-1}^{\top}\otimes\vec{e}_{2,j}\otimes \vec{e}_{2,i}=
		\begin{pmatrix}
			1& 0 &  0\\
			0&1 & 0 \\
			0&1 & 0 \\
			0& 0 & 1
		\end{pmatrix}.
	\end{equation*}
	For $A\in \mathcal{A}_{2}$ of the form $A=\begin{pmatrix}
		a_1 & a_2\\ a_2& a_3\end{pmatrix}$, we can verify that
	\begin{equation*}
		D_{2,2}  vecL(A) = 
		\begin{pmatrix}
			1& 0 &  0\\
			0&1 & 0 \\
			0&1 & 0 \\
			0& 0 & 1
		\end{pmatrix}
		\begin{pmatrix}
			a_1 \\
			a_2 \\
			a_3
		\end{pmatrix} 
		= \begin{pmatrix}
			a_1 \\
			a_2 \\
			a_2\\
			a_3
		\end{pmatrix} = vec(A).
	\end{equation*}
	Moreover, when applied to $vecL(X_1(x))=(1, x,x^2)^{\top}$, it duplicates the missing value $x$.
\end{example}

We conclude this section with an important property for the matrices $E_{n,m}$ and $D_{n,m}$.
\begin{proposition}
	\label{EDp}
	For every $n,m\ge 1$, $D_{n,m}$ is the right-inverse of $E_{n,m}$ and for every $A\in \mathcal{A}_{n,m}$, the product $D_{n,m} E_{n,m}\in\R^{nm\times nm}$ acts on $vec(A)$ like an identity operator, $D_{n,m}E_{n,m} vec(A)=vec(A)$.
\end{proposition}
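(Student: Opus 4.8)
The plan is to derive both claims directly from the defining relations of Theorems~\ref{Enmth} and~\ref{Dnth} --- namely $E_{n,m}\,vec(A)=vecL(A)$ for every $A\in\R^{n\times m}$ and $D_{n,m}\,vecL(A)=vec(A)$ for every $A\in\mathcal{A}_{n,m}$ --- rather than by multiplying the explicit Kronecker expressions \eqref{Enmex2} and \eqref{Dnex2} entrywise.

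The identity-on-image property $D_{n,m}E_{n,m}\,vec(A)=vec(A)$ is the shorter of the two. For $A\in\mathcal{A}_{n,m}$, Theorem~\ref{Enmth} gives $E_{n,m}\,vec(A)=vecL(A)$, and because $A$ is Hankel, Theorem~\ref{Dnth} applies to give $D_{n,m}\,vecL(A)=vec(A)$. Chaining the two yields $D_{n,m}E_{n,m}\,vec(A)=D_{n,m}\,vecL(A)=vec(A)$, as claimed.

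For the right-inverse property $E_{n,m}D_{n,m}=I_{n+m-1}$, the crux is the observation that $vecL$ restricts to a bijection from $\mathcal{A}_{n,m}$ onto $\R^{n+m-1}$. A Hankel matrix in $\mathcal{A}_{n,m}$ is determined by its $n+m-1$ skew-diagonal values $a_1,\dots,a_{n+m-1}$, and reading off its first column and then the remainder of its last row returns precisely $(a_1,\dots,a_{n+m-1})^{\top}$; hence every $v\in\R^{n+m-1}$ equals $vecL(A_v)$ for a unique $A_v\in\mathcal{A}_{n,m}$. I would then fix an arbitrary $v$, write it as $vecL(A_v)$, and chain the defining relations in the opposite order: $D_{n,m}v=D_{n,m}\,vecL(A_v)=vec(A_v)$ by Theorem~\ref{Dnth}, followed by $E_{n,m}\,vec(A_v)=vecL(A_v)=v$ by Theorem~\ref{Enmth}. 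This gives $E_{n,m}D_{n,m}v=v$ for every $v$, hence $E_{n,m}D_{n,m}=I_{n+m-1}$.

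The argument uses no estimates; the only delicate point is the bijection claim, where one must verify that the first-column block and the last-row block of $vecL$ overlap only in the shared corner entry $a_{n,1}$, so that together they list each skew-diagonal value exactly once. As a purely algebraic alternative, one could compute $E_{n,m}D_{n,m}$ straight from \eqref{Enmex2} and \eqref{Dnex2} via the mixed-product rule $(A\otimes B)(C\otimes D)=(AC)\otimes(BD)$ together with $\vec{e}_{k,i}^{\top}\vec{e}_{k,j}=\delta_{ij}$; I expect the obstacle in that route to be the index bookkeeping needed to show the surviving terms collapse to $\sum_{i=1}^{n+m-1}\vec{e}_{n+m-1,i}\vec{e}_{n+m-1,i}^{\top}=I_{n+m-1}$, which the conceptual bijection argument sidesteps.
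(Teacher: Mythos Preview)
Your proof is correct. For the identity-on-image claim $D_{n,m}E_{n,m}\,vec(A)=vec(A)$ on $\mathcal{A}_{n,m}$, your argument is exactly the paper's: chain the defining relations \eqref{Enm2} and \eqref{Dn2}.

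For the right-inverse claim $E_{n,m}D_{n,m}=I_{n+m-1}$, you take a genuinely different route. The paper multiplies the explicit Kronecker expressions \eqref{Enmex2} and \eqref{Dnex2} directly, reducing the product to a sum of rank-one matrices $\vec{e}_{n+m-1,k}\vec{e}_{n+m-1,k}^{\top}$ via the orthogonality $\vec{e}_{k,i}^{\top}\vec{e}_{k,j}=\delta_{ij}$ and careful index tracking --- precisely the bookkeeping you anticipated in your closing remark. Your argument instead exploits the surjectivity of $vecL:\mathcal{A}_{n,m}\to\R^{n+m-1}$ to test $E_{n,m}D_{n,m}$ on every vector without ever touching the explicit formulas. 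Your approach is shorter and more conceptual; the paper's direct computation has the minor advantage of not requiring one to articulate why $vecL$ is a bijection on Hankel matrices, but that fact is immediate from Definition~\ref{spaceA} and Definition~\ref{vecL}, as you note.
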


\begin{example}
\label{counter}
Let  $n=m=2$. From Example \ref{E1ex2} and \ref{D1ex2} we get
\begin{equation*}
D_{2,2}E_{2,2} =\begin{pmatrix}
	1& 0 &  0\\
	0&1 & 0 \\
	0&1 & 0 \\
	0& 0 & 1
\end{pmatrix}
\begin{pmatrix}
	1& 0 & 0 & 0\\
	0&1& 0 & 0 \\
	0& 0 & 0 & 1
\end{pmatrix} = 
\begin{pmatrix}
	1& 0 &  0& 0 \\
	0&1 & 0 & 0\\
	0&1 & 0 & 0\\
	0& 0 & 0& 1
\end{pmatrix} \ne I_4.
\end{equation*}
However, for $A\in \mathcal{A}_{2}$ with $vec(A)=(a_1, a_2, a_2, a_3)^{\top}$,  we notice that $D_{2,2}E_{2,2}vec(A)=vec(A)$, hence the product $D_{2,2}E_{2,2}$ behaves like an identity operator, despite not coinciding with the identity matrix.
\end{example}

\subsection{The generator for correlators}
We focus now on the original problem: by equation \eqref{Gn2} we seek a linear operator $\tilde{G}_n^{(1)}$ transforming $vec(X_n(x))$ into $\G vec(X_n(x))$. From equation \eqref{Xn2}, we notice that the elements of $X_n(x)$ lying on the left-bottom "L" coincide with the powers of $x$ from $0$ to $2n$. 

\begin{lemma}
	\label{id}
	For every $n\ge 1$, the L-vectorization of $X_n(x)$ coincides with the vectors basis of monomials of order $2n$, namely $vecL(X_n(x))= H_{2n}(x).$
\end{lemma}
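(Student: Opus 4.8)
The plan is to unwind the definitions of $vecL$ and $X_n(x)$ and verify that the resulting vector is exactly $H_{2n}(x)=(1,x,x^2,\dots,x^{2n})^{\top}$. Recall from Corollary~\ref{cor11} that $X_n(x)\in\mathcal{A}_{n+1}$ is a square Hankel matrix of size $(n+1)\times(n+1)$, so Definition~\ref{vecL} applies with $n$ and $m$ both replaced by $n+1$; the output vector $vecL(X_n(x))$ then lives in $\R^{(n+1)+(n+1)-1}=\R^{2n+1}$, which is precisely the dimension of $H_{2n}(x)$. This dimension check is the first sanity step and already matches.

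Next I would identify the entries explicitly. Writing $[X_n(x)]_{i,j}=x^{(i-1)+(j-1)}$ for $1\le i,j\le n+1$, the $vecL$ operator selects the first column ($j=1$) followed by the last row excluding its first entry ($i=n+1$, $j=2,\dots,n+1$). The first column produces the entries $[X_n(x)]_{i,1}=x^{i-1}$ for $i=1,\dots,n+1$, giving the powers $x^0,x^1,\dots,x^n$ in order. The last row produces $[X_n(x)]_{n+1,j}=x^{n+(j-1)}$ for $j=2,\dots,n+1$, giving the powers $x^{n+1},x^{n+2},\dots,x^{2n}$ in order. Concatenating, the full $vecL$ vector reads $(x^0,x^1,\dots,x^n,x^{n+1},\dots,x^{2n})^{\top}$, which is exactly $H_{2n}(x)$.

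There is essentially no difficult step here; the statement is a direct bookkeeping computation, and the only thing to watch is the indexing convention so that the corner entry $x^n$ (shared by the first column and the last row) is counted once, not twice. This is guaranteed by Definition~\ref{vecL}, whose last-row segment starts at $a_{n,2}$ rather than $a_{n,1}$, thereby excluding the already-selected corner. Hence the concatenation lists each power of $x$ from $0$ to $2n$ exactly once and in increasing order, establishing $vecL(X_n(x))=H_{2n}(x)$.
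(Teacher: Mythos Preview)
Your proof is correct and proceeds by the only natural route: direct verification of the entries using the explicit form $[X_n(x)]_{i,j}=x^{i+j-2}$ together with Definition~\ref{vecL}. The paper itself does not supply a separate proof of this lemma, treating it as immediate from the definitions; your argument makes that verification explicit. One small citation quibble: the fact that $X_n(x)\in\mathcal{A}_{n+1}$ is not the content of Corollary~\ref{cor11} (which concerns the L-eliminating matrix) but rather follows from the display~\eqref{Xn2} and is recorded in equation~\eqref{Xn1}; the mathematics is unaffected.
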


Hence, by transforming $vec(X_n(x))$ into $vecL(X_n(x))$, we address the problem of finding the generator matrix for $vec(X_n(x))$ to the problem of finding the generator matrix for $H_{2n}(x)$, which was solved in Section \ref{secgen}. We can then prove the following result.

\begin{proposition}
	\label{Mnex}
	For every $t\ge 0$ and $n\ge 1$, the matrix $\tilde{G}_n^{(1)}$ satisfying equation \eqref{Mn} and its matrix exponential are respectively given by
	\begin{equation*}
		\tilde{G}_n^{(1)} = D_{n+1} G_{2n} E_{n+1} \qquad \mbox{and} \qquad 	e^{\tilde{G}_n^{(1)} t} =  D_{n+1} e^{G_{2n}t} E_{n+1} ,
	\end{equation*}
	where $G_{2n}$ is the generator matrix of order $2n$.
\end{proposition}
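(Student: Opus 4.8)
The plan is to verify the two formulas in turn, drawing on Lemma \ref{id} (which gives $E_{n+1}\,vec(X_n(x)) = vecL(X_n(x)) = H_{2n}(x)$), Corollary \ref{cor22} (which gives $D_{n+1}H_{2n}(x) = vec(X_n(x))$), and Proposition \ref{EDp} (which gives $E_{n+1}D_{n+1} = I_{2n+1}$, together with the fact that $D_{n+1}E_{n+1}$ fixes every vectorized Hankel matrix). For the first identity I would simply check that $D_{n+1}G_{2n}E_{n+1}$ satisfies the defining equation \eqref{Mn}. Applying it to $vec(X_n(x))$ and using $E_{n+1}\,vec(X_n(x)) = H_{2n}(x)$ and the generator-matrix relation $G_{2n}H_{2n}(x) = \G H_{2n}(x)$ yields $D_{n+1}G_{2n}E_{n+1}\,vec(X_n(x)) = D_{n+1}\,\G H_{2n}(x)$. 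The key structural point is that $\G$ acts entrywise with $[\G X_n(x)]_{i,j} = \G x^{i+j-2}$ depending only on the skew-diagonal index $i+j$, so $\G X_n(x)$ is again Hankel, $\G X_n(x)\in\mathcal{A}_{n+1}$. Since selecting entries commutes with applying $\G$ entrywise, $\G H_{2n}(x) = \G\,vecL(X_n(x)) = vecL(\G X_n(x))$, and Corollary \ref{cor22} applied to the Hankel matrix $\G X_n(x)$ gives $D_{n+1}\,vecL(\G X_n(x)) = vec(\G X_n(x)) = \G\,vec(X_n(x))$. Hence $D_{n+1}G_{2n}E_{n+1}$ solves \eqref{Mn}, establishing claim one.

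For the exponential I would pass through the power series. Writing $G := G_{2n}$, $E := E_{n+1}$, $D := D_{n+1}$ and using $ED = I_{2n+1}$ from Proposition \ref{EDp}, a one-line induction collapses the internal products, $(DGE)^k = DG(ED)G\cdots(ED)GE = DG^kE$ for every $k\ge 1$. Summing the exponential series then gives $e^{\tilde{G}_n^{(1)}t} = I_{(n+1)^2} + \sum_{k\ge 1}\frac{t^k}{k!}\,DG^kE = I_{(n+1)^2} + D\big(e^{Gt}-I_{2n+1}\big)E = D e^{Gt}E + \big(I_{(n+1)^2} - DE\big)$.

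The main obstacle is precisely this residual constant term $I_{(n+1)^2} - DE$: it is the single $k=0$ mismatch, where $(\tilde{G}_n^{(1)})^0 = I_{(n+1)^2}$ but $DG^0E = DE\neq I_{(n+1)^2}$ (see Example \ref{counter}). The way I would resolve it — and the sense in which the stated equality is to be read — is to recall that \eqref{Mn} determines $\tilde{G}_n^{(1)}$ only on the subspace $\mathrm{Im}(D_{n+1}) = \{vec(A): A\in\mathcal{A}_{n+1}\}$ of vectorized Hankel matrices, and that on this subspace the discrepancy is invisible: by Proposition \ref{EDp}, $\big(I_{(n+1)^2}-DE\big)D = D - D(ED) = D - D = 0$, so $I_{(n+1)^2}-DE$ annihilates $\mathrm{Im}(D_{n+1})$. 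Consequently $e^{\tilde{G}_n^{(1)}t}$ and $D e^{Gt}E$ agree on every $vec(X_n(x))$ — equivalently, as linear maps on $\mathrm{Im}(D_{n+1})$ — which is exactly the form in which the exponential is used in the conditional-expectation computation \eqref{corr2}. I would present the same conclusion more transparently through the matrix ODE $\Phi'(t) = \tilde{G}_n^{(1)}\Phi(t)$: both $e^{\tilde{G}_n^{(1)}t}$ and $De^{Gt}E$ solve it (the latter because $\tilde{G}_n^{(1)}De^{Gt}E = DG(ED)e^{Gt}E = DGe^{Gt}E = \frac{d}{dt}De^{Gt}E$), the subspace $\mathrm{Im}(D_{n+1})$ is $\tilde{G}_n^{(1)}$-invariant since $\tilde{G}_n^{(1)}Dw = DG(ED)w = DGw$, and although the initial values $I_{(n+1)^2}$ and $DE$ differ they coincide once restricted to $\mathrm{Im}(D_{n+1})$, where both act as the identity by Proposition \ref{EDp}; uniqueness of solutions then propagates the agreement to all $t\ge 0$.
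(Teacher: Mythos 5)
Your proposal is correct, and for the first identity it follows essentially the paper's own route: the paper combines $\G H_{2n}(x)=G_{2n}H_{2n}(x)$ with Lemma \ref{id} to get $vecL(\G X_n(x))=G_{2n}\,vecL(X_n(x))$, multiplies by $D_{n+1}$, and invokes Proposition \ref{EDp} to replace $D_{n+1}E_{n+1}\,vec(\G X_n(x))$ by $vec(\G X_n(x))$ --- the same ingredients you use, merely assembled in a different order. One small merit of your write-up here is that you make explicit the hypothesis that this last step silently requires, namely that $\G X_n(x)$ is itself a Hankel matrix (since $[\G X_n(x)]_{i,j}=\G x^{i+j-2}$ depends only on $i+j$, so $\G X_n(x)\in\mathcal{A}_{n+1}$); the paper uses this implicitly when citing Proposition \ref{EDp}. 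Where you genuinely diverge --- and in fact repair a gap in the paper's own argument --- is the exponential. The paper's proof asserts $\left(D_{n+1}G_{2n}E_{n+1}\right)^k=D_{n+1}G_{2n}^{\,k}E_{n+1}$ for all $k\ge 0$ and sums the series; as you observe, this fails at $k=0$, because $(D_{n+1}G_{2n}E_{n+1})^0=I_{(n+1)^2}$ while $D_{n+1}E_{n+1}\ne I_{(n+1)^2}$ (the paper's own Example \ref{counter}), so the literal matrix identity $e^{\tilde{G}_n^{(1)}t}=D_{n+1}e^{G_{2n}t}E_{n+1}$ holds only up to the residual $I_{(n+1)^2}-D_{n+1}E_{n+1}$. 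Your resolution is the right one: equation \eqref{Mn} pins down $\tilde{G}_n^{(1)}$ only on $\mathrm{Im}(D_{n+1})$ (non-uniqueness already visible in Example \ref{M1}), the residual annihilates $\mathrm{Im}(D_{n+1})$ since $\left(I_{(n+1)^2}-D_{n+1}E_{n+1}\right)D_{n+1}=0$, and hence $e^{\tilde{G}_n^{(1)}t}$ and $D_{n+1}e^{G_{2n}t}E_{n+1}$ agree on every $vec(X_n(x))$ --- which is precisely the only way the exponential enters \eqref{corr2}, Theorem \ref{odesol2}, and later Theorem \ref{theoremformula}. Your parallel ODE argument (both expressions solve $\Phi'=\tilde{G}_n^{(1)}\Phi$ on the $\tilde{G}_n^{(1)}$-invariant subspace $\mathrm{Im}(D_{n+1})$ with initial values agreeing there) reaches the same conclusion independently. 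In short: the paper's version buys brevity at the cost of a false-as-stated step at $k=0$; yours buys an honest account of that term and of the precise subspace sense in which the proposition holds and is applied.
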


We are now able to provide a solution to the two-point correlator problem.
\begin{theorem}[Two-point correlator formula]
	\label{odesol2}
	The expression for the two-point correlator is given by
	\begin{equation*}
		C_{p_0,p_1}(s_0,s_1;t)= \vec{p}_{1}^{\top} \left\{vec^{-1} \circ D_{n+1} e^{G_{2n} (s_0-t)} E_{n+1}\circ vec \left(X_{n}(Y(t))\right)\right\}e^{G_{n}^{\top}(s_1-s_0)}\vec{p}_{0},
	\end{equation*}
with $\vec{p}_{0}, \vec{p}_{1} \in \R^{n+1}$ the vectors of coefficients for the polynomial functions $p_0\in \mathrm{Pol}_{n_0}(\R)$ and $p_1\in \mathrm{Pol}_{n_1}(\R)$, $n=\max\{n_0, n_1\}$ and $t< s_0 < s_1$. 
\end{theorem}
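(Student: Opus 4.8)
The plan is to reduce everything to the vectorized linear ODE that the framework has already prepared, following the same strategy as the proof of Theorem~\ref{genm}. Starting from the representation \eqref{corr2}, the only nontrivial object to evaluate is the conditional expectation $\E\left[\left.X_n(Y(s_0))\right|\F_t\right]$ of the Hankel matrix of monomials; once this is in hand, the asserted formula follows by direct substitution into \eqref{corr2}.

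First I would apply the $vec$ operator to the integral identity \eqref{ode}. Since $vec$ merely reorders the scalar entries and therefore commutes with the conditional expectation, and since each entry of $X_n$ is a monomial in $\mathrm{Pol}_{2n}(\R)$ to which the martingale property \eqref{martingale} applies componentwise, I obtain
\begin{equation*}
	\E\left[\left.vec(X_n(Y(s_0)))\right|\F_t\right] = vec(X_n(Y(t)))+ \int_t^{s_0} \E\left[\left.vec(\G X_n(Y(s)))\right|\F_t\right] ds.
\end{equation*}
Using the defining relation \eqref{Mn}, namely $vec(\G X_n(x))=\G vec(X_n(x))=\tilde{G}_n^{(1)}vec(X_n(x))$, and pulling the constant matrix $\tilde{G}_n^{(1)}$ out of both the expectation and the integral, I set $W(s):=\E\left[\left.vec(X_n(Y(s)))\right|\F_t\right]$ and arrive at $W(s_0)=W(t)+\tilde{G}_n^{(1)}\int_t^{s_0}W(s)\,ds$. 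In differential form this is the linear system $dW(s)=\tilde{G}_n^{(1)}W(s)\,ds$, whose solution, by separation of variables, is $W(s_0)=e^{\tilde{G}_n^{(1)}(s_0-t)}W(t)$, exactly as in \eqref{exp}.

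Next I would invoke Proposition~\ref{Mnex} to replace the exponential of the abstract vectorized generator by its explicit factorization $e^{\tilde{G}_n^{(1)}(s_0-t)}=D_{n+1}e^{G_{2n}(s_0-t)}E_{n+1}$. Applying $vec^{-1}$ to both sides of $W(s_0)=e^{\tilde{G}_n^{(1)}(s_0-t)}vec(X_n(Y(t)))$ and recalling $vec^{-1}(W(s_0))=\E\left[\left.X_n(Y(s_0))\right|\F_t\right]$ then yields
\begin{equation*}
	\E\left[\left.X_n(Y(s_0))\right|\F_t\right]=vec^{-1}\circ D_{n+1} e^{G_{2n} (s_0-t)} E_{n+1}\circ vec \left(X_{n}(Y(t))\right),
\end{equation*}
and substituting this into \eqref{corr2} gives precisely the claimed expression.

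The computation is essentially routine because the heavy lifting has already been carried out: the existence and the explicit form of $\tilde{G}_n^{(1)}$ together with its matrix exponential are granted by Proposition~\ref{Mnex}, while the scalar moment argument reproduces that of Theorem~\ref{genm}. The one point requiring care -- the main obstacle, such as it is -- is justifying the interchange of $vec$, the conditional expectation, and the time integral, and in particular the extraction of $\tilde{G}_n^{(1)}$ from inside the integral as a constant matrix; this rests on the componentwise validity of the martingale property \eqref{martingale} for each entry of $X_n$, each lying in $\mathrm{Pol}_{2n}(\R)$, together with the integrability ensured by the polynomial assumptions \eqref{poldefcond}.
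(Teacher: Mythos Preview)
Your proof is correct and follows essentially the same route as the paper: apply $vec$ to \eqref{ode}, use \eqref{Mn} and Proposition~\ref{Mnex} to linearize, solve the resulting ODE as in Theorem~\ref{genm}, and substitute back into \eqref{corr2}. The only cosmetic difference is that the paper substitutes $\tilde{G}_n^{(1)}=D_{n+1}G_{2n}E_{n+1}$ before solving the ODE, whereas you solve with the abstract $\tilde{G}_n^{(1)}$ and then invoke the exponential factorization from Proposition~\ref{Mnex}; both orderings are valid.
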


\section{Higher-order correlators}
\label{hcorr}
We prove in this section a correlator formula holding for every $m\ge 1$ by following similar steps to the ones performed in Section \ref{correlators} for $m=1$. We recall that we seek an explicit expression for
\begin{equation*}
	C_{p_0,\dots, p_m}(s_0,\dots, s_m;t):= \E\left[p_m\left(Y(s_0)\right)p_{m-1}\left(Y(s_1)\right)\cdot \dots \cdot p_0\left(Y(s_m)\right)\left.\right|\F_t\right],
\end{equation*}
with $p_k\in \mathrm{Pol}_{n_k}(\R)$, $k=0,\dots,m$, and $t < s_0 < s_1 < \dots < s_m$. We start with the following operator.
\begin{definition}[d-Kronecker product]
	\label{mkrondef2}
	We define the \emph{d-Kronecker product} of a matrix $A \in \R^{n\times m}$ and a matrix $B \in \R^{r\times s}$, as the $d$-th Kronecker power of $A$ multiplied in the Kronecker sense with $B$, for $d\ge1$, or equal to $B$, for $d=0$, namely
	\begin{equation*}
		\label{mkron}
		\begin{cases}
			A\otimes ^{d} B = A^{\otimes d} \otimes B & d\ge 1\\
			A\otimes ^{0} B = B & d = 0\\
		\end{cases}.
	\end{equation*}
\end{definition}
\noindent Then for $n\ge 1$ and $r\ge 0$, we introduce the matrix of functions
\begin{equation}
	\label{Xnm}
	X_n^{(r)}(x):=H_n(x)^{\top} \otimes^rH_n(x),
\end{equation}
for which we can make the following considerations:
\begin{itemize}[leftmargin=*]
	\item for $r=0$: we get $X_n^{(0)}(x) = H_n(x)$;
	\item for $r=1$: we get 
	\begin{equation}
		\label{Xn1}
		X_n^{(1)}(x) = H_n(x)^{\top} \otimes H_n(x)= H_n(x)H_n(x)^{\top} = X_n(x) \in \mathcal{A}_{n+1};
	\end{equation}
	\item for $r=2$: by the associativity property of the Kronecker product
	\begin{equation*}
		X_n^{(2)}(x) =   H_n(x)^{\top} \otimes X_n^{(1)}(x)=
		\left(\begin{NiceArray}{c:c:c:c:c}
			X_n^{(1)}(x) & x X_n^{(1)}(x) & x^2 X_n^{(1)}(x) & \cdots\cdots & x^n X_n^{(1)}(x)
		\end{NiceArray}\right)
	\end{equation*}
	is composed by $n+1$ blocks of the form $B_{n,2}^{(k)}=x^{k-1} X_n^{(1)}(x)\in \mathcal{A}_{n+1}$, $k=1,\dots,(n+1)$;
	\item for $r=3$: we write $X_n^{(3)}(x) =  \left(H_n(x)^{\top}\right)^{\otimes 2} \otimes X_n^{(1)}(x)$, where
	\begin{gather*}
		\left(H_n(x)^{\top}\right)^{\otimes 2}=\left(\begin{NiceArray}{cccc:cccc:c:cccc}
			1& x & \Cdots & x^n & x & x^2 & \Cdots & x^{n+1}& \cdots & x^n & x^{n+1} &  \Cdots & x^{2n}
		\end{NiceArray}\right) \in \R^{(n+1)^2}
	\end{gather*}
	so that $X_n^{(3)}(x)$ is composed by $(n+1)^2$ blocks. For each block $B_{n,3}^{(k)}$ there exists $j_k \in \{0,\dots, 2n\}$ such that $B_{n,3}^{(k)}=x^{j_k} X_n^{(1)}(x)$ and $B_{n,3}^{(k)} \in \mathcal{A}_{n+1}$, $k=1, \dots, (n+1)^2$. The difference from the previous case is that, now, some of the blocks are repeated since in $\left(H_n(x)^{\top}\right)^{\otimes 2}$ some powers are repeated.
\end{itemize}

Generalizing, we can state the following result.

\begin{proposition}
	\label{shapeX}
	For every $n,r\ge 1$, $X_n^{(r)}(x)$  is a rectangular block matrix in $\R^{(n+1)\times(n+1)^r}$, which is composed by $(n+1)^{r-1}$ blocks $B_{n,r}^{(k)}(x) \in \mathcal{A}_{n+1}$, for which there exists an index $j_k \in \{0, \dots, (r-1)n\}$ such that $B_{n,r}^{(k)}(x)= x^{j_k}X_n^{(1)}(x)$, $k=1,\dots,(n+1)^{r-1}$.
\end{proposition}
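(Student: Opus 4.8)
The plan is to reduce the claim to a single application of the associativity of the Kronecker product, after which the block decomposition, the dimension, and the index bound can all be read off by inspection, rather than arguing entry by entry. First I would regroup the $r$ copies of $H_n(x)^{\top}$ in the definition \eqref{Xnm} so as to pair the last one with $H_n(x)$: using the associativity of $\otimes$ together with Definition \ref{mkrondef2}, one rewrites
\begin{equation*}
	X_n^{(r)}(x) = \left(H_n(x)^{\top}\right)^{\otimes r}\otimes H_n(x) = \left(H_n(x)^{\top}\right)^{\otimes (r-1)}\otimes \left(H_n(x)^{\top}\otimes H_n(x)\right) = \left(H_n(x)^{\top}\right)^{\otimes (r-1)}\otimes X_n^{(1)}(x),
\end{equation*}
where the last equality is \eqref{Xn1}. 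This identity is the crux of the argument; everything else follows from it.

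Next I would use that $\left(H_n(x)^{\top}\right)^{\otimes(r-1)}$ is a $1\times(n+1)^{r-1}$ row vector, so that by Definition \ref{kronD} its Kronecker product with the $(n+1)\times(n+1)$ matrix $X_n^{(1)}(x)$ is precisely the block row obtained by scaling $X_n^{(1)}(x)$ by each of the entries of that row vector in turn. This yields exactly $(n+1)^{r-1}$ blocks and the announced dimension $(n+1)\times(n+1)^r$. Moreover, each entry of $\left(H_n(x)^{\top}\right)^{\otimes(r-1)}$ is, by the definition of the Kronecker power, a product of $r-1$ factors, one drawn from each copy of $H_n(x)^{\top}=(1,x,\dots,x^n)$; hence it is a monomial $x^{j_k}$ with $j_k=a_1+\dots+a_{r-1}$ and each $a_i\in\{0,\dots,n\}$, which places $j_k$ in $\{0,\dots,(r-1)n\}$. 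Consequently the $k$-th block is $B_{n,r}^{(k)}(x)=x^{j_k}X_n^{(1)}(x)$ with the index range required by the statement.

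It then remains to verify the Hankel membership $B_{n,r}^{(k)}(x)\in\mathcal{A}_{n+1}$. Here I would note that $X_n^{(1)}(x)\in\mathcal{A}_{n+1}$ by \eqref{Xn1}, and that multiplying a Hankel matrix by the scalar $x^{j_k}$ preserves the defining property of being constant along each skew-diagonal; hence each scaled block stays in $\mathcal{A}_{n+1}$. I do not expect a genuine obstacle in this argument: the only points needing care are the associativity regrouping in the first display and the bookkeeping of the exponents $j_k$, while the dimension count and the closure of $\mathcal{A}_{n+1}$ under monomial scaling are immediate. An entirely equivalent route is a short induction on $r$ based on the recursion $X_n^{(r)}(x)=H_n(x)^{\top}\otimes X_n^{(r-1)}(x)$, which matches the cases $r=1,2,3$ spelled out before the statement; I would nevertheless favour the direct computation above, since it delivers the block count, the block shape, the exponent range, and the dimension all at once.
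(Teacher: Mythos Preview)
Your argument is correct. The paper proves the proposition by induction on $r$, using the recursion $X_n^{(r+1)}(x)=H_n(x)^{\top}\otimes X_n^{(r)}(x)$: the base case $r=1$ is equation \eqref{Xn1}, and in the step one multiplies each block $x^{j_k}X_n^{(1)}(x)$ of $X_n^{(r)}(x)$ by the entries $x^{\alpha}$, $\alpha=0,\dots,n$, of $H_n(x)^{\top}$, obtaining $(n+1)^r$ blocks of the form $x^{j_k+\alpha}X_n^{(1)}(x)$ with $j_k+\alpha\in\{0,\dots,rn\}$. This is precisely the alternative route you mention in your last paragraph.

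Your direct approach pulls off all $r-1$ outer factors at once via the factorisation $X_n^{(r)}(x)=\left(H_n(x)^{\top}\right)^{\otimes(r-1)}\otimes X_n^{(1)}(x)$, and then reads the block count, the exponent range, and the Hankel membership simultaneously from the entries of the row vector $\left(H_n(x)^{\top}\right)^{\otimes(r-1)}$. The paper in fact uses this very factorisation later, in the proof of Lemma \ref{expjk}, so your regrouping is fully consistent with the rest of the article. The two arguments are equivalent in content; yours is shorter and avoids the induction bookkeeping, while the paper's version mirrors the case analysis $r=1,2,3$ displayed before the statement and makes the transition $j_k\mapsto j_k+\alpha$ explicit, which is convenient for the cardinality count in Lemma \ref{shapeX2}. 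The only cosmetic point to watch is the case $r=1$, where $\left(H_n(x)^{\top}\right)^{\otimes 0}$ must be interpreted as the scalar $1$; this is the standard convention and is compatible with Definition \ref{mkrondef2}.
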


One can also notice that $X_n^{(r)}(x)$ contains all the powers of $x$ from $0$ to $(r+1)n$. Thus, after removing the redundant powers, we are left with $H_{n(r+1)}(x)$. However, from Proposition \ref{shapeX}, $X_n^{(r)}(x)$ is a block matrix whose blocks belong to $\mathcal{A}_{n+1}$, but the matrix itself does not belong to $\mathcal{A}_{n+1,(n+1)^{r}}$. 
\begin{example}
	\label{noA}
	Let $n=r=2$. Then we get the following matrix
	\begin{equation*}
		X_2^{(2)}(x) =
		\left(\begin{array}{ccc:ccc:ccc}
			1 & x &x^2& x & x^2&x^3&x^2&x^3 & x^4 \\
			x & x^2 &x^3& x^2 & x^3&x^4&x^3&x^4 & x^5 \\
			x^2 & x^3 &x^4& x^3 & x^4&x^5&x^4&x^5 & x^6 \\
		\end{array}\right)
	\end{equation*}
	whose blocks belong to $\mathcal{A}_{3}$, but $X_2^{(2)}(x)\notin\mathcal{A}_{3,9}$ hence it is not a Hankel matrix.
\end{example}

This means, in particular, that we cannot use the L-eliminating and L-duplicating matrices as defined in Section \ref{correlators}. We need instead two new tailor-made operators.

\begin{proposition}
	\label{propEp}
	For $n, m\ge 1$, there exists an \emph{$m$-th L-eliminating matrix} $E_{n+1}^{(m)}\in \R^{(n(m+1)+1)\times(n+1)^{m+1}}$ such that $E_{n+1}^{(m)}vec(X_n^{(m)}(x)) = H_{n(m+1)}(x)$. In particular, $E_{n+1}^{(m)}$ is given by the recursion formula
	\begin{equation*}
		\label{recE}
		\begin{cases}
			E_{n+1}^{(1)} = E_{n+1}& m=1\\
			E_{n+1}^{(m)} = E_{nm+1, n+1}\left(I_{n+1} \otimes E_{n+1}^{(m-1)}\right) & m\ge 2
		\end{cases}.
	\end{equation*}
\end{proposition}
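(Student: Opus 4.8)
The plan is to prove the statement by induction on $m$, with the recursion formula itself supplying the inductive construction. The base case $m=1$ is immediate: by Corollary~\ref{cor11} the matrix $E_{n+1} = E_{n+1,n+1}$ transforms $vec(X_n^{(1)}(x)) = vec(X_n(x))$ into $vecL(X_n(x))$, and by Lemma~\ref{id} this equals $H_{2n}(x) = H_{n(1+1)}(x)$. The dimensions match as well, since $E_{n+1}\in\R^{(2n+1)\times(n+1)^2}$ and $n(1+1)+1 = 2n+1$.

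For the inductive step, the first thing I would establish is a clean recursion at the level of vectorizations. Writing $X_n^{(m)}(x) = H_n(x)^{\top}\otimes X_n^{(m-1)}(x)$ by associativity of the d-Kronecker product, this is a block row matrix whose blocks are $x^k X_n^{(m-1)}(x)$, $k=0,\dots,n$. Since $vec$ stacks columns, and hence stacks the vectorizations of the successive blocks, one obtains the key identity
\begin{equation*}
	vec\bigl(X_n^{(m)}(x)\bigr) = H_n(x)\otimes vec\bigl(X_n^{(m-1)}(x)\bigr).
\end{equation*}

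Armed with this, I would compute $E_{n+1}^{(m)}vec(X_n^{(m)}(x))$ using the recursion $E_{n+1}^{(m)} = E_{nm+1,n+1}\bigl(I_{n+1}\otimes E_{n+1}^{(m-1)}\bigr)$. The mixed-product property $(A\otimes B)(C\otimes D) = (AC)\otimes(BD)$ gives
\begin{equation*}
	\bigl(I_{n+1}\otimes E_{n+1}^{(m-1)}\bigr)\bigl(H_n(x)\otimes vec(X_n^{(m-1)}(x))\bigr) = H_n(x)\otimes\bigl(E_{n+1}^{(m-1)}vec(X_n^{(m-1)}(x))\bigr) = H_n(x)\otimes H_{nm}(x),
\end{equation*}
where the last equality is the inductive hypothesis applied at level $m-1$ (recall $n((m-1)+1)+1 = nm+1$). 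The final step is to recognize that $H_n(x)\otimes H_{nm}(x) = vec\bigl(H_{nm}(x)H_n(x)^{\top}\bigr)$, and that $H_{nm}(x)H_n(x)^{\top}$ is a Hankel matrix in $\mathcal{A}_{nm+1,n+1}$ whose entries run over all powers $x^0,\dots,x^{n(m+1)}$. Applying the ordinary L-eliminating matrix $E_{nm+1,n+1}$ from Theorem~\ref{Enmth} selects its first column and last row, i.e.\ returns $vecL(H_{nm}(x)H_n(x)^{\top}) = H_{n(m+1)}(x)$, exactly as claimed.

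I expect the main subtlety to lie not in any single computation but in keeping the bookkeeping consistent: specifically, verifying that the middle factor feeding into $E_{nm+1,n+1}$ is genuinely the vectorization of an $(nm+1)\times(n+1)$ Hankel matrix (so that Theorem~\ref{Enmth} and the definition of $vecL$ apply verbatim), and checking throughout that the dimensions compose correctly — $E_{nm+1,n+1}\in\R^{(n(m+1)+1)\times(nm+1)(n+1)}$ and $I_{n+1}\otimes E_{n+1}^{(m-1)}\in\R^{(n+1)(nm+1)\times(n+1)^{m+1}}$ multiply to give the asserted shape $\R^{(n(m+1)+1)\times(n+1)^{m+1}}$. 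Once the vectorization identity and the Hankel identification are in place, everything else reduces to the mixed-product property and the already-established properties of the base L-eliminating matrices.
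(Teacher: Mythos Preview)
Your proposal is correct and follows essentially the same route as the paper: induction on $m$, the vectorization identity $vec(X_n^{(m)}(x)) = H_n(x)\otimes vec(X_n^{(m-1)}(x))$ (which the paper records as Lemma~\ref{lemma} in the form $vec(X_n^{(m)}(x)) = H_n(x)^{\otimes(m+1)}$), the mixed-product property to pull out $I_{n+1}\otimes E_{n+1}^{(m-1)}$, and then the identification of $H_n(x)\otimes H_{nm}(x)$ with the vectorization of a rectangular Hankel matrix on which $E_{nm+1,n+1}$ acts. The only cosmetic difference is that the paper packages your last step as a separate auxiliary result (Lemma~\ref{lemmb}), whereas you work it out inline via $H_n(x)\otimes H_{nm}(x) = vec(H_{nm}(x)H_n(x)^{\top})$ and $vecL(H_{nm}(x)H_n(x)^{\top}) = H_{n(m+1)}(x)$.
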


\begin{proposition}
	\label{propDp}
	For $n,m\ge 1$, there exists an \emph{$m$-th L-duplicating matrix} $D_{n+1}^{(m)}\in \R^{(n+1)^{m+1}\times (n(m+1)+1)}$ such that $D_{n+1}^{(m)} H_{n(m+1)}(x)=vec(X_n^{(m)}(x))$. In particular, $D_{n+1}^{(m)}$ is given by the recursion formula
	\begin{equation*}
		\label{recD}
		\begin{cases}
			D_{n+1}^{(1)} = D_{n+1}& m=1\\
			D_{n+1}^{(m)} = \left(I_{n+1} \otimes D_{n+1}^{(m-1)}\right)D_{nm+1, n+1} & m\ge 2
		\end{cases}.
	\end{equation*}
\end{proposition}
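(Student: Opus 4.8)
The plan is to prove this by induction on $m$, exactly mirroring the argument for the $m$-th L-eliminating matrix in Proposition~\ref{propEp}, and to rest everything on the single vectorization identity
\begin{equation*}
	vec\bigl(X_n^{(m)}(x)\bigr) = H_n(x) \otimes vec\bigl(X_n^{(m-1)}(x)\bigr),
\end{equation*}
which I would establish first. It follows from the associativity of the Kronecker product: by \eqref{Xnm} and Definition~\ref{mkrondef2} one has $X_n^{(m)}(x) = H_n(x)^{\top} \otimes X_n^{(m-1)}(x)$, and expanding $H_n(x)^{\top} = (1, x, \dots, x^n)$ displays $X_n^{(m)}(x)$ as the block row $\bigl(X_n^{(m-1)}(x),\, x X_n^{(m-1)}(x),\, \dots,\, x^n X_n^{(m-1)}(x)\bigr)$, consistent with Proposition~\ref{shapeX}. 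Stacking its columns in the order of Definition~\ref{vecD} then yields precisely $H_n(x)\otimes vec(X_n^{(m-1)}(x))$.

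For the base case $m=1$, the assertion $D_{n+1}^{(1)} H_{2n}(x) = vec(X_n^{(1)}(x))$ is Corollary~\ref{cor22} combined with Lemma~\ref{id}, since $X_n^{(1)}(x) = X_n(x)$ by \eqref{Xn1} and $n(m+1) = 2n$. For the inductive step I would assume $D_{n+1}^{(m-1)} H_{nm}(x) = vec(X_n^{(m-1)}(x))$ and compute $D_{n+1}^{(m)} H_{n(m+1)}(x) = (I_{n+1}\otimes D_{n+1}^{(m-1)})\, D_{nm+1,n+1}\, H_{n(m+1)}(x)$ from the inside out. The crux is to interpret the inner factor $D_{nm+1,n+1} H_{n(m+1)}(x)$. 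Here I would note that the Hankel matrix $A := H_{nm}(x) H_n(x)^{\top} \in \mathcal{A}_{nm+1,n+1}$ has first column $H_{nm}(x)$ and last row $(x^{nm}, \dots, x^{nm+n})$, so by Definition~\ref{vecL} its L-vectorization is $vecL(A) = H_{n(m+1)}(x)$. Theorem~\ref{Dnth} then gives $D_{nm+1,n+1} H_{n(m+1)}(x) = vec(A) = H_n(x)\otimes H_{nm}(x)$, using the elementary identity $vec(uv^{\top}) = v\otimes u$.

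It remains to apply $I_{n+1}\otimes D_{n+1}^{(m-1)}$ and invoke the mixed-product rule $(A\otimes B)(C\otimes D) = AC\otimes BD$:
\begin{equation*}
	(I_{n+1}\otimes D_{n+1}^{(m-1)})\bigl(H_n(x)\otimes H_{nm}(x)\bigr) = H_n(x)\otimes \bigl(D_{n+1}^{(m-1)} H_{nm}(x)\bigr) = H_n(x)\otimes vec\bigl(X_n^{(m-1)}(x)\bigr),
\end{equation*}
the last equality being the induction hypothesis. By the vectorization identity of the first paragraph this equals $vec(X_n^{(m)}(x))$, closing the induction. The dimensions are consistent throughout: with $D_{n+1}^{(m-1)}\in\R^{(n+1)^m\times(nm+1)}$ and $D_{nm+1,n+1}\in\R^{(n+1)(nm+1)\times(n(m+1)+1)}$ from Theorem~\ref{Dnth}, the product lands in $\R^{(n+1)^{m+1}\times(n(m+1)+1)}$, as claimed. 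I expect the main obstacle to be the bookkeeping in recognizing $H_{n(m+1)}(x)$ as the L-vectorization of the correct Hankel matrix and in confirming that the block/column ordering produced by vectorizing $X_n^{(m)}(x)$ agrees with the Kronecker convention of Definition~\ref{vecD}; once these orderings are pinned down, the Kronecker algebra is entirely routine.
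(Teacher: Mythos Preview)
Your proposal is correct and follows essentially the same inductive argument as the paper; the only difference is that you derive inline the two identities the paper isolates as auxiliary results---namely $D_{nm+1,n+1}H_{n(m+1)}(x)=H_n(x)\otimes H_{nm}(x)$ (the paper's Lemma~\ref{lemmb}) and $vec(X_n^{(m)}(x))=H_n(x)\otimes vec(X_n^{(m-1)}(x))$ (a recursive form of the paper's Lemma~\ref{lemma}). The mixed-product step and the appeal to the induction hypothesis are identical to the paper's.
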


\begin{proposition}
	\label{identity}
	For every $n,m\ge 1$, the matrix $D_{n+1}^{(m)}$ is the right-inverse of $E_{n+1}^{(m)}$, and the product $D_{n+1}^{(m)}E_{n+1}^{(m)}$ acts on $vec(X_n^{(m)}(x))$ like an identity operator, $D_{n+1}^{(m)}E_{n+1}^{(m)}vec(X_n^{(m)}(x))=vec(X_n^{(m)}(x))$.
\end{proposition}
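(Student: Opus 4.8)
The plan is to prove the two assertions separately, since the identity-operator statement follows almost for free from the recursions already established. For that statement I would simply compose Propositions \ref{propEp} and \ref{propDp}: applying $E_{n+1}^{(m)}$ to $vec(X_n^{(m)}(x))$ yields $H_{n(m+1)}(x)$ by Proposition \ref{propEp}, and applying $D_{n+1}^{(m)}$ to $H_{n(m+1)}(x)$ returns $vec(X_n^{(m)}(x))$ by Proposition \ref{propDp}. Hence
\[
D_{n+1}^{(m)}E_{n+1}^{(m)}vec(X_n^{(m)}(x)) = D_{n+1}^{(m)} H_{n(m+1)}(x) = vec(X_n^{(m)}(x)),
\]
so the composition acts as the identity on $vec(X_n^{(m)}(x))$ for every $m\ge 1$, with no induction required.

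The right-inverse property $E_{n+1}^{(m)} D_{n+1}^{(m)} = I_{n(m+1)+1}$ is the genuine content, and I would establish it by induction on $m$ using the recursion formulas for $E_{n+1}^{(m)}$ and $D_{n+1}^{(m)}$. For the base case $m=1$ we have $E_{n+1}^{(1)} D_{n+1}^{(1)} = E_{n+1}D_{n+1} = E_{n+1,n+1}D_{n+1,n+1} = I_{2n+1}$ directly from Proposition \ref{EDp} applied with both indices equal to $n+1$, and indeed $2n+1 = n(1+1)+1$ as required.

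For the inductive step, assume $E_{n+1}^{(m-1)} D_{n+1}^{(m-1)} = I_{nm+1}$. Substituting the two recursions gives
\[
E_{n+1}^{(m)} D_{n+1}^{(m)} = E_{nm+1,n+1}\bigl(I_{n+1}\otimes E_{n+1}^{(m-1)}\bigr)\bigl(I_{n+1}\otimes D_{n+1}^{(m-1)}\bigr)D_{nm+1,n+1}.
\]
The key tool is the mixed-product property $(A\otimes B)(C\otimes D)=(AC)\otimes(BD)$ of the Kronecker product, which collapses the two middle factors into $I_{n+1}\otimes\bigl(E_{n+1}^{(m-1)} D_{n+1}^{(m-1)}\bigr)=I_{n+1}\otimes I_{nm+1}=I_{(n+1)(nm+1)}$ by the induction hypothesis. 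What then remains is $E_{nm+1,n+1}D_{nm+1,n+1}$, which equals $I_{(nm+1)+(n+1)-1}=I_{n(m+1)+1}$ again by Proposition \ref{EDp}. This closes the induction.

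I expect the only delicate point to be the dimension bookkeeping: one must check at each step that the identity factor produced by the mixed-product property has exactly the size $(n+1)(nm+1)$ needed to be absorbed between $E_{nm+1,n+1}$ and $D_{nm+1,n+1}$, and that the index substitution (the order $nm+1$ replacing the first argument) lines up with the orders appearing in the recursions of Propositions \ref{propEp} and \ref{propDp}. Once these dimensions are verified, the argument is purely algebraic.
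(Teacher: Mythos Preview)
Your proof is correct. The paper does not actually include an explicit proof of Proposition~\ref{identity}; it is stated without proof, evidently regarded as an immediate consequence of Propositions~\ref{propEp}, \ref{propDp} and~\ref{EDp}. Your argument supplies precisely the details one would expect: the identity-operator claim follows directly by composing Propositions~\ref{propEp} and~\ref{propDp}, and the right-inverse claim follows by induction on $m$ using the recursions together with the mixed-product property and Proposition~\ref{EDp}. The dimension checks you flag are routine and go through exactly as you describe.
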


\begin{example}
	\label{ex}
	Let $n=m = 2$. By Proposition \ref{propEp}, we find that $E_3^{(1)}= E_{3}$ and
	\begin{equation*}
		\footnotesize
E_3^{(2)}= E_{5,3}\left(I_{3} \otimes E_3^{(1)} \right) =E_{5,3}
		\left(\begin{array}{c:c:c}%
			E_3^{(1)} & \bigzero & \bigzero \\
			\hdashline
			\bigzero & E_3^{(1)} & \bigzero \\
			\hdashline
			\bigzero & \bigzero & E_3^{(1)} 
		\end{array}\right).
	\end{equation*} 
	To understand better, we  write $vec(X_2^{(2)}(x))$ in Example \ref{noA} as follows:
	\begin{equation*}
		\footnotesize
		vec(X_2^{(2)}(x)) = 
		vec\left(\begin{array}{ccc}
			vec\begin{pmatrix}
				\text{\textcircled{$1$}} & x &x^2\\
				\text{\textcircled{$x$}} & x^2 &x^3\\
				\text{\textcircled{$x^2$}} & \text{\textcircled{$x^3$}} &\text{\textcircled{$x^4$}}
			\end{pmatrix}\,
			vec\begin{pmatrix}
				\text{\textcircled{$x$}}  & x^2 &x^3\\
				\text{\textcircled{$x^2$}}  & x^3 &x^4\\
				\text{\textcircled{$x^3$}}  & \text{\textcircled{$x^4$}}  &\text{\textcircled{$x^5$}} 
			\end{pmatrix}\,
			vec \begin{pmatrix}
				\text{\textcircled{$x^2$}}  & x^3 &x^4\\
				\text{\textcircled{$x^3$}}  & x^4 &x^5\\
				\text{\textcircled{$x^4$}}  & \text{\textcircled{$x^5$}}  &\text{\textcircled{$x^6$}} 
			\end{pmatrix}
		\end{array}\right),
	\end{equation*}
	so that, by applying $I_{3} \otimes E_3^{(1)}$, we select from each of the three blocks of $X_2^{(2)}(x)$ their L-vectorization (remember that the L-eliminating matrix acts on the vectorization of a matrix and returns the L-vectorization of the matrix itself), elements which are marked with a circle. We are left with 
	\begin{equation}
		\footnotesize
		\label{mr}
		\left(I_3 \otimes E_3^{(1)}\right)vec(X_2^{(2)}(x)) = 
		vec\left(\begin{array}{ccc}
			\text{\textcircled{$1$}}  & x & x^2 \\
			\text{\textcircled{$x$}} & x^2 & x^3\\
			\text{\textcircled{$x^2$}}  & x^3 & x^4\\
			\text{\textcircled{$x^3$}}  & x^4 & x^5\\
			\text{\textcircled{$x^4$}}  & \text{\textcircled{$x^5$}}  & \text{\textcircled{$x^6$}} \\
		\end{array}\right).
	\end{equation}
	We notice that the elements we need are on the left-bottom "L" (the ones marked with a circle). Applying $E_{5,3}$ gives $H_6(x)= H_{2(2+1)}(x)$. Moreover, the matrix on the right hand side of equation \eqref{mr} belongs to $\mathcal{A}_{5,3}$. Then the corresponding L-duplicating matrix is given by Proposition \ref{propDp} and is
	\begin{equation*}
\footnotesize
D_3^{(2)}= \left(I_{3} \otimes D_3^{(1)} \right) D_{5,3}=
		\left(\begin{array}{c:c:c}%
			D_3^{(1)} & \bigzero & \bigzero \\
			\hdashline
			\bigzero & D_3^{(1)} & \bigzero \\
			\hdashline
			\bigzero & \bigzero & D_3^{(1)} 
		\end{array}\right)D_{5,3}.
	\end{equation*} 
	In particular, $D_{5,3}$ acting on $H_6(x)$ returns the matrix on the right hand side of equation \eqref{mr} while $D_3^{(1)}$, acting singularly on each column because of the multiplication with $I_{3}$ (namely, $I_{3} \otimes D_3^{(1)}$) returns $X_2^{(2)}(x)$, showing that $D_3^{(2)}$ is the inverse operator of $E_3^{(2)}$.
\end{example}

We now derive the closed formula for the $(m+1)$-point correlators.

\begin{theorem}[Correlator formula]
\label{theoremformula}
For every $m\ge1$, let $p_k\in \mathrm{Pol}_{n_k}(\R)$ with vector of coefficients $\vec{p}_{k}\in \R^{n+1}$, $k=0,\dots,m$, $n= \max\{n_0,\dots,n_m\}$ and $t < s_0 < s_1 < \dots < s_m$. There exist $m+1$ matrices $\tilde{G}_n^{(r)}\in \R^{(n+1)^{r+1}\times(n+1)^{r+1}}$, $r=0, \dots, m$, such that
\begin{multline*}
C_{p_0,\dots, p_m}(s_0,\dots, s_m;t)=\\ \vec{p}_{m}^{\top}\left\{ vec^{-1} \circ e^{\tilde{G}_n^{(m)}(s_0-t)}\circ vec\left(X_n^{(m)}(Y(t))\right)\right\} \prod_{k=1}^{m}e^{\tilde{G}_n^{(m-k)\top}(s_k-s_{k-1})}\left\{I_{n+1}\otimes^{m-k}\vec{p}_{m-k} \right\}
\end{multline*}
where $\prod_{k=1}^{m}$ is the product obtained starting with the matrix corresponding to $k = 1$ and multiplying on the right by the following matrices until the matrix corresponding to $k = m$. In particular, $\tilde{G}_n^{(r)} = D_{n+1}^{(r)}G_{n(r+1)}E_{n+1}^{(r)}$
 and $e^{\tilde{G}_n^{(r)}t} = D_{n+1}^{(r)}e^{G_{n(r+1)}t}E_{n+1}^{(r)}$, with $\tilde{G}_n^{(0)}= G_n^{(0)}= G_n$.
\end{theorem}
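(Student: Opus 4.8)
The plan is to isolate all the state-dependence of the correlator into a single conditional expectation of the block matrix $X_n^{(m)}$, treating the polynomial coefficients and the time increments as a fixed contraction applied on either side. Once the exponential identity in the statement is substituted, the bracketed operator becomes $\E[X_n^{(m)}(Y(s_0))\mid\F_t]$ (proved below), so the asserted formula is equivalent to
\begin{equation*}
C_{p_0,\dots,p_m}(s_0,\dots,s_m;t) = \vec{p}_m^{\top}\,\E[X_n^{(m)}(Y(s_0))\mid\F_t]\,R_1, \qquad R_1 := \prod_{k=1}^{m}e^{\tilde{G}_n^{(m-k)\top}(s_k-s_{k-1})}\{I_{n+1}\otimes^{m-k}\vec{p}_{m-k}\}.
\end{equation*}
Thus the two things to prove are a moment formula for $X_n^{(m)}$ and the fact that the contraction $\vec{p}_m^{\top}(\cdot)R_1$ reproduces the nested expectation. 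Two purely algebraic identities drive everything, both immediate from $X_n^{(k)}(x) = (H_n(x)^{\top})^{\otimes k}\otimes H_n(x) = H_n(x)(H_n(x)^{\top})^{\otimes k}$ and the mixed-product rule for the Kronecker product:
\begin{equation*}
vec(X_n^{(k)}(x)) = H_n(x)^{\otimes(k+1)} \qquad\text{and}\qquad \vec{p}_k^{\top}X_n^{(k)}(x) = p_k(x)\,(H_n(x)^{\top})^{\otimes k}.
\end{equation*}

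First I would prove the generator identity $\G\,vec(X_n^{(r)}(x)) = \tilde{G}_n^{(r)}\,vec(X_n^{(r)}(x))$ with $\tilde{G}_n^{(r)} = D_{n+1}^{(r)}G_{n(r+1)}E_{n+1}^{(r)}$, generalizing Proposition~\ref{Mnex}. Every entry of $vec(X_n^{(r)}(x))$ is a monomial of degree at most $n(r+1)$, so applying $\G$ componentwise and invoking $\G H_{n(r+1)}(x) = G_{n(r+1)}H_{n(r+1)}(x)$ from Theorem~\ref{genm} reduces matters to the selection and duplication properties $E_{n+1}^{(r)}vec(X_n^{(r)}(x)) = H_{n(r+1)}(x)$ and $D_{n+1}^{(r)}H_{n(r+1)}(x) = vec(X_n^{(r)}(x))$ of Propositions~\ref{propEp} and~\ref{propDp}; the point is that $D_{n+1}^{(r)}$ is a fixed $0/1$ duplication matrix, so it reproduces the repeated monomials $x^{j}$ and their images $\G x^{j}$ with the same pattern. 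The exponential identity then follows from the right-inverse relation $E_{n+1}^{(r)}D_{n+1}^{(r)} = I$ of Proposition~\ref{identity}, which yields $(\tilde{G}_n^{(r)})^k = D_{n+1}^{(r)}G_{n(r+1)}^kE_{n+1}^{(r)}$ for $k\ge1$; the $k=0$ term is the only discrepancy and it is annihilated on $vec(X_n^{(r)}(x))$ since $D_{n+1}^{(r)}E_{n+1}^{(r)}vec(X_n^{(r)}(x)) = vec(X_n^{(r)}(x))$. Inserting the generator identity into the martingale relation~\eqref{martingale} and solving the linear ODE for $s\mapsto\E[vec(X_n^{(r)}(Y(s)))\mid\F_\tau]$ exactly as in Theorem~\ref{genm} gives
\begin{equation*}
\E[X_n^{(r)}(Y(s))\mid\F_\tau] = vec^{-1}(e^{\tilde{G}_n^{(r)}(s-\tau)}vec(X_n^{(r)}(Y(\tau)))), \qquad \tau\le s,
\end{equation*}
which in particular identifies the bracketed operator in the theorem with $\E[X_n^{(m)}(Y(s_0))\mid\F_t]$.

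With this moment formula available, I would prove the displayed correlator formula by induction on $m$, the base case $m=1$ being Theorem~\ref{odesol2}. For the step, condition on $\F_{s_0}$ by the tower rule and pull out the $\F_{s_0}$-measurable factor $p_m(Y(s_0))$; the remaining inner expectation is the $m$-fold correlator $C_{p_0,\dots,p_{m-1}}(s_1,\dots,s_m;s_0)$, to which the induction hypothesis applies and rewrites it as $\vec{p}_{m-1}^{\top}\E[X_n^{(m-1)}(Y(s_1))\mid\F_{s_0}]\,R_2$ with $R_2 := \prod_{k=2}^{m}e^{\tilde{G}_n^{(m-k)\top}(s_k-s_{k-1})}\{I_{n+1}\otimes^{m-k}\vec{p}_{m-k}\}$. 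Because $R_1 = e^{\tilde{G}_n^{(m-1)\top}(s_1-s_0)}\{I_{n+1}\otimes^{m-1}\vec{p}_{m-1}\}R_2$, it suffices to establish the identity
\begin{equation*}
\E[p_m(Y(s_0))\,\vec{p}_{m-1}^{\top}\E[X_n^{(m-1)}(Y(s_1))\mid\F_{s_0}]\mid\F_t] = \vec{p}_m^{\top}\E[X_n^{(m)}(Y(s_0))\mid\F_t]\,e^{\tilde{G}_n^{(m-1)\top}(s_1-s_0)}\{I_{n+1}\otimes^{m-1}\vec{p}_{m-1}\}.
\end{equation*}
On the right-hand side I would use $\vec{p}_m^{\top}X_n^{(m)}(x) = p_m(x)\,vec(X_n^{(m-1)}(x))^{\top}$ together with the moment formula to turn $e^{\tilde{G}_n^{(m-1)\top}(s_1-s_0)}$ into the propagator carrying $vec(X_n^{(m-1)}(Y(s_0)))$ to $vec(\E[X_n^{(m-1)}(Y(s_1))\mid\F_{s_0}])$, and then the contraction identity $vec(N)^{\top}\{I_{n+1}\otimes^{m-1}\vec{p}_{m-1}\} = \vec{p}_{m-1}^{\top}N$, a special case of $(B\otimes A)^{\top}vec(N) = vec(A^{\top}NB)$, collapses the right-hand side to exactly the left-hand side.

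The main obstacle is the Kronecker and vectorization bookkeeping: tracking the index shift $k\mapsto k+1$ that matches the product $\prod_{k=1}^{m}$ claimed for $m$ against the product $\prod_{k=2}^{m}$ inherited from the hypothesis, and keeping every factor's orientation (row versus column, and which block is transposed) consistent so that the contraction identity applies verbatim. A secondary point that must be stated carefully is that the matrix identity $e^{\tilde{G}_n^{(r)}t} = D_{n+1}^{(r)}e^{G_{n(r+1)}t}E_{n+1}^{(r)}$ is valid only after evaluation on $vec(X_n^{(r)}(\cdot))$ --- as operators the two sides differ by $I - D_{n+1}^{(r)}E_{n+1}^{(r)}$ --- so one must check that in the assembled formula every exponential indeed acts on a vector lying in the range where this projection is the identity.
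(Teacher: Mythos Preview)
Your proposal is correct and follows the same two-stage architecture as the paper: first a moment formula for $X_n^{(r)}$ via the eliminating/duplicating matrices and the linear ODE argument of Theorem~\ref{genm}, then induction on $m$ with the tower rule. Where you diverge is in the algebraic lemma driving the inductive step. The paper isolates this into Proposition~\ref{mixpro}, the uncontracted identity $H_n(x)\vec{v}^{\top}\{vec^{-1}\circ M\circ vec(X_n^{(m-1)}(x))\} = X_n^{(m)}(x)\,M^{\top}\{I_{n+1}\otimes^{m-1}\vec{v}\}$, which it proves by its own block-matrix induction on $m$. You instead contract with $\vec{p}_m^{\top}$ at the outset, reducing what is needed to the one-line identities $\vec{p}_m^{\top}X_n^{(m)}(x) = p_m(x)\,vec(X_n^{(m-1)}(x))^{\top}$ and $vec(N)^{\top}\{I_{n+1}\otimes^{m-1}\vec{v}\} = \vec{v}^{\top}N$. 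These are precisely Proposition~\ref{mixpro} after left-multiplication by $\vec{p}_m^{\top}$, so the content is the same; your route simply avoids the separate induction and keeps the Kronecker bookkeeping lighter, at the price of not recording the stronger matrix-valued identity. Your closing caveat that $e^{\tilde{G}_n^{(r)}t} = D_{n+1}^{(r)}e^{G_{n(r+1)}t}E_{n+1}^{(r)}$ holds only on the range where $D_{n+1}^{(r)}E_{n+1}^{(r)}$ acts as the identity is well taken; the paper handles this implicitly through Proposition~\ref{identity} (and Proposition~\ref{Mnex} for $r=1$), and in the assembled formula every such exponential indeed hits $vec(X_n^{(r)}(\cdot))$, so no gap arises.
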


\section{Recursion formulas for the generator matrix}
\label{recursionG}
We focus in this section on the generator matrix $G_n$ defined in Theorem \ref{genm}. In particular, we provide a recursion formula for it and a second recursion for its matrix exponential. These formulas are referred to the basis vector of monomials, but they can also be  generalized to a different polynomial basis vector. In this case, one needs the matrix for the change of basis.

\subsection{The generator matrix}
We provide a recursion formula for the generator matrix.
\begin{theorem}[Generator matrix recursion]
	\label{Gn}
	For every $n\ge 2$, the generator matrix $G_n\in \R^{(n+1)\times (n+1)}$ satisfying equation \eqref{Gndef} with respect to the vector basis of monomials $H_n(x)$ is given by
	\begin{equation*}
		G_n = \begin{pmatrix}
			G_{n-1} & \vec{0}_n \\ \vec{a}_n^{\top} & c_n
		\end{pmatrix} \quad \mbox{ with } \quad G_1 = \begin{pmatrix}
			0 & 0 \\ b_0 & b_1
		\end{pmatrix} .
	\end{equation*}
	Here $\vec{0}_n$ is a $n$-dimensional vector of $0$'s, $\vec{a}_n = (a_n^n, a_n^{n-1},\dots, a_n^1)^{\top} \in \R^n$ with
	\begin{equation}
		\label{an}
		\begin{aligned}
			a_n^1 &= nb_0+\frac{1}{2}n(n-1)\sigma_1+\sum_{k=2}^n \binom{n}{k} \xi_{k-1}^k, \qquad a_n^2 =\frac{1}{2}n(n-1)\sigma_0+\sum_{k=2}^n \binom{n}{k} \xi_{k-2}^k,\\
			a_n^i &= \sum_{k=i}^n \binom{n}{k}\xi_{k-i}^k \quad \mbox{ for } i = 3,\dots,n,\qquad \mbox{and }c_n = nb_1 + \frac{1}{2}n(n-1)\sigma_2 + \sum_{k=2}^n \binom{n}{k} \xi_k^{k}.
		\end{aligned}
	\end{equation}
\end{theorem}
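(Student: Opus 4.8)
The plan is to prove the recursion by computing the action of the generator $\G$ on each monomial $x^k$ directly from the explicit form of $\G$ in equations \eqref{genD} and \eqref{poldefcond}, and then reading off the rows of $G_n$. The key structural observation is that the generator matrix is block lower-triangular: since $\G$ maps $\mathrm{Pol}_n(\R)$ to itself, and more precisely maps $x^k$ to a polynomial of degree at most $k$ (because $b\in\mathrm{Pol}_1$, $\sigma^2\in\mathrm{Pol}_2$, and $\int z^m\ell(\cdot,dz)\in\mathrm{Pol}_m$ raise the degree by at most the amount they consume), the coefficient of $x^j$ in $\G x^k$ vanishes whenever $j>k$. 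This immediately gives the $\vec{0}_n$ block in the top-right corner and shows that the upper-left $(n\times n)$ block of $G_n$ depends only on the action of $\G$ on $1,x,\dots,x^{n-1}$, hence coincides with $G_{n-1}$. The only genuinely new information in passing from $G_{n-1}$ to $G_n$ is the last row, which encodes $\G x^n = \sum_{j=0}^n [G_n]_{n+1,\,j+1}\,x^j$.

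First I would establish the base case $G_1$ by direct computation: $\G 1 = 0$ and $\G x = b_0 + b_1 x$, which gives exactly the stated $2\times 2$ matrix. Then I would carry out the main computation, namely expanding $\G x^n$ term by term. Writing $f(x)=x^n$, the drift term contributes $b(x)\,n x^{n-1} = n b_0 x^{n-1} + n b_1 x^n$; the diffusion term contributes $\tfrac{1}{2}\sigma^2(x)\,n(n-1)x^{n-2} = \tfrac{1}{2}n(n-1)(\sigma_0 + \sigma_1 x + \sigma_2 x^2)x^{n-2}$; and the jump term contributes, via the binomial expansion of $(x+z)^n - x^n - n x^{n-1} z$ integrated against $\ell(x,dz)$, the sum $\sum_{k=2}^{n}\binom{n}{k}x^{n-k}\int_{\R} z^k\,\ell(x,dz) = \sum_{k=2}^n \binom{n}{k}x^{n-k}\sum_{i=0}^k \xi_i^k x^i$, using equation \eqref{poldefcond}. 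Collecting the coefficient of each power $x^{n-i}$, for $i=1,\dots,n$, yields the entries $a_n^i$, and collecting the coefficient of $x^n$ yields $c_n$. Matching powers carefully (the drift and diffusion contribute only to the top two subdiagonal entries $a_n^1, a_n^2$ and to $c_n$, while the jump term spreads across all of them) reproduces the formulas in \eqref{an}.

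The main obstacle will be purely bookkeeping: aligning the index shifts coming from the three sources so that the coefficient of a fixed power $x^{n-i}$ is collected correctly. In particular, one must track that the jump term's contribution to the coefficient of $x^{n-i}$ is $\sum_{k=i}^{n}\binom{n}{k}\xi_{k-i}^k$ (arising from pairing $x^{n-k}$ with $x^{i}$ so that $k - (\text{exponent of }x\text{ from }\xi) = i$), and then add to this the drift contribution $n b_0$ (only for $i=1$) and the diffusion contributions $\tfrac{1}{2}n(n-1)\sigma_1$ (for $i=1$) and $\tfrac{1}{2}n(n-1)\sigma_0$ (for $i=2$). For $c_n$, the coefficient of $x^n$, one collects $n b_1$ from the drift, $\tfrac{1}{2}n(n-1)\sigma_2$ from the diffusion, and $\sum_{k=2}^n\binom{n}{k}\xi_k^k$ from the jump term. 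Once these sums are verified to match \eqref{an}, the claim follows, since the remaining rows of $G_n$ are inherited unchanged from $G_{n-1}$ by the triangularity argument. I would close by remarking that the well-definedness of these entries is guaranteed by the moment integrability assumption $\int_{\R}|z|^n\ell(x,dz)<\infty$ that makes $\G$ well defined on $\mathrm{Pol}(\R)$.
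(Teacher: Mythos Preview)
Your proposal is correct and follows essentially the same approach as the paper: both arguments reduce to computing $\G x^n$ explicitly via the drift, diffusion, and (binomially expanded) jump contributions, then reading off the coefficients of $x^{n-i}$ to obtain the last row $(\vec{a}_n^{\top}, c_n)$. The only cosmetic difference is that the paper phrases the block structure as an induction on $n$ (with base case $n=2$ handled by Example~\ref{G2ex}), whereas you obtain it directly from the degree-preserving property of $\G$; the computational core is identical.
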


\begin{remark}
	\label{oss}
	From Theorem \ref{Gn}, we notice that for $n\ge 1$ the generator matrix $G_n$ is lower triangular. Moreover, for $n\ge 2$ the main diagonal of $G_n$ is of the form
	\begin{equation}
		\label{diag}
		\mathrm{diag}\left(G_n\right) = \left(0, \,b_1,\, c_2,\,c_3,\, \dots, c_n\right)^{\top},
	\end{equation}
	so that, in particular, the matrix $G_n$ is not invertible. 
\end{remark}

\begin{lemma}
	\label{delta0}
	If $\ell(x,dz)\equiv0$ on $\R$, then $G_n$ is a (lower) tri--diagonal matrix.
\end{lemma}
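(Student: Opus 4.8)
The plan is to examine how the jump kernel $\ell(x,dz)$ enters the entries of $G_n$ described in Theorem~\ref{Gn}, and to show that when $\ell\equiv 0$ the off-diagonal mass of $G_n$ collapses onto the single subdiagonal. Recall from \eqref{poldefcond} that the constants $\xi_i^m$ are precisely the coefficients of $\int_{\R}z^m\ell(x,dz)=\sum_{i=0}^m\xi_i^m x^i$. Hence the hypothesis $\ell(x,dz)\equiv 0$ forces $\xi_i^m=0$ for all $m\ge 2$ and all $0\le i\le m$. First I would substitute this into the formulas \eqref{an} for the entries of the vector $\vec{a}_n$ and the scalar $c_n$.

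Carrying out the substitution, every sum of the form $\sum_{k=i}^n\binom{n}{k}\xi_{k-i}^k$ vanishes. Thus $a_n^i=0$ for $i=3,\dots,n$, while the surviving entries reduce to $a_n^1=nb_0+\tfrac12 n(n-1)\sigma_1$, $a_n^2=\tfrac12 n(n-1)\sigma_0$, and $c_n=nb_1+\tfrac12 n(n-1)\sigma_2$. Recalling the ordering convention $\vec{a}_n=(a_n^n,a_n^{n-1},\dots,a_n^1)^{\top}$, the only nonzero components of $\vec{a}_n$ are the last two, namely $a_n^2$ and $a_n^1$. When this row vector is placed as the bottom row $(\vec{a}_n^{\top},c_n)$ of $G_n$, the nonzero entries sit in the two positions immediately left of the diagonal entry $c_n$, i.e.\ on the subdiagonal and the diagonal (together with a contribution one further to the left from $a_n^2$, which lands exactly on the subdiagonal of the augmented row). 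I would make this indexing precise by tracking which column each $a_n^i$ occupies.

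The main structural claim then follows by induction on $n$ using the block recursion $G_n=\begin{pmatrix}G_{n-1}&\vec{0}_n\\ \vec{a}_n^{\top}&c_n\end{pmatrix}$. The base case $G_1=\begin{pmatrix}0&0\\ b_0&b_1\end{pmatrix}$ is already tridiagonal. For the inductive step, $G_{n-1}$ is tridiagonal by hypothesis, the appended column $\vec{0}_n$ contributes nothing above the diagonal (consistent with the known lower-triangularity from Remark~\ref{oss}), and the newly appended bottom row has been shown to have nonzero entries only in its last two slots. Since $G_n$ is already lower triangular, it remains only to verify that all entries strictly below the subdiagonal vanish; the inductive hypothesis handles the top-left block, and the bottom-row computation handles the final row. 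This yields a lower tridiagonal matrix, completing the proof.

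The only subtlety — and the step I would be most careful about — is the bookkeeping of indices: confirming that $a_n^2$ indeed lands on the subdiagonal (column $n-1$ of a matrix with rows and columns indexed from $0$ to $n$) rather than one position further away, so that no entry appears on a second subdiagonal. This is a routine but easy-to-misalign index check, and it is the crux of distinguishing a genuinely tridiagonal matrix from a merely lower-triangular one; everything else is direct substitution and an induction that piggybacks on the already-established lower-triangular structure.
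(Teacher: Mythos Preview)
Your overall strategy --- set $\xi_i^m=0$ in the formulas of Theorem~\ref{Gn} and read off the band structure via the block recursion --- is exactly right, and matches what the paper intends. However, the index check you flag as ``the only subtlety'' actually comes out the opposite way from what you expect, and this undermines your conclusion.

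With $\ell\equiv 0$ you correctly obtain $a_n^i=0$ for $i\ge 3$, but $a_n^2=\tfrac12 n(n-1)\sigma_0$ is generically nonzero. In the bottom row $(\vec a_n^{\top},c_n)=(a_n^n,\dots,a_n^2,a_n^1,c_n)$ of the $(n+1)\times(n+1)$ matrix, the entry $a_n^2$ sits in column $n-2$ (0-indexed), i.e.\ on the \emph{second} subdiagonal, not the first. So the bottom row has three potentially nonzero slots, not two, and the matrix is \emph{not} tridiagonal in the standard sense (main, super-, sub-diagonal). You can verify this already for $n=2$ using Example~\ref{G2ex}: with $\ell\equiv 0$ one gets
\[
G_2=\begin{pmatrix}0&0&0\\ b_0&b_1&0\\ \sigma_0&\sigma_1+2b_0&\sigma_2+2b_1\end{pmatrix},
\]
which has $\sigma_0$ on the second subdiagonal.

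The lemma's phrasing ``(lower) tri--diagonal'' means a lower-triangular matrix with three nonzero bands: the main diagonal and the first two subdiagonals. Once you accept this reading, your argument goes through verbatim: the last row contributes only to columns $n-2,n-1,n$, the inductive hypothesis handles the $G_{n-1}$ block, and the zero column preserves lower-triangularity. The fix is purely terminological --- replace ``last two slots'' by ``last three slots'' and drop the expectation that $a_n^2$ collapses onto the first subdiagonal.
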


We now provide a recursion formula for the matrix exponential of $G_n$.  
\begin{theorem}[Exponential generator matrix recursion]
\label{Gnk}
For a fixed $n\ge 2$, if the following conditions are satisfied
\begin{equation}
	\label{lambdacond}
	\begin{cases}
		c_j \ne 0 & \mbox{for every }\; 2 \le j \le n\\
		c_j \ne c_i &\mbox{for every }\; 1 \le j  < i\le n
	\end{cases}
\end{equation}
then the recursion formula holds:
\begin{align*}
&e^{G_nt} =\begin{pmatrix}
e^{G_{n-1}t} & \vec{0}_n \\
\vec{a}_n^{\top}\Lambda_n^{-1}\left(e^{c_nt}I_{n}-e^{G_{n-1}t}\right) & e^{c_nt} 
\end{pmatrix} \quad \mbox{ with } \\&e^{G_1t} =\begin{pmatrix}
1 & 0 \\
\frac{b_0}{b_1}\left(e^{b_1t}-1\right) & e^{b_1t}
\end{pmatrix} \mbox{ if } b_1\ne 0 \quad\mbox{ and } \quad e^{G_1t} =\begin{pmatrix}
1 & 0 \\
b_0t & 1
\end{pmatrix} \mbox{ if } b_1= 0.
\end{align*}
\end{theorem}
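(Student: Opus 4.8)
The plan is to exploit the block lower-triangular structure of $G_n$ recorded in Theorem \ref{Gn} and reduce the computation of $e^{G_nt}$ to a single scalar-times-matrix convolution. Writing $G_n=\begin{psmallmatrix}G_{n-1}&\vec 0_n\\ \vec a_n^{\top}&c_n\end{psmallmatrix}$ and using that block lower-triangularity is preserved under the exponential power series, I would first argue that $e^{G_nt}$ has the same $2\times 2$ block shape, with diagonal blocks $e^{G_{n-1}t}$ (top-left) and $e^{c_nt}$ (bottom-right) and a vanishing top-right block, so that only the bottom-left block $w(t)^{\top}\in\R^{1\times n}$ is unknown. Differentiating $e^{G_nt}$ and matching the bottom block row of the identity $\tfrac{d}{dt}e^{G_nt}=G_n\,e^{G_nt}$ yields the linear inhomogeneous ODE $\dot w(t)^{\top}=\vec a_n^{\top}e^{G_{n-1}t}+c_n\,w(t)^{\top}$ with $w(0)=\vec 0_n$. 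Solving it with the integrating factor $e^{-c_nt}$ gives the integral representation $w(t)^{\top}=\vec a_n^{\top}\int_0^t e^{c_n(t-s)}e^{G_{n-1}s}\,ds$.

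The key algebraic step is to put this integral in closed form. Since $c_nI_n$ commutes with $G_{n-1}$, I can write $e^{c_n(t-s)}e^{G_{n-1}s}=e^{c_nt}e^{(G_{n-1}-c_nI_n)s}$ and integrate, which requires the matrix $\Lambda_n:=c_nI_n-G_{n-1}$ to be invertible. By Remark \ref{oss}, $G_{n-1}$ is lower triangular with diagonal $(0,b_1,c_2,\dots,c_{n-1})$, hence $\Lambda_n$ is lower triangular with diagonal entries $c_n,\,c_n-b_1,\,c_n-c_2,\dots,c_n-c_{n-1}$; these are all nonzero \emph{precisely} under conditions \eqref{lambdacond} (the first line forces $c_n\ne 0$, and the second forces $c_n$ to differ from $b_1=c_1$ and from $c_2,\dots,c_{n-1}$). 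Evaluating $\int_0^t e^{(G_{n-1}-c_nI_n)s}\,ds=\Lambda_n^{-1}\bigl(I_n-e^{(G_{n-1}-c_nI_n)t}\bigr)$ and simplifying $e^{c_nt}e^{(G_{n-1}-c_nI_n)t}=e^{G_{n-1}t}$ produces $w(t)^{\top}=\vec a_n^{\top}\Lambda_n^{-1}\bigl(e^{c_nt}I_n-e^{G_{n-1}t}\bigr)$, which is exactly the claimed off-diagonal block. The base case $n=1$ is then a direct computation: $G_1$ has the two eigenvalues $0$ and $b_1$, and the same convolution gives $w(t)=b_0\int_0^t e^{b_1(t-s)}\,ds$, equal to $\tfrac{b_0}{b_1}(e^{b_1t}-1)$ when $b_1\ne 0$ and to $b_0t$ when $b_1=0$, matching the two stated forms of $e^{G_1t}$.

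I expect the main obstacle to be bookkeeping rather than genuine difficulty: one must justify that $\Lambda_n^{-1}$ commutes with $G_{n-1}$ and with $c_nI_n$ (it does, being a rational function of $G_{n-1}$), so that the rearrangement $e^{c_nt}\Lambda_n^{-1}e^{(G_{n-1}-c_nI_n)t}=\Lambda_n^{-1}e^{G_{n-1}t}$ is legitimate, and one must confirm that \eqref{lambdacond} is the sharp hypothesis guaranteeing invertibility of $\Lambda_n$ at this single recursion step (the conditions being stated for all indices $2\le j\le n$ and $1\le j<i\le n$ so that, when the recursion is unfolded, every intermediate $\Lambda_k$ is invertible as well). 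A clean alternative that sidesteps solving the ODE altogether is to take the claimed right-hand side as an ansatz $\Phi(t)$, verify directly that $\Phi(0)=I_{n+1}$ and $\dot\Phi(t)=G_n\Phi(t)$ using the above commutation facts, and invoke uniqueness of solutions to the linear matrix initial-value problem to conclude $\Phi(t)=e^{G_nt}$.
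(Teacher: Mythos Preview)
Your proposal is correct and takes a genuinely different route from the paper. The paper proves the result by first establishing, via direct multiplication, a closed formula for every integer power,
\[
G_n^k=\begin{pmatrix} G_{n-1}^k & \vec 0_n\\ \vec a_n^{\top}\Lambda_n^{-1}\bigl(c_n^kI_n-G_{n-1}^k\bigr) & c_n^k\end{pmatrix},\qquad k\ge 1,
\]
and then summing the exponential series $\sum_{k\ge 0}t^kG_n^k/k!$ block by block. Your argument instead fixes the block shape of $e^{G_nt}$ by the lower-triangular structure, isolates the unknown bottom-left block $w(t)^{\top}$, derives the scalar-forced linear ODE $\dot w(t)^{\top}=\vec a_n^{\top}e^{G_{n-1}t}+c_n\,w(t)^{\top}$, and closes the resulting convolution integral using invertibility of $\Lambda_n$; the commutation facts you invoke ($\Lambda_n^{-1}$ is a rational function of $G_{n-1}$, and $c_nI_n$ commutes with everything) are exactly what is needed. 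The trade-off is that the paper's approach yields the explicit power formula $G_n^k$ as a byproduct, while yours is shorter, requires no inductive verification of that power identity, and makes transparent why only $\Lambda_n$ needs to be invertible for the \emph{single} recursion step (you correctly note that the full hypothesis \eqref{lambdacond} is there to cover all intermediate $\Lambda_k$ when the recursion is unfolded down to $e^{G_1t}$). Your suggested alternative---take the right-hand side as an ansatz $\Phi(t)$ and verify $\Phi(0)=I_{n+1}$, $\dot\Phi(t)=G_n\Phi(t)$---is also a valid and even more economical variant of the same idea.
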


\begin{lemma}
	\label{corrr}
	If $\ell(x,dz)\equiv0$ on $\R$, then condition \eqref{lambdacond} is equivalent to $b_1 \ne -\frac{k}{2}\sigma_2$ for every $1 \le k \le 2(n-1)$. In particular, the coefficients $b_1$ and $\sigma_2$ cannot be simultaneously equal to $0$.
\end{lemma}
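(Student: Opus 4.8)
The plan is to exploit the explicit formula for the diagonal entries of $G_n$ and reduce both families of conditions in \eqref{lambdacond} to simple linear (in)equalities relating $b_1$ and $\sigma_2$. First I would observe that setting $\ell(x,dz)\equiv 0$ forces every jump coefficient $\xi_i^m$ appearing in \eqref{an} to vanish, so that $c_j = jb_1 + \tfrac{1}{2}j(j-1)\sigma_2$ for $2\le j\le n$. Adopting the convention $c_1:=b_1$, consistent with the diagonal \eqref{diag}, the single factored formula $c_j = j\bigl(b_1+\tfrac{1}{2}(j-1)\sigma_2\bigr)$ then holds for all $1\le j\le n$, and it drives the whole argument. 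For the first family $c_j\neq 0$ with $2\le j\le n$, this factorization immediately gives the equivalence: since $j\ge 2>0$, the entry $c_j$ vanishes precisely when $b_1=-\tfrac{j-1}{2}\sigma_2$, so the family is equivalent to $b_1\neq -\tfrac{k}{2}\sigma_2$ for $k=j-1$ ranging over $\{1,\dots,n-1\}$.

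For the second family $c_j\neq c_i$ with $1\le j<i\le n$, the key algebraic step is the identity $i(i-1)-j(j-1)=(i-j)(i+j-1)$, which yields $c_i-c_j=(i-j)\bigl(b_1+\tfrac{1}{2}(i+j-1)\sigma_2\bigr)$. Since $i>j$ the factor $i-j$ is nonzero, so $c_i=c_j$ exactly when $b_1=-\tfrac{i+j-1}{2}\sigma_2$. As $(j,i)$ runs over all pairs with $1\le j<i\le n$, the integer $i+j-1$ attains its minimum $2$ at $(1,2)$ and its maximum $2(n-1)$ at $(n-1,n)$, taking every value in between; hence this family is equivalent to $b_1\neq -\tfrac{k}{2}\sigma_2$ for all $k\in\{2,\dots,2(n-1)\}$.

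Finally I would take the union of the two forbidden index sets, $\{1,\dots,n-1\}\cup\{2,\dots,2(n-1)\}=\{1,\dots,2(n-1)\}$ for $n\ge 2$, which delivers the stated equivalence $b_1\neq -\tfrac{k}{2}\sigma_2$ for $1\le k\le 2(n-1)$. The \emph{in particular} claim is then immediate: taking $b_1=\sigma_2=0$ gives $b_1=0=-\tfrac{k}{2}\sigma_2$, which violates the inequality for every $k$ (equivalently, all $c_j$ vanish, breaking the first family). The only mildly delicate point is the bookkeeping in this last step, namely verifying that $i+j-1$ covers the full integer range $\{2,\dots,2(n-1)\}$ without gaps and that the union with $\{1,\dots,n-1\}$ supplies the missing value $k=1$; everything else is a direct computation with no genuine obstacle.
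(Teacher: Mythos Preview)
Your proof is correct and follows essentially the same approach as the paper: both compute $c_j$ explicitly under $\ell\equiv 0$, reduce $c_j\neq 0$ to $b_1\neq -\tfrac{j-1}{2}\sigma_2$, reduce $c_j\neq c_i$ to $b_1\neq -\tfrac{i+j-1}{2}\sigma_2$, and then take the union of the resulting index ranges. Your version is slightly more explicit in stating the factorization $c_i-c_j=(i-j)\bigl(b_1+\tfrac{1}{2}(i+j-1)\sigma_2\bigr)$ and in verifying that $i+j-1$ covers $\{2,\dots,2(n-1)\}$ without gaps, but these are presentational refinements rather than a different argument.
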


\subsection{Change of basis}
\label{changebasis}
The vector basis of monomials $H_n(x)$ is intuitive and allows to write down computations easily and explicitly. However, when it comes to applications, it is often more natural to choose an orthogonal basis, such as the Hermite polynomials or the Legendre polynomials, among others. Combining the properties of an orthogonal basis with the properties of polynomial processes, leads to improvements, e.g., in option pricing \cite{ackerer2020b, filipovic2020, willems2019}. Motivated by this fact, we present a result which allows to obtain the generator matrix and its exponential with respect to any basis of polynomials. This allows to employ our framework in a wider range of applications.

For $n\in \N$, we consider a set of polynomial functions $\{q_0(x), q_1(x), \dots, q_n(x)\}$ with values in $\R$ which forms a basis for $\mathrm{Pol}_n(\R)$. We then introduce the vector valued function
\begin{equation*}
	Q_n:\R \longrightarrow \R^{n+1}, \quad Q_n(x) = (q_0(x), q_1(x), \dots, q_n(x))^{\top}.
\end{equation*}
From classical linear algebra, there exists an invertible matrix $M_n\in \R^{(n+1)\times (n+1)}$ such that
\begin{equation}
	M_nH_n(x) = Q_n(x) \quad \mbox{ and } \quad M_n^{-1}Q_n(x)=H_n(x). \label{MM}
\end{equation}
We further  indicate with $J_n\in \R^{(n+1)\times(n+1)}$ the generator matrix in the sense of Theorem \ref{genm} with respect to the basis vector $Q_n(x)$, namely such that
\begin{equation}
	\G Q_n(x) = J_n Q_n(x).\label{Q}
\end{equation}
We can then prove the following result concerning $J_n$.
\begin{proposition}
\label{key}
For every $n\in \N$ and  $t\ge0$, the generator matrix $J_n$ and its matrix exponential are given by the following matrix products
\begin{equation*}
	J_n = M_nG_nM_n^{-1} \quad \mbox{and}	\quad e^{J_nt} = M_ne^{G_nt}M_n^{-1}.
\end{equation*}
\end{proposition}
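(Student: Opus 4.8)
The plan is to prove Proposition \ref{key} directly from the defining relations for the two generator matrices together with the change-of-basis identities in \eqref{MM}. First I would establish the formula for $J_n$ by a short computation: starting from the definition \eqref{Q}, namely $\G Q_n(x) = J_n Q_n(x)$, I substitute $Q_n(x) = M_n H_n(x)$ on the left-hand side and use linearity of the extended generator $\G$ together with the fact that $M_n$ is a constant matrix, so that $\G (M_n H_n(x)) = M_n\, \G H_n(x)$. Applying the generator-matrix relation \eqref{Gndef}, $\G H_n(x) = G_n H_n(x)$, this becomes $M_n G_n H_n(x)$. On the right-hand side I rewrite $J_n Q_n(x) = J_n M_n H_n(x)$. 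Equating the two expressions gives $M_n G_n H_n(x) = J_n M_n H_n(x)$ for all $x\in\R$.

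The key step is then to pass from an identity of vector-valued functions to an identity of matrices. Since the components of $H_n(x)=(1,x,\dots,x^n)^\top$ are linearly independent functions, any matrix identity $P H_n(x) = R H_n(x)$ holding for all $x$ forces $P = R$; concretely, evaluating at $n+1$ distinct points yields a Vandermonde system whose invertibility pins down the matrices uniquely. Applying this to $M_n G_n H_n(x) = J_n M_n H_n(x)$ gives $M_n G_n = J_n M_n$, and multiplying on the right by the invertible $M_n^{-1}$ (which exists by the classical linear-algebra statement preceding \eqref{MM}) yields $J_n = M_n G_n M_n^{-1}$, as claimed.

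For the matrix exponential I would invoke the standard conjugation property of the matrix exponential: for any invertible $M_n$ one has $e^{M_n G_n M_n^{-1} t} = M_n e^{G_n t} M_n^{-1}$. This follows immediately term-by-term from the power-series definition, since $\left(M_n G_n M_n^{-1}\right)^k = M_n G_n^k M_n^{-1}$ by telescoping cancellation of the interior $M_n^{-1}M_n$ factors, and $M_n$ commutes with the scalar summation. Substituting $J_n = M_n G_n M_n^{-1}$ from the first part gives $e^{J_n t} = M_n e^{G_n t} M_n^{-1}$, completing the proof.

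I do not expect a serious obstacle here, as the result is essentially the elementary statement that the generator matrix transforms by conjugation under a change of polynomial basis. The only point requiring a little care is the justification that $\G$ commutes with the constant matrix $M_n$ when acting on the vector $H_n(x)$ componentwise — this is just linearity of $\G$ applied entrywise — and the uniqueness argument that strips off $H_n(x)$, which relies on the linear independence of the monomials $1,x,\dots,x^n$. Both are routine, so the proof is short.
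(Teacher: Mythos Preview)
Your proposal is correct and follows essentially the same approach as the paper: the paper also chains the identities $J_n M_n H_n(x) = J_n Q_n(x) = \G Q_n(x) = \G(M_n H_n(x)) = M_n \G H_n(x) = M_n G_n H_n(x)$, strips off $H_n(x)$ to obtain $J_n M_n = M_n G_n$, and then derives the exponential identity from the power-series definition via $M_n^{-1} M_n = I$. Your version is slightly more explicit about justifying the passage from the functional identity to the matrix identity (via linear independence of the monomials), which the paper leaves implicit.
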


\begin{example}
	Let $Q_n(x)$ be the vector basis given by Hermite polynomials. For $n=4$ we get
	\begin{equation*}
		Q_4(x) = (1,\,x,\,x^2-1,\, x^3-3x, x^4-6x^2+3)
	\end{equation*}
	while $M_4$ and $M_4^{-1}$ are respectively given by
	\begin{equation*}
		\footnotesize
		M_4 = \begin{pmatrix}
			1&0&0&0&0\\
			0&1&0&0&0\\
			-1&0&1&0&0\\
			0&-3&0&1&0\\
			3&0&-6&0&1\\
		\end{pmatrix} \quad \mbox{\normalsize and} \quad M_4^{-1} = \begin{pmatrix}
			1&0&0&0&0\\
			0&1&0&0&0\\
			1&0&1&0&0\\
			0&3&0&1&0\\
			3&0&6&0&1\\
		\end{pmatrix}.
	\end{equation*}
	We consider $\ell(x,dz)\equiv0$. By direct computation, one finds that
	\begin{equation*}
		\footnotesize
		G_4 = \begin{pmatrix}
			0&0&0&0&0\\
			b_0&b_1&0&0&0\\
			\sigma_0&2b_0+\sigma_1&2b_1+\sigma_2&0&0\\
			0&3\sigma_0&3(b_0+\sigma_1)&3(b_1+\sigma_2)&0\\
			0&0&6\sigma_0&2(2b_0+3\sigma_1)&2(2b_1+3\sigma_2)\\
		\end{pmatrix} \quad \mbox{\normalsize and}
	\end{equation*}
	\begin{equation*}
		\footnotesize
			J_4 = \begin{pmatrix}
				0&0&0&0&0\\
				b_0&b_1&0&0&0\\
				\sigma_0+2b_1+\sigma_2&2b_0+\sigma_1&2b_1+\sigma_2&0&0\\
				3\sigma_1&3(2b_1+3\sigma_2+\sigma_0)&3(b_0+\sigma_1)&3(b_1+\sigma_2)&0\\
				12\sigma_2&12\sigma_1&6(2b_1+\sigma_0+5\sigma_2)&2(2b_0+3\sigma_1)&2(2b_1+3\sigma_2)\\
			\end{pmatrix}.
	\end{equation*}
	By matrix multiplication Proposition \ref{key} can be verified.
\end{example}

\section{Applications and numerical aspects}
\label{experiments}
Having a closed and compact formula for correlators like in Theorem \ref{theoremformula} is attractive in sensitivity analysis and risk management. For example, in applications to options, the Greeks play an important role in hedging. The Greeks for options are defined as the derivatives of the price functional with respect to various parameters. In the context of path-dependent options introduced in Section \ref{pathdependent}, we shall derive in this section the expression for two of the Greeks, namely the Delta and the Theta. We shall then analyse our correlator formula from a numerical point of view, also in relation with the iterated moment formula and with a Monte Carlo approach.

\subsection{Computation of Greeks}
Two of the most common Greeks are the Delta and the Theta. The first measures the change in the option price with respect to a change in the underlying asset price. The second measures the sensitivity of the option to time to exercise.

From Section \ref{pathdependent}, the price of an Asian option can be approximated by a linear combination of conditional expectations of the form  $C^{k_0,\dots, k_m}(s_0,\dots, s_m;t) := \E\left[\left.Y(s_0)^{k_0} Y(s_1)^{k_1} \cdots Y(s_m)^{k_m}\right|\F_t\right]$, corresponding to correlators $C_{p_0,\dots, p_m}(s_0,\dots, s_m;t)$ with $p_j(x) =x^{k_j}$, $j=0,\dots, m$. Then to calculate the Delta of the Asian option, which is the partial derivative of the price functional $\Pi(t)$ with respect to the initial condition $Y(t)$, we need the derivative of $C_{p_0,\dots, p_m}(s_0,\dots, s_m;t)$ with respect to $Y(t)$.

\begin{proposition}
\label{propdelta}
For every $m\ge1$, in the same notation of Theorem \ref{theoremformula}, we have that
\begin{multline*}
	\frac{\partial C_{p_0,\dots, p_m}(s_0,\dots, s_m;t)}{\partial Y(t)}=\\
	\vec{p}_{m}^{\top}\left\{ vec^{-1} \circ e^{\tilde{G}_n^{(m)}(s_0-t)}\circ vec\left(	\frac{\partial X_n^{(m)}(Y(t))}{\partial Y(t)}\right)\right\} \prod_{k=1}^{m}e^{\tilde{G}_n^{(m-k)\top}(s_k-s_{k-1})}\left\{I_{n+1}\otimes^{m-k}\vec{p}_{m-k} \right\}
\end{multline*}
where
\begin{equation*}
\frac{\partial X_n^{(m)}(Y(t))}{\partial Y(t)} =
\left(\frac{\partial H_n(Y(t))}{\partial Y(t)}\right)^{\top} \otimes X_n^{(m-1)}(Y(t)) + H_n(Y(t))^{\top}\otimes  \frac{\partial X_n^{(m-1)}(Y(t))}{\partial Y(t)}
\end{equation*}
with $\frac{\partial H_n(Y(t))}{\partial Y(t)} = \frac{\partial X_n^{(0)}(Y(t))}{\partial Y(t)}  =  \left(0,1,\dots, n\right)\left(0, H_{n-1}(Y(t))^{\top}\right)^{\top}$.
\end{proposition}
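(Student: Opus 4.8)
The plan is to exploit the fact that, in the correlator formula of Theorem \ref{theoremformula}, the dependence on the initial state $Y(t)$ is entirely confined to the single factor $X_n^{(m)}(Y(t))$. Every other ingredient---the matrices $\tilde{G}_n^{(r)}$ and their exponentials, the coefficient vectors $\vec{p}_k$, the times $s_0,\dots,s_m,t$, and the Kronecker factors $I_{n+1}\otimes^{m-k}\vec{p}_{m-k}$---is constant in $Y(t)$. Hence the partial derivative $\partial/\partial Y(t)$ acts only on $X_n^{(m)}(Y(t))$, and the whole computation reduces to differentiating that matrix of monomials and then carrying the result unchanged through the surrounding linear operations.

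First I would record that $vec$, $vec^{-1}$, and left/right multiplication by a constant matrix (in particular by $e^{\tilde{G}_n^{(m)}(s_0-t)}$ and by the trailing product $\prod_{k=1}^m e^{\tilde{G}_n^{(m-k)\top}(s_k-s_{k-1})}\{I_{n+1}\otimes^{m-k}\vec{p}_{m-k}\}$) are linear maps independent of $Y(t)$. Since differentiation is linear and commutes with any fixed linear map, one has $\partial_{Y(t)}\big(vec(X_n^{(m)}(Y(t)))\big)=vec\big(\partial_{Y(t)}X_n^{(m)}(Y(t))\big)$, and likewise $vec^{-1}$ and the constant matrix exponential pass through the derivative, because $vec$ and $vec^{-1}$ merely relabel entries and the exponential is a constant-coefficient matrix. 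Applying this observation termwise to the formula of Theorem \ref{theoremformula} yields exactly the claimed expression, with $X_n^{(m)}(Y(t))$ replaced by $\partial_{Y(t)} X_n^{(m)}(Y(t))$.

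It then remains to identify $\partial_{Y(t)} X_n^{(m)}(Y(t))$. Here I would use the recursive description $X_n^{(r)}(x)=H_n(x)^{\top}\otimes X_n^{(r-1)}(x)$, which follows from Definition \ref{mkrondef2} and equation \eqref{Xnm} together with the associativity of the Kronecker product. The key technical point is the Leibniz rule for the Kronecker product: since every entry of $A(x)\otimes B(x)$ is a product $a_{ij}(x)b_{kl}(x)$, the ordinary product rule applied entrywise gives $\frac{d}{dx}\big(A(x)\otimes B(x)\big)=A'(x)\otimes B(x)+A(x)\otimes B'(x)$. Applying this to $X_n^{(m)}(x)=H_n(x)^{\top}\otimes X_n^{(m-1)}(x)$ produces the stated recursion, and the base case follows by direct differentiation of $H_n(x)=(1,x,\dots,x^n)^{\top}$, giving $\partial_x H_n(x)=(0,1,2x,\dots,nx^{n-1})^{\top}$, which is the vector displayed in the statement.

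The main obstacle is bookkeeping rather than conceptual: I would need to verify carefully that differentiation genuinely commutes with $vec$ and $vec^{-1}$ (immediate, as these only permute coordinates) and that the Kronecker Leibniz rule is applied to the correct factorization $H_n^{\top}\otimes X_n^{(m-1)}$ rather than to $(H_n^{\top})^{\otimes m}\otimes H_n$ directly, so that the recursion closes on $X_n^{(m-1)}$ and its derivative exactly as written. No analytic difficulty arises because all entries are polynomials in $Y(t)$, so termwise differentiation is unconditionally valid.
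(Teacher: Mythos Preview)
Your proposal is correct and follows essentially the same approach as the paper: both observe that the only $Y(t)$-dependence in the correlator formula lies in $X_n^{(m)}(Y(t))$, so that the derivative passes through all the surrounding linear operations, and then use the associativity factorization $X_n^{(m)}(x)=H_n(x)^{\top}\otimes X_n^{(m-1)}(x)$ together with the product rule for the Kronecker product to obtain the stated recursion, with the base case given by $\partial_x H_n(x)=(0,1,2x,\dots,nx^{n-1})^{\top}$. Your write-up is, if anything, slightly more detailed than the paper's in justifying the commutation of $\partial_{Y(t)}$ with $vec$, $vec^{-1}$, and the constant matrix factors.
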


Similarly, to compute the Theta of the Asian option, we need first to compute the derivative of $C_{p_0,\dots, p_m}(s_0,\dots, s_m;t)$ with respect to the $m+1$ time points involved, namely $s_0 <s_1<\dots < s_m$.

\begin{proposition}
	\label{proptheta}
	For every $m\ge1$, in the same notation of Theorem \ref{theoremformula}, we have that
	\begin{align*}
		& \Theta_0=\vec{p}_{m}^{\top}\left\{ vec^{-1} \circ \tilde{G}_n^{(m)}e^{\tilde{G}_n^{(m)}(s_0-t)}\circ vec\left(X_n^{(m)}(Y(t))\right)\right\} \prod_{k=1}^{m}e^{\tilde{G}_n^{(m-k)\top}(s_k-s_{k-1})}\left\{I_{n+1}\otimes^{m-k}\vec{p}_{m-k} \right\}+\\
		&\quad -\vec{p}_{m}^{\top}\left\{ vec^{-1} \circ e^{\tilde{G}_n^{(m)}(s_0-t)}\circ vec\left(	X_n^{(m)}(Y(t))\right)\right\} \tilde{G}_n^{(m-1)\top}\prod_{k=1}^{m}e^{\tilde{G}_n^{(m-k)\top}(s_k-s_{k-1})}\left\{I_{n+1}\otimes^{m-k}\vec{p}_{m-k} \right\};\\
		& \Theta_j =
		\vec{p}_{m}^{\top}\left\{ vec^{-1} \circ e^{\tilde{G}_n^{(m)}(s_0-t)}\circ vec\left(X_n^{(m)}(Y(t))\right)\right\}\cdot \\&\quad \cdot \left( \prod_{k=1}^{j-1}e^{\tilde{G}_n^{(m-k)\top}(s_k-s_{k-1})}\left\{I_{n+1}\otimes^{m-k}\vec{p}_{m-k} \right\}\tilde{G}_n^{(m-j)\top}e^{\tilde{G}_n^{(m-j)\top}(s_j-s_{j-1})} \left\{I_{n+1}\otimes^{m-j}\vec{p}_{m-j} \right\}\cdot \right.\\
		&\qquad \qquad \qquad \qquad \qquad\cdot \prod_{k=j+1}^{m}e^{\tilde{G}_n^{(m-k)\top}(s_k-s_{k-1})}\left\{I_{n+1}\otimes^{m-k}\vec{p}_{m-k} \right\}+\\
		& \quad -\prod_{k=1}^{j}e^{\tilde{G}_n^{(m-k)\top}(s_k-s_{k-1})}\left\{I_{n+1}\otimes^{m-k}\vec{p}_{m-k} \right\}\tilde{G}_n^{(m-j-1)\top}e^{\tilde{G}_n^{(m-j-1)\top}(s_{j+1}-s_{j})} \left\{I_{n+1}\otimes^{m-j-1}\vec{p}_{m-j-1} \right\}\cdot \\
		&\qquad \qquad \qquad \qquad \qquad\left.\cdot \prod_{k=j+2}^{m}e^{\tilde{G}_n^{(m-k)\top}(s_k-s_{k-1})}\left\{I_{n+1}\otimes^{m-k}\vec{p}_{m-k} \right\}\right) \quad \mbox{for } 1\le j<m ;\\
		&  \Theta_m=\vec{p}_{m}^{\top}\left\{ vec^{-1} \circ e^{\tilde{G}_n^{(m)}(s_0-t)}\circ vec\left(	X_n^{(m)}(Y(t))\right)\right\}\cdot \\ & \quad \cdot \prod_{k=1}^{m-1}e^{\tilde{G}_n^{(m-k)\top}(s_k-s_{k-1})}\left\{I_{n+1}\otimes^{m-k}\vec{p}_{m-k} \right\}\tilde{G}_n^{(0)\top}e^{\tilde{G}_n^{(0)\top}(s_m-s_{m-1})}\left\{I_{n+1}\otimes^{0}\vec{p}_{0} \right\},
	\end{align*}
where we have introduced the compact notation $\Theta_j := \frac{\partial C_{p_0,\dots, p_m}(s_0,\dots, s_m;t)}{\partial s_j}$ for $0\le j\le m.$
\end{proposition}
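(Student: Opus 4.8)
The plan is to differentiate the closed-form correlator of Theorem~\ref{theoremformula} directly in each time variable $s_j$. The key structural fact is that the matrices $\tilde{G}_n^{(r)}$, the vectors $\vec{p}_k$, and the matrix $X_n^{(m)}(Y(t))$ are all independent of the time points $s_0,\dots,s_m$; only the scalar arguments of the matrix exponentials carry this dependence. Thus the computation reduces to the elementary identity $\partial_u e^{Au} = A e^{Au} = e^{Au} A$ (valid since $A$ commutes with $e^{Au}$), together with the product rule applied across the $m+1$ exponential factors of the formula.

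First I would locate, for each index $j$, which exponentials depend on $s_j$ and with which sign. Reading off Theorem~\ref{theoremformula}: the variable $s_0$ appears with a $+$ sign in the leading exponential $e^{\tilde{G}_n^{(m)}(s_0-t)}$ and with a $-$ sign in the $k=1$ factor $e^{\tilde{G}_n^{(m-1)\top}(s_1-s_0)}$; for $1\le j<m$ the variable $s_j$ appears with a $+$ sign in the $k=j$ factor $e^{\tilde{G}_n^{(m-j)\top}(s_j-s_{j-1})}$ and with a $-$ sign in the $k=j+1$ factor $e^{\tilde{G}_n^{(m-j-1)\top}(s_{j+1}-s_j)}$; and $s_m$ appears only in the last factor $e^{\tilde{G}_n^{(0)\top}(s_m-s_{m-1})}$, with a $+$ sign. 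This immediately explains the count of summands: two for $\Theta_0$ and for each interior $\Theta_j$, and a single one for $\Theta_m$.

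Next I would differentiate each affected exponential in place. Because $\tilde{G}_n^{(r)}$ commutes with $e^{\tilde{G}_n^{(r)}t}$, differentiating a factor $e^{\tilde{G}_n^{(r)\top}(s_k-s_{k-1})}$ with respect to the time point entering with a $+$ sign reproduces that factor pre-multiplied by $\tilde{G}_n^{(r)\top}$, while differentiating with respect to the point entering with a $-$ sign reproduces it pre-multiplied by $-\tilde{G}_n^{(r)\top}$. For the leading factor, which sits inside the composition $vec^{-1}\circ(\cdot)\circ vec$, the derivative in $s_0$ inserts $\tilde{G}_n^{(m)}$ to the left of $e^{\tilde{G}_n^{(m)}(s_0-t)}$; here the linearity of $vec^{-1}$ and $vec$ lets the derivative pass through the composition unaltered. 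Summing the resulting contributions according to the product rule, with the signs identified above, produces the three displayed expressions.

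The main obstacle is organisational rather than conceptual: since $\prod_{k=1}^{m}$ is a noncommutative ordered product, I must verify that each inserted matrix lands in the correct slot. Concretely, for interior $j$ the factor $\tilde{G}_n^{(m-j)\top}$ must be placed immediately to the left of its own exponential $e^{\tilde{G}_n^{(m-j)\top}(s_j-s_{j-1})}$, and the factor $-\tilde{G}_n^{(m-j-1)\top}$ immediately to the left of $e^{\tilde{G}_n^{(m-j-1)\top}(s_{j+1}-s_j)}$. Splitting the product as $\prod_{k=1}^{j-1}$, the differentiated factor, and $\prod_{k=j+1}^{m}$ for the first summand (respectively $\prod_{k=1}^{j}$ and $\prod_{k=j+2}^{m}$ for the second) makes these placements explicit and reproduces the stated formulas verbatim; the boundary cases $j=0$ and $j=m$ follow in the same way from the single- or double-occurrence structure noted above.
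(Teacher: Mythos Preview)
Your proposal is correct and follows essentially the same approach as the paper: identify, for each $j$, the one or two exponential factors in the correlator formula that carry $s_j$ (with the appropriate sign), then apply the product rule together with $\partial_u e^{Au}=Ae^{Au}$ to obtain the stated expressions. Your write-up is in fact more explicit than the paper's own proof about the commutation of $\tilde{G}_n^{(r)}$ with its exponential and the placement of the inserted matrices in the noncommutative product, but the underlying argument is the same.
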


Then, the Delta of the Asian option is obtained by Proposition \ref{propdelta}:
\begin{equation*}
	\frac{\partial \Pi(t)}{\partial Y(t)} \approx  e^{-r(T-t)}\, \sum_{\boldsymbol{k}}\alpha_{\boldsymbol{k}}\frac{\partial C^{k_0,\dots, k_m}(s_0,\dots, s_m;t)}{\partial Y(t)}
\end{equation*}
and, similarly, the Theta is obtained by Proposition \ref{proptheta}:
\begin{equation*}
	\frac{\partial \Pi(t)}{\partial s_j} \approx  e^{-r(T-t)}\, \sum_{\boldsymbol{k}}\alpha_{\boldsymbol{k}}\frac{\partial C^{k_0,\dots, k_m}(s_0,\dots, s_m;t)}{\partial s_j} \quad \mbox{for } 0\le j \le m,
\end{equation*}
for certain coefficients $\{\alpha_{\boldsymbol{k}}\}_{\boldsymbol{k}}$ and $\boldsymbol{k} = (k_1, \cdots, k_m)$ a multi-index. A more detailed analysis for Greeks of discrete average arithmetic Asian options can be found in \cite{mio2021}, where the coefficients $\{\alpha_{\boldsymbol{k}}\}_{\boldsymbol{k}}$ are computed explicitly with respect to the basis of  Hermite polynomials.

\subsection{Numerical performances}
We analyse numerically the correlator formula in Theorem \ref{theoremformula} which explicitly gives the value for correlators, up to the computation of the exponential of the $m+1$ matrices $\tilde{G}_n^{(r)}$, $r=0, \dots, m$. This means that, assuming these matrix exponentials to be exact, the correlator formula provides \textit{the} correlator value. The same exact value can be also obtained by applying iteratively the moment formula, as we discussed in Section \ref{intropol}. We then compare from a time cost point of view these two procedures. In particular, we consider both an implementation with dense matrices and an implementation with sparse matrices. Finally, we consider a Monte Carlo approach which is compared with our correlator formula both from a time cost and an accuracy point of views.

For the experiments, we consider an Ornstein--Uhlenbeck process $Y$ defined by
\begin{equation}
	\label{OUp}
	dY(t) = (b_0+b_1Y(t))\,dt + \sqrt{\sigma_0}\,dB(t)
\end{equation}
and with model specifications
\begin{center}
	\begin{tabular}[h]{|l}
		$b_0 = +0.75$\\
		$b_1 = -5.00$\\
		$\sigma_0 = +0.01$\\
		$Y(t=0)=+0.15$\\
	\end{tabular}
\end{center}
which corresponds to a polynomial diffusion process with $\sigma_1=\sigma_2=0$ and $\ell(x, dz)\equiv0$ (see equation \eqref{poldefcond}). Moreover, for $n\ge 1$, we consider a particular case of equation \eqref{corrn} of the form
\begin{equation*}
	\label{corrn3}
	C^{n}(s_0, \dots, s_m; t) := \E\left[Y(s_0)^{n}Y(s_1)^{n}\cdot \dots \cdot Y(s_m)^{n}\left.\right|\F_t\right]
\end{equation*} 
corresponding to $p_k(x) = x^{n}=  \vec{e}_{n+1,n+1}^{\top}H_{n}(x)$, $k=0, \dots, m$. We found terms of this form in Section \ref{pathdependent} when motivating the study of correlators for pricing a financial derivative in a stochastic volatility model context. The two cases coincide for $n=1$. We also mention that Ornstein--Uhlenbeck processes are common models in finance. Among others, we find examples in modelling the electricity spot price, a non-Gaussian example being treated in \cite{benth2007}.

The Monte Carlo simulations are based on the conditional solution of equation \eqref{OUp} given by
\begin{equation*}
	\label{OUsol}
	Y(s) = Y(t) \,e^{b_1(s-t)} + \frac{b_0}{b_1}\left(e^{b_1(s-t)}-1\right)+\sqrt{\sigma_0}\int_t^se^{b_1(s-v)}dB(v), \qquad \mbox{for } s \ge t \ge 0.
\end{equation*}
We define $\Delta_k := s_k-s_{k-1}$, $k=0,\dots, m$, with $s_{-1}:=t$, and rewrite it for $s=s_k$ and $t=s_{k-1}$, namely
\begin{equation}
	\label{meanvar}
	Y(s_k) = Y(s_{k-1}) \,e^{b_1\Delta_k} + \frac{b_0}{b_1}\left(e^{b_1\Delta_k}-1\right)+\sqrt{\sigma_0}\int_{s_{k-1}}^{s_k}e^{b_1(s_k-v)}dB(v).
\end{equation}
For a fixed number of time points $m+1\ge 1$, the idea is then to sequentially simulate samples from $Y(s_k)$ according to \eqref{meanvar}, using $Y(s_{k-1})$ as starting point and the fact that, by the tower rule, it holds
\begin{equation*}
	C^{n}(s_0, \dots, s_m; t)=
	\E\left[\left.Y(s_0)^{n}\E\left[\left.Y(s_1)^{n}\E\left[\left.Y(s_2)^{n}\cdots \E\left[\left.Y(s_m)^{n}\right|\F_{s_{m-1}}\right]  \cdots  \right|\F_{s_1}\right]\right|\F_{s_0}\right] \right|\F_t\right].
\end{equation*} 
When increasing the complexity of the problem, namely, increasing $n$ and/or $m$, the Monte Carlo approach needs more simulations to gain accuracy, requiring also more time for computations. However, in order to compare different experiments, we fix the number of simulations to $N=10^4$, each of which is repeated $10^2$ times so that to get multiple values. Among these values, we select the worst one in terms of highest relative error with respect to the correlator formula, and we use it as representative of the set. For the time cost assessment, the representative is obtained by averaging the time cost required for the $10^2$ simulations. Finally, we set a tolerance to $1 \cdot 10^{-3}$ and claim that Monte Carlo \emph{fails} if the relative error is bigger than that, counting the number of failures over the $10^2$ simulations.

In Table \ref{table_time} we report the outcomes of the time-cost experiments. Here we compare the correlator formula, both with dense (Dense) and sparse (Sparse) matrices, with the iterative application of the moment formula, both with dense (Iter. dense) and sparse (Iter. sparse) matrices, and with the Monte Carlo approach (MC average). We notice that the correlator formula with dense matrices is almost comparable with the iteration with dense matrices, and the same holds with sparse matrices. However, for higher complexities, the correlator formula tends to be a bit slower. In particular, we point out that, despite what one might expect, using sparse matrices makes both the approaches slower. The main reason lies in the fact that the matrix exponential of a sparse matrix is likely not to be sparse. Hence, using a sparse matrix for what is a dense matrix instead slows down computations. However, we stress the fact that sparse matrices are crucial when increasing further the complexity of the problem. We remember indeed that, for a given $n$ and $m$, the generator matrix has dimension $(n(m+1)+1)\times(n(m+1)+1)$, so that it is not feasible to store it with a dense matrix.

We also observe from Table \ref{table_time} that the time cost for these four experiments increases when the complexity of the problem increases, as one would expect due to the dimensions of the matrices involved. However, for a fixed $m$, the time cost for the Monte Carlo approach (MC average) is almost invariant reflecting the fact that the number of time points is fixed. However, to keep the approach as general as possible, instead of computing the power $x^n$ directly we do the vector multiplication $\vec{e}_{n+1, n+1}^{\top}H_n(x)$ to reflect the situation that we would get if considering general polynomial function instead of monomials. This explains why, for a fixed $m$, when increasing the power $n$ the time cost also increases slightly.
 
In Table \ref{table_value} we report the outcomes from the accuracy tests between the correlator formula and the Monte Carlo approach. We mention indeed that the four experiments previously discussed lead to the exact same values, as one would expect since they are basically four different representations for the same exact computation. Comparing the values from the correlator formula (Formula value) with the worst over the $10^2$ outcomes of the Monte Carlo simulations (MC worst value), we notice that when increasing the complexity, the second method becomes worse and worse in terms of relative error (in parenthesis). Of course, increasing the number $N$ of simulations, one can aim at better results, but this would also mean higher time costs. In the last column (MC fails) we also notice that the number of failures in terms of the tolerance defined above increases up to almost all the calls of the method.

\begin{table}[tbp]
\begin{center}
\setlength{\tabcolsep}{3pt}
\begin{tabular}{llllllll}
\multicolumn{7}{c}{\Large \textbf{Time-cost performances}}\\[2pt]
\toprule
$\boldsymbol{m}$& $\boldsymbol{n}$&\textbf{Dense} & \textbf{Sparse} &\textbf{Iter. dense}  &\textbf{Iter. sparse} & \textbf{MC average}\\
\midrule
$ 0$ & $ 1$ & $3.372\mathrm{e}^{-04}$ & $5.254\mathrm{e}^{-03} (\approx  16\mathrm{x})$ & $3.051\mathrm{e}^{-04}(\approx   1\mathrm{x})$ & $4.904\mathrm{e}^{-03}(\approx  15\mathrm{x})$ & $1.212\mathrm{e}^{-01}(\approx 359\mathrm{x})$ \\
$ 0$ & $ 2$ & $3.451\mathrm{e}^{-04}$ & $6.700\mathrm{e}^{-03} (\approx  19\mathrm{x})$ & $3.384\mathrm{e}^{-04}(\approx   1\mathrm{x})$ & $6.199\mathrm{e}^{-03}(\approx  18\mathrm{x})$ & $1.233\mathrm{e}^{-01}(\approx 357\mathrm{x})$ \\
$ 0$ & $ 3$ & $3.612\mathrm{e}^{-04}$ & $7.879\mathrm{e}^{-03} (\approx  22\mathrm{x})$ & $3.340\mathrm{e}^{-04}(\approx   1\mathrm{x})$ & $7.459\mathrm{e}^{-03}(\approx  21\mathrm{x})$ & $1.281\mathrm{e}^{-01}(\approx 355\mathrm{x})$ \\
$ 0$ & $ 4$ & $3.819\mathrm{e}^{-04}$ & $9.242\mathrm{e}^{-03} (\approx  24\mathrm{x})$ & $3.747\mathrm{e}^{-04}(\approx   1\mathrm{x})$ & $8.932\mathrm{e}^{-03}(\approx  23\mathrm{x})$ & $1.317\mathrm{e}^{-01}(\approx 345\mathrm{x})$ \\
$ 0$ & $ 5$ & $4.053\mathrm{e}^{-04}$ & $1.072\mathrm{e}^{-02} (\approx  26\mathrm{x})$ & $3.919\mathrm{e}^{-04}(\approx   1\mathrm{x})$ & $1.028\mathrm{e}^{-02}(\approx  25\mathrm{x})$ & $1.360\mathrm{e}^{-01}(\approx 336\mathrm{x})$ \\
$ 0$ & $10$ & $4.916\mathrm{e}^{-04}$ & $1.676\mathrm{e}^{-02} (\approx  34\mathrm{x})$ & $4.883\mathrm{e}^{-04}(\approx   1\mathrm{x})$ & $1.651\mathrm{e}^{-02}(\approx  34\mathrm{x})$ & $1.570\mathrm{e}^{-01}(\approx 319\mathrm{x})$ \\
\midrule
$ 1$ & $ 1$ & $6.383\mathrm{e}^{-04}$ & $1.424\mathrm{e}^{-02} (\approx  22\mathrm{x})$ & $6.296\mathrm{e}^{-04}(\approx   1\mathrm{x})$ & $1.246\mathrm{e}^{-02}(\approx  20\mathrm{x})$ & $2.186\mathrm{e}^{-01}(\approx 342\mathrm{x})$ \\
$ 1$ & $ 2$ & $7.363\mathrm{e}^{-04}$ & $1.952\mathrm{e}^{-02} (\approx  27\mathrm{x})$ & $7.058\mathrm{e}^{-04}(\approx   1\mathrm{x})$ & $1.824\mathrm{e}^{-02}(\approx  25\mathrm{x})$ & $2.277\mathrm{e}^{-01}(\approx 309\mathrm{x})$ \\
$ 1$ & $ 3$ & $7.926\mathrm{e}^{-04}$ & $2.449\mathrm{e}^{-02} (\approx  31\mathrm{x})$ & $8.393\mathrm{e}^{-04}(\approx   1\mathrm{x})$ & $2.451\mathrm{e}^{-02}(\approx  31\mathrm{x})$ & $2.371\mathrm{e}^{-01}(\approx 299\mathrm{x})$ \\
$ 1$ & $ 4$ & $8.737\mathrm{e}^{-04}$ & $2.993\mathrm{e}^{-02} (\approx  34\mathrm{x})$ & $8.893\mathrm{e}^{-04}(\approx   1\mathrm{x})$ & $3.064\mathrm{e}^{-02}(\approx  35\mathrm{x})$ & $2.489\mathrm{e}^{-01}(\approx 285\mathrm{x})$ \\
$ 1$ & $ 5$ & $9.278\mathrm{e}^{-04}$ & $3.428\mathrm{e}^{-02} (\approx  37\mathrm{x})$ & $9.001\mathrm{e}^{-04}(\approx   1\mathrm{x})$ & $3.759\mathrm{e}^{-02}(\approx  41\mathrm{x})$ & $2.572\mathrm{e}^{-01}(\approx 277\mathrm{x})$ \\
$ 1$ & $10$ & $1.354\mathrm{e}^{-03}$ & $6.013\mathrm{e}^{-02} (\approx  44\mathrm{x})$ & $1.975\mathrm{e}^{-03}(\approx   1\mathrm{x})$ & $7.980\mathrm{e}^{-02}(\approx  59\mathrm{x})$ & $3.002\mathrm{e}^{-01}(\approx 222\mathrm{x})$ \\
\midrule
$ 2$ & $ 1$ & $1.064\mathrm{e}^{-03}$ & $2.549\mathrm{e}^{-02} (\approx  24\mathrm{x})$ & $9.527\mathrm{e}^{-04}(\approx   1\mathrm{x})$ & $2.105\mathrm{e}^{-02}(\approx  20\mathrm{x})$ & $3.213\mathrm{e}^{-01}(\approx 302\mathrm{x})$ \\
$ 2$ & $ 2$ & $1.226\mathrm{e}^{-03}$ & $3.622\mathrm{e}^{-02} (\approx  30\mathrm{x})$ & $1.116\mathrm{e}^{-03}(\approx   1\mathrm{x})$ & $3.424\mathrm{e}^{-02}(\approx  28\mathrm{x})$ & $3.319\mathrm{e}^{-01}(\approx 271\mathrm{x})$ \\
$ 2$ & $ 3$ & $1.385\mathrm{e}^{-03}$ & $4.746\mathrm{e}^{-02} (\approx  34\mathrm{x})$ & $1.122\mathrm{e}^{-03}(\approx   1\mathrm{x})$ & $4.756\mathrm{e}^{-02}(\approx  34\mathrm{x})$ & $3.488\mathrm{e}^{-01}(\approx 252\mathrm{x})$ \\
$ 2$ & $ 4$ & $1.588\mathrm{e}^{-03}$ & $5.943\mathrm{e}^{-02} (\approx  37\mathrm{x})$ & $1.377\mathrm{e}^{-03}(\approx   1\mathrm{x})$ & $6.340\mathrm{e}^{-02}(\approx  40\mathrm{x})$ & $3.658\mathrm{e}^{-01}(\approx 230\mathrm{x})$ \\
$ 2$ & $ 5$ & $1.787\mathrm{e}^{-03}$ & $7.125\mathrm{e}^{-02} (\approx  40\mathrm{x})$ & $1.493\mathrm{e}^{-03}(\approx   1\mathrm{x})$ & $7.975\mathrm{e}^{-02}(\approx  45\mathrm{x})$ & $3.727\mathrm{e}^{-01}(\approx 209\mathrm{x})$ \\
$ 2$ & $10$ & $7.786\mathrm{e}^{-03}$ & $1.283\mathrm{e}^{-01} (\approx  16\mathrm{x})$ & $3.979\mathrm{e}^{-03}(\approx   1\mathrm{x})$ & $1.870\mathrm{e}^{-01}(\approx  24\mathrm{x})$ & $4.380\mathrm{e}^{-01}(\approx  56\mathrm{x})$ \\
\midrule
$ 3$ & $ 1$ & $1.611\mathrm{e}^{-03}$ & $3.930\mathrm{e}^{-02} (\approx  24\mathrm{x})$ & $1.238\mathrm{e}^{-03}(\approx   1\mathrm{x})$ & $3.261\mathrm{e}^{-02}(\approx  20\mathrm{x})$ & $4.216\mathrm{e}^{-01}(\approx 262\mathrm{x})$ \\
\midrule
$ 4$ & $ 1$ & $2.101\mathrm{e}^{-03}$ & $5.545\mathrm{e}^{-02} (\approx  26\mathrm{x})$ & $1.474\mathrm{e}^{-03}(\approx   1\mathrm{x})$ & $4.468\mathrm{e}^{-02}(\approx  21\mathrm{x})$ & $5.259\mathrm{e}^{-01}(\approx 250\mathrm{x})$ \\
\midrule
$ 5$ & $ 1$ & $2.910\mathrm{e}^{-03}$ & $7.554\mathrm{e}^{-02} (\approx  26\mathrm{x})$ & $1.887\mathrm{e}^{-03}(\approx   1\mathrm{x})$ & $6.005\mathrm{e}^{-02}(\approx  21\mathrm{x})$ & $6.175\mathrm{e}^{-01}(\approx 212\mathrm{x})$ \\
\midrule
$10$ & $ 1$ & $6.423\mathrm{e}^{-02}$ & $2.302\mathrm{e}^{-01} (\approx   4\mathrm{x})$ & $4.005\mathrm{e}^{-03}(\approx   0\mathrm{x})$ & $1.634\mathrm{e}^{-01}(\approx   3\mathrm{x})$ & $1.112\mathrm{e}^{+00}(\approx  17\mathrm{x})$ \\
\bottomrule
\end{tabular}
\end{center}
\caption{Time-cost performances for the correlator formula with dense (\textbf{Dense}) and sparse (\textbf{Sparse}) matrices, for the iterative application of the moment formula with dense (\textbf{Iter. dense}) and sparse (\textbf{Iter. sparse}) matrices, and for the the Monte Carlo approach (\textbf{MC average}). In parenthesis, how many times the different approaches are slower than the correlator formula with dense matrices.
\label{table_time}
}
\end{table}

\begin{table}[tb]
\begin{center}
\setlength{\tabcolsep}{3pt}
\begin{tabular}{llccr}
\multicolumn{5}{c}{\Large \textbf{Accuracy performances}}\\[2pt]
\toprule
$\boldsymbol{m}$& $\boldsymbol{n}$&\textbf{Formula value}  &\textbf{MC worst value}  &  \textbf{MC fails}\\
\midrule
$ 0$ & $ 1$ & $1.500\mathrm{e}^{-01}$ & $1.490\mathrm{e}^{-01} \; (0.66\%)$ & $66/100$\\
$ 0$ & $ 2$ & $2.350\mathrm{e}^{-02}$ & $2.378\mathrm{e}^{-02}  \; ( 1.18\%)$ &  $77/100$\\
$ 0$ & $ 3$ & $3.825\mathrm{e}^{-03}$  & $3.751\mathrm{e}^{-03} \; (1.93\%)$ &  $91/100$\\
$ 0$ & $ 4$ & $6.442\mathrm{e}^{-04}$  & $6.567\mathrm{e}^{-04}  \; ( 1.94\%)$ &  $93/100$\\
$ 0$ & $ 5$ & $1.119\mathrm{e}^{-04}$  & $1.092\mathrm{e}^{-04} \; ( 2.43\%)$ &  $92/100$\\
$ 0$ & $10$ & $2.618\mathrm{e}^{-08}$  & $2.828\mathrm{e}^{-08}  \; ( 8.02\%)$ &  $98/100$\\
\midrule
$ 1$ & $ 1$ & $2.258\mathrm{e}^{-02}$  & $2.278\mathrm{e}^{-02}  \; ( 0.88\%)$ &  $79/100$\\
$ 1$ & $ 2$ & $5.594\mathrm{e}^{-04}$  & $5.505\mathrm{e}^{-04} \; ( 1.60\%)$ &  $94/100$\\
$ 1$ & $ 3$ & $1.503\mathrm{e}^{-05}$   & $1.460\mathrm{e}^{-05} \; ( 2.82\%)$ &  $91/100$\\
$ 1$ & $ 4$ & $4.338\mathrm{e}^{-07}$   & $4.164\mathrm{e}^{-07} \; ( 4.00\%)$ &  $95/100$\\
$ 1$ & $ 5$ & $1.337\mathrm{e}^{-08}$   & $1.268\mathrm{e}^{-08}  \; ( 5.09\%)$ &  $97/100$\\
$ 1$ & $10$ & $8.366\mathrm{e}^{-16}$  & $6.916\mathrm{e}^{-16} \; ( 17.3\%)$ &  $99/100$\\
\midrule
$ 2$ & $ 1$ & $3.436\mathrm{e}^{-03}$   & $3.467\mathrm{e}^{-03} \; ( 0.92\%)$ &  $81/100$\\
$ 2$ & $ 2$ & $1.383\mathrm{e}^{-05}$  & $1.346\mathrm{e}^{-05}\; ( 2.62\%)$ &  $92/100$\\
$ 2$ & $ 3$ & $6.372\mathrm{e}^{-08}$  & $6.662\mathrm{e}^{-08} \; ( 4.55\%)$ &  $95/100$\\
$ 2$ & $ 4$ & $3.310\mathrm{e}^{-10}$ & $3.529\mathrm{e}^{-10} \; ( 6.60\%)$ &  $98/100$\\
$ 2$ & $ 5$ & $1.914\mathrm{e}^{-12}$ & $1.742\mathrm{e}^{-12} \; ( 9.00\%)$ &  $99/100$\\
$ 2$ & $10$ & $4.638\mathrm{e}^{-23}$   & $8.030\mathrm{e}^{-23} \; ( 73.2\%)$ &  $100/100$\\
\midrule
$ 3$ & $ 1$ & $5.291\mathrm{e}^{-04}$  & $5.352\mathrm{e}^{-04} \; ( 1.15\%)$ &  $86/100$\\
\midrule
$ 4$ & $ 1$ & $8.245\mathrm{e}^{-05}$ & $8.097\mathrm{e}^{-05} \; ( 1.80\%)$ &  $87/100$\\
\midrule
$ 5$ & $ 1$ & $1.300\mathrm{e}^{-05}$   & $1.326\mathrm{e}^{-05} \; ( 2.02\%)$ &  $88/100$\\
\midrule
$10$ & $ 1$ & $1.477\mathrm{e}^{-09}$  & $1.413\mathrm{e}^{-09}\; ( 4.36\%)$ &  $91/100$\\
\bottomrule
\end{tabular}
\end{center}
\caption{Accuracy performances for the correlator formula (\textbf{Formula value}) compared with a Monte Carlo approach (\textbf{MC worst value}). In parenthesis, the relative error with respect to the correlator formula. We then report the ratio of values with relative error exceeding the tolerance (\textbf{MC fails}). 
\label{table_value}
}
\end{table}

\begin{remark}
	As observed above, the correlator $C^{n}(s_0, \dots, s_m; t)$ coincides for $n=1$ with the terms encountered in Section~\ref{pathdependent} in the setting of a stochastic volatility model. We then chose the parameters for the model in equation \eqref{OUp} so that to make it relevant in view of that application. However, the choice of a Gaussian Ornstein--Uhlenbeck process is for simplicity and illustration, and not intended as a precise stochastic volatility model. We also point out that in equation \eqref{intcorr2} one has in practise to truncate the summation to a certain $\bar{k}\in \N$ to get an approximated option price. From Table \ref{table_value}, we notice that for $n=1$ the correlator values are very small in this configuration. This means that $\bar{k}$ can be chosen small, possibly not bigger than 5. For small values of $k$ (that means, small values of $m$), the iterated integral of the correlator formula (see the terms in equation \eqref{intcorr2}) can be computed by hand, avoiding additional error due to some multidimensional integration technique necessary otherwise.
\end{remark}

\section{Conclusions}
\label{conc}
We have derived an explicit formula for computing correlators of polynomial processes consisting of linear combinations of exponentials of the generator matrix associated with the polynomial process. Our analysis is based on a recursive use of the moment formula for conditional expectations of polynomial processes along with the introduction of two new linear operators, respectively, the L-eliminating and the L-duplicating matrices. The closed-form expression of the correlators of polynomial processes is attractive in studies of options and risk management. The connection to Hankel matrices open our studies of correlators of polynomial processes to other areas as well. 

We want to stress that a closed formula allows to make analysis with respect to the variables and parameters involved. The correlator formula in Theorem \ref{theoremformula} depends on the polynomial jump-diffusion coefficients, $b$, $\sigma$ and $\xi$, as introduced in equation \eqref{poldefcond}. But it also depends on the time points $t < s_0 < s_1 < \cdots < s_m<T<+\infty$. This fact may be exploited for option price analysis as we demonstrate. Indeed, the correlator formula allows for explicit computation of derivatives, and lend themselves to the calculation of option Greeks.

\paragraph*{Acknowledgements} Christa Cuchiero is thanked for interesting discussions. We are also grateful for the careful reading and the suggestions of three anonymous referees, which have led to a significant improvement on the presentation of the paper.

\appendix
\section{Some combinatorial properties}
\label{appApol}
We report in this section some combinatorial properties concerning the L-eliminating and the L-duplicating matrices and the matrix of functions $X_n^{(r)}(x)$. However, these are not necessary to understand the main part of the article. 

\subsection{The L-eliminating  and L-duplicating matrices}
The total number of elements of $E_{n,m}$ is equal to $nm(n+m-1)$. When multiplying $E_{n,m}$ with a matrix $A\in \R^{n\times m}$, we select the elements of $A$ which are in the first column and last row, that account for exactly $n+m-1$ terms. That means that $E_{n,m}$ must have exactly $n+m-1$ elements equal to $1$ and the rest must be zeros. Moreover, for $A\in \mathcal{A}_{n,m}$, the matrix $D_{n,m}$ duplicates each element $a_k$ in $vecL(A)$ as many times as the number of elements in the $k$-th skew-diagonal of $A$, $k=1, \dots, n+m-1$. Then in the $k$-th column of $D_{n,m}$ there are as many $1$'s as the number of elements in the $k$-th skew-diagonal of $A$, while the remaining elements are all $0$'s. We shall be more precise in the following lemmas.

\begin{lemma}
	\label{numlem}
	For $E_{n,m}\in \R^{(n+m-1)\times nm}$, the number of non-zero elements is $n+m-1$.
\end{lemma}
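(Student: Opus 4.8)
The plan is to exploit the fact that $E_{n,m}$ is a selection (or ``picking'') matrix: each of its rows simply reads off a single entry of $vec(A)$. First I would invoke the defining property \eqref{Enm2}, namely $E_{n,m}\,vec(A) = vecL(A)$, together with the observation that $vec:\R^{n\times m}\to\R^{nm}$ is a linear bijection (Definition \ref{vecD}), so that as $A$ ranges over $\R^{n\times m}$ the vector $vec(A)$ ranges over all of $\R^{nm}$. By Definition \ref{vecL}, the $k$-th entry of $vecL(A)$ is one specific entry $a_{i,j}$ of $A$, hence one specific coordinate of $vec(A)$, say coordinate $j_k$. Consequently the $k$-th row of $E_{n,m}$, applied to the arbitrary vector $vec(A)$, must return its $j_k$-th coordinate; since this identity holds for \emph{every} vector in $\R^{nm}$, the $k$-th row is forced to equal $\vec{e}_{nm,j_k}^{\top}$.

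The conclusion is then immediate: each of the $n+m-1$ rows of $E_{n,m}$ contains exactly one non-zero entry, equal to $1$, so the total number of non-zero elements is $n+m-1$. As a cross-check one can read the same count directly from the closed form \eqref{Enmex2}: the first sum contributes $n$ rank-one terms (for $i=1,\dots,n$) and the second contributes $m-1$ rank-one terms (for $i=2,\dots,m$), each being an outer product of standard basis vectors carrying a single $1$, for a total of $n+(m-1)=n+m-1$ ones.

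There is essentially no obstacle here, as the argument is structural rather than computational. The only point requiring a line of care arises if one pursues the direct route through \eqref{Enmex2}, namely the identification via the Kronecker structure of the exact row and column of the $1$ contributed by each summand: the terms with $i=1,\dots,n$ place their $1$ in rows $1,\dots,n$, while those with $i=2,\dots,m$ place theirs in rows $n+1,\dots,n+m-1$, so the two families of rows are disjoint and no two summands collide. Either route delivers the count $n+m-1$.
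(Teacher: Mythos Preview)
Your proposal is correct and essentially mirrors the paper's own argument, which is given informally in the paragraph preceding the lemma rather than as a formal proof: the paper observes that $E_{n,m}$ selects the $n+m-1$ entries of $A$ lying in the first column and last row, and concludes that $E_{n,m}$ must therefore contain exactly $n+m-1$ ones with all other entries zero. Your first route makes this precise by invoking the surjectivity of $vec$ to force each row to be a canonical basis vector, and your second route via the explicit formula \eqref{Enmex2} provides an independent verification; both are sound and in the same spirit as the paper.
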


\begin{lemma}
	\label{card}
	If $n\ne m$, then the number of $1$'s in the $k$-th column of $D_{n,m}$ is
	\begin{equation}
		\label{cases1}
		\begin{cases}
			k & \mbox{ for } 1\le k\le \min(n,m)-1\\
			\min(n,m)&  \mbox{ for } \min(n,m)\le k\le \max(n,m)-1\\
			n+m-k&  \mbox{ for } \max(n,m)\le k\le n+m-1
		\end{cases}.
	\end{equation}
	If $n=m$, then the following formula holds instead:
	\begin{equation}
		\label{cases2}
		\begin{cases}
			k & \mbox{ for } 1\le k\le n-1\\
			2n-k&  \mbox{ for } n\le k\le 2n-1
		\end{cases}.
	\end{equation}
	In particular, if $n= m$, then the number of $1$'s in the $k$-th column corresponds to the coefficient of the $(k-1)$-th power of $x$ in the power expansion $\left(\sum_{\alpha=0}^{n-1} x^{\alpha}\right)^2$.
\end{lemma}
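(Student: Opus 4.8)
The plan is to read the number of $1$'s in each column of $D_{n,m}$ directly from the explicit expression \eqref{Dnex2} and then reduce the problem to counting lattice points on a skew-diagonal. In the sum
\[
D_{n,m} = \sum_{i=1}^{n}\sum_{j=1}^{m} \vec{e}_{n+m-1,i+j-1}^{\top}\otimes\vec{e}_{m,j}\otimes \vec{e}_{n,i},
\]
each summand contributes a single $1$, located in the row indexed by the canonical vector $\vec{e}_{m,j}\otimes\vec{e}_{n,i}\in\R^{nm}$ (i.e.\ at the vectorization position $n(j-1)+i$) and in the column $k=i+j-1$. Since distinct pairs $(i,j)$ produce distinct row positions, no two summands overlap, so the number of $1$'s in the $k$-th column of $D_{n,m}$ is exactly the number of pairs $(i,j)$ with $1\le i\le n$, $1\le j\le m$ and $i+j-1=k$ --- equivalently, the number of entries of a Hankel matrix $A\in\mathcal{A}_{n,m}$ lying on its $k$-th skew-diagonal.

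Next I would put this count in closed form. Writing $j=k+1-i$, the constraint $1\le j\le m$ becomes $k+1-m\le i\le k$, which combined with $1\le i\le n$ gives $\max(1,k+1-m)\le i\le\min(n,k)$. Hence the number of $1$'s in column $k$ equals
\[
\min(n,k)-\max(1,k+1-m)+1,
\]
and this is positive for every $1\le k\le n+m-1$, so each column is nonempty.

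I would then split into the regimes appearing in \eqref{cases1}, using $\min(n,m)$ and $\max(n,m)$ as break-points. For $1\le k\le\min(n,m)-1$ one has $\min(n,k)=k$ and $\max(1,k+1-m)=1$, giving $k$; for $\min(n,m)\le k\le\max(n,m)-1$ the minimum saturates to $\min(n,m)$ while the maximum is still $1$, giving $\min(n,m)$; and for $\max(n,m)\le k\le n+m-1$ the count becomes $n+m-k$. The symmetric case $n=m$ simply collapses the middle regime, yielding \eqref{cases2}. This case analysis is the only step requiring care, but it is not a genuine obstacle: the sole subtlety is tracking which argument of each $\min$ and $\max$ is active in each regime, which is immediate once the break-points are fixed.

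Finally, for the claim when $n=m$, I would identify the skew-diagonal count with a convolution. Expanding $\bigl(\sum_{\alpha=0}^{n-1}x^{\alpha}\bigr)^2=\sum_{\alpha,\beta=0}^{n-1}x^{\alpha+\beta}$, the coefficient of $x^{k-1}$ counts the pairs $(\alpha,\beta)$ with $0\le\alpha,\beta\le n-1$ and $\alpha+\beta=k-1$. The substitution $i=\alpha+1$, $j=\beta+1$ shows this is precisely the number of pairs $(i,j)$ with $1\le i,j\le n$ and $i+j-1=k$, which by the reduction above is the number of $1$'s in the $k$-th column of $D_{n,n}$. This closes the lemma.
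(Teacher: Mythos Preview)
Your proof is correct. One small imprecision: in the middle regime $\min(n,m)\le k\le\max(n,m)-1$ your description ``the minimum saturates to $\min(n,m)$ while the maximum is still $1$'' tacitly assumes $n\le m$; when $m<n$ it is instead $\min(n,k)=k$ and $\max(1,k+1-m)=k+1-m$. The outcome $\min(n,m)$ is the same either way, and a one-line ``without loss of generality $n\le m$'' (justified by the symmetry of the pair count in $n$ and $m$) closes this cleanly.

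Your route differs from the paper's in two places. For the piecewise counts \eqref{cases1}--\eqref{cases2}, the paper essentially asserts that the column count equals the length of the $k$-th skew-diagonal and then checks only that the totals add up to $nm$; you actually derive the closed form from the explicit sum \eqref{Dnex2} via the lattice-point count $\min(n,k)-\max(1,k+1-m)+1$, which is more self-contained. For the polynomial-expansion statement, the paper proceeds by induction on $n$, expanding $\bigl(\sum_{\alpha=0}^{n}x^{\alpha}\bigr)^2$ from the $(n-1)$-case; you instead give a direct bijection $(\alpha,\beta)\mapsto(i,j)=(\alpha+1,\beta+1)$ between monomial pairs contributing to $x^{k-1}$ and entries on the $k$-th skew-diagonal. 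Your argument is shorter and avoids the inductive bookkeeping, while the paper's induction has the mild advantage of exhibiting how the coefficients change when the matrix dimension grows by one.
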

\begin{proof}
	Let $n\ne m$ and $A\in \mathcal{A}_{n,m}$. In the $k$-th column of $D_{n,m}$ there are as many $1$'s as the number of elements in the $k$-th skew-diagonal of $A$. Denoting with $a_k$ the value of the elements on the $k$-th skew-diagonal of $A$, $k=1,\dots, n+m-1$, equation \eqref{cases1} gives the cardinality of each $a_k$. Then the sum of the elements in equation \eqref{cases1} should give the total number of elements in $A$, that is $nm$:
	\begin{align*}
		\label{sums}
		&\sum_{k=1}^{\min(n,m)-1} k +\sum_{k=\min(n,m)}^{\max(n,m)-1}\min(n,m)+\sum_{k=\max(n,m)}^{n+m-1}(m+n-k)\\
		& = \frac{\min(n,m)\left(\min(n,m)-1\right)}{2} + \min(n,m)\left(\max(n,m)-\min(n,m)\right) + \frac{\min(n,m)\left(\min(n,m)+1\right)}{2}\\
		& = \max(n,m)\min(n,m)=nm,
	\end{align*}
	where for the third sum we used the change of variables $k':= m+n-k$ so that $\sum_{k=\max(n,m)}^{n+m-1}(m+n-k)=\sum_{k'=1}^{n+m-\max(m,n)}k'$, and the fact that
	$m+n-\max(n,m)=\min(n,m)$. The case $n=m$ is similar, so that the first part of the lemma is proved.
	
	We need now to prove that the numbers in equation \eqref{cases2} correspond to the coefficients in the power expansion $\left(\sum_{\alpha=0}^{n-1} x^{\alpha}\right)^2$. We proceed by induction on the matrix dimension $n\ge 2$ ($n=1$ is trivial).
	\begin{itemize}[leftmargin=*]
		\item $n=2$: a matrix $A \in \mathcal{A}_{2,2}$ is of the form $A=\begin{pmatrix}
			a_1 & a_2\\
			a_2 & a_3
		\end{pmatrix}$ and the cardinality of the $a_k$, $k=1,2,3$, is $1-2-1$, which correspond to the coefficients of the polynomial $\left(\sum_{\alpha=0}^{1} x^{\alpha}\right)^2= (1+x)^2 = 1+2x+x^2$.
		\item $n\rightarrow n+1$: we indicate with $A_n$ a general matrix in $\mathcal{A}_{n}$, and with $A_{n+1}$ a general matrix in $\mathcal{A}_{n+1}$. Then $A_n$ and $A_{n+1}$ can be represented as
		\begin{equation*}
			\NiceMatrixOptions{transparent}
			A_{n}=
			\begin{pmatrix}
				a_1 & a_2 &  \cdots & a_n\\
				a_2 & a_3 & \cdots & a_{n+1}\\
				\vdots &\vdots & \iddots & \vdots \\
				a_n & a_{n+1} &  \cdots & a_{2n-1}
			\end{pmatrix}\;
			\mbox{ and }\;
			A_{n+1}=
			\left(\begin{NiceArray}{cccc|c}%
				a_1     &    a_2     &  \cdots     & a_n      &     a_{n+1}\\
				a_2      &    a_3     & \cdots     & a_{n+1} &    \vdots\\
				\vdots &   \vdots  & \iddots    & \vdots   & a_{2n-1} \\
				a_n      & a_{n+1}  &  \cdots    & a_{2n-1} &   a_{2n}  \\
				\hline
				a_{n+1}&\cdots    &  a_{2n-1}    & a_{2n}     & a_{2n+1} 
			\end{NiceArray}\right).
		\end{equation*}
		In particular, $A_n$ contains the entries $a_k$ from $k=1$ to $k=2n-1$, whose cardinality, by induction hypothesis, corresponds to the coefficients of the polynomial $\left(\sum_{\alpha=0}^{n-1} x^{\alpha}\right)^2$. Moreover, the entries $a_k$ from $k=n+1$ to $k=2n-1$ appear two extra times in $A_{n+1}$: once in the last row and once in the last column. Finally, in $A_{n+1}$ we have two additional entries, $a_{2n}$ and $a_{2n+1}$, that are not in $A_n$, and whose cardinality is, respectively, $2$ and $1$. To summarize, the cardinality of the entry $a_k$ in $A_{n+1}$, $k=1,\dots, 2n+1$, corresponds to the $(k-1)$-th power of $x$ in the following polynomial:
		\begin{equation*}
			\left(\sum_{\alpha=0}^{n-1} x^{\alpha}\right)^2 +2 \left(x^n+\dots+x^{2n-2}\right) + 2x^{2n-1}+x^{2n} 
			 =  \left(\sum_{\alpha=0}^{n-1} x^{\alpha}\right)^2 +2 x^n\left(\sum_{\alpha=0}^{n-1} x^{\alpha}\right) +x^{2n} = \left(\sum_{\alpha=0}^{n} x^{\alpha}\right)^2,
		\end{equation*} 
		which concludes the proof.
	\end{itemize} 	
\end{proof}

\begin{example}
	\label{E2ex}
	Let $n=m=3$. After some technical calculations, we get
	\begin{equation*}
		\label{E2}
		\footnotesize
		E_{3,3}=
		\begin{pmatrix}
			1& 0 & 0 & 0& 0 & 0 & 0& 0 & 0\\
			0&1& 0 & 0 & 0 & 0 & 0& 0 & 0\\
			0& 0 & 1 & 0 & 0 & 0& 0 & 0 & 0\\
			0& 0 & 0 & 0 & 0 & 1& 0 & 0 & 0\\
			0& 0 & 0 & 0 & 0 & 0& 0 & 0 & 1
		\end{pmatrix} \qquad \mbox{\normalsize  and} \qquad D_{3,3}  =
	\begin{pmatrix}
	1& 0 & 0 & 0& 0\\
	0& 1 & 0& 0 & 0\\
	0& 0 & 1& 0 & 0\\
	0& 1 & 0& 0 & 0\\
	0& 0 & 1& 0 & 0\\
	0& 0 & 0& 1 & 0\\
	0& 0 & 1& 0 & 0\\
	0& 0 & 0& 1 & 0\\
	0& 0 & 0& 0 & 1
\end{pmatrix}.
	\end{equation*}
	The number of non-zero elements in $E_{3,3}$ is $n+m-1=5$ a stated in Lemma \ref{numlem}, and the number of $1$'s in each column of $D_{3,3}$ is $1-2-3-2-1$, like the coefficients in the polynomial expansion $$\left(\sum_{\alpha=0}^2x^{\alpha}\right)^2 = \left(1+x+x^2\right)^2= 1+2x+3x^2+2x^3+x^4,$$ according to Lemma \ref{card}.
\end{example}

\subsection{The matrix of functions}
From Proposition \ref{shapeX}, $X_n^{(r)}(x)$ is a rectangular block matrix composed by $(n+1)^{r-1}$ blocks $B_{n,r}^{(k)}(x) \in \mathcal{A}_{n+1,n+1}$, which can be expressed by $B_{n,r}^{(k)}(x)= x^{j_k}X_n^{(1)}(x)$, for a certain $j_k \in \{0, \dots, (r-1)n\}$. We shall now give the cardinality of each block and an explicit formula for $j_k$. We denote by $\bmod$ and $\%$ the operators which, respectively, return the remainder and the quotient of the division between two natural numbers, namely for $a,b,c,d\in \N$, $c=a \bmod b$ and $d=a\%b$ means that $a = bd+c$.
\begin{lemma}
	\label{shapeX2}
	For every $n, r\ge 1$, each block of the form $x^j X_n^{(1)}(x)$, for $j=0, \dots, (r-1)n$, is repeated with cardinality $\beta_{n,r}^{(j)}:= \# \{k: j_k =j\}$, that is equal to the coefficient of the $j$-th power of $x$ in the polynomial expansion $\left(\sum_{\alpha=0}^n x^{\alpha}\right)^{r-1}.$
\end{lemma}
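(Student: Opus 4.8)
The plan is to exploit the associativity of the Kronecker product to recast $X_n^{(r)}(x)$ into a form that makes the block structure transparent. Starting from the definition $X_n^{(r)}(x) = H_n(x)^{\top}\otimes^r H_n(x) = \left(H_n(x)^{\top}\right)^{\otimes r}\otimes H_n(x)$, I would regroup the factors as
\begin{equation*}
X_n^{(r)}(x) = \left(H_n(x)^{\top}\right)^{\otimes(r-1)}\otimes\left(H_n(x)^{\top}\otimes H_n(x)\right) = \left(H_n(x)^{\top}\right)^{\otimes(r-1)}\otimes X_n^{(1)}(x),
\end{equation*}
using \eqref{Xn1}. Since $\left(H_n(x)^{\top}\right)^{\otimes(r-1)}$ is a row vector in $\R^{(n+1)^{r-1}}$, this identity displays $X_n^{(r)}(x)$ as the horizontal concatenation of $(n+1)^{r-1}$ blocks, each obtained by multiplying $X_n^{(1)}(x)$ by one entry of $\left(H_n(x)^{\top}\right)^{\otimes(r-1)}$.

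Next I would describe those entries explicitly. By the definition of the Kronecker product, the entries of $\left(H_n(x)^{\top}\right)^{\otimes(r-1)}$ are exactly the monomials $x^{i_1+\cdots+i_{r-1}}$ as $(i_1,\dots,i_{r-1})$ ranges over $\{0,\dots,n\}^{r-1}$, each tuple corresponding to one block. Consequently the $k$-th block equals $x^{j_k}X_n^{(1)}(x)$ with $j_k = i_1+\cdots+i_{r-1}$, which recovers the form predicted by Proposition \ref{shapeX} and shows that the exponent $j$ ranges over $\{0,\dots,(r-1)n\}$.

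With this identification, computing the cardinality $\beta_{n,r}^{(j)}$ reduces to a counting problem: it is the number of tuples $(i_1,\dots,i_{r-1})\in\{0,\dots,n\}^{r-1}$ with $i_1+\cdots+i_{r-1}=j$. I would finish by a generating-function argument: expanding the product $\left(\sum_{\alpha=0}^n x^{\alpha}\right)^{r-1}$, each factor contributes a choice of exponent $i_\ell\in\{0,\dots,n\}$ via its term $x^{i_\ell}$, so the coefficient of $x^j$ counts precisely those tuples summing to $j$. Hence $\beta_{n,r}^{(j)}$ equals the coefficient of the $j$-th power of $x$ in $\left(\sum_{\alpha=0}^n x^{\alpha}\right)^{r-1}$, as claimed.

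As for the main difficulty, there is no deep obstacle once the factorization $X_n^{(r)}(x)=\left(H_n(x)^{\top}\right)^{\otimes(r-1)}\otimes X_n^{(1)}(x)$ is in place; the only point requiring care is to verify that the associativity regrouping of the Kronecker product preserves the column ordering, so that the blocks of $X_n^{(r)}(x)$ are genuinely indexed by the entries of $\left(H_n(x)^{\top}\right)^{\otimes(r-1)}$ in the expected order. Alternatively, one could bypass the explicit counting and argue by induction on $r$ via the recursion $X_n^{(r)}(x)=H_n(x)^{\top}\otimes X_n^{(r-1)}(x)$, which turns the block cardinalities into the convolution of $\beta_{n,r-1}^{(\cdot)}$ with the coefficients of $\sum_{\alpha=0}^n x^{\alpha}$, yielding the same generating polynomial.
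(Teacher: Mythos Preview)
Your argument is correct. The factorization $X_n^{(r)}(x)=\left(H_n(x)^{\top}\right)^{\otimes(r-1)}\otimes X_n^{(1)}(x)$ is valid by associativity, the description of the entries of the $(r-1)$-fold Kronecker power of $H_n(x)^{\top}$ as the monomials $x^{i_1+\cdots+i_{r-1}}$ is immediate from Definition~\ref{kronD}, and the generating-function identification of the number of compositions with the coefficient of $x^j$ in $\left(\sum_{\alpha=0}^n x^{\alpha}\right)^{r-1}$ is standard.

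The paper, however, takes the inductive route you sketch only as an alternative at the end: it argues on $r\ge 1$ via the one-step recursion $X_n^{(r+1)}(x)=H_n(x)^{\top}\otimes X_n^{(r)}(x)$, observes that multiplying $X_n^{(r)}(x)$ by $x^{\alpha}$ shifts the block-exponent distribution by $\alpha$, and then sums over $\alpha=0,\dots,n$ to obtain $\left(\sum_{\alpha=0}^n x^{\alpha}\right)\cdot\left(\sum_{\alpha=0}^n x^{\alpha}\right)^{r-1}$. Your direct approach unwinds this induction in one stroke by passing immediately to the full $(r-1)$-fold tensor power; it is shorter and makes the combinatorial content (counting weak compositions with bounded parts) explicit, whereas the paper's inductive argument stays closer to the block-matrix picture and avoids having to describe the indexing of $\left(H_n(x)^{\top}\right)^{\otimes(r-1)}$ by tuples. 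Both lead to the same polynomial identity, and neither requires any nontrivial input beyond associativity of the Kronecker product.
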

\begin{proof}
	We proceed by induction on $r\ge 1$.
	\begin{itemize}[leftmargin=*]
		\item $r=1$: $X_n^{(1)}(x)= x^0X_n^{(1)}(x)$ is composed by only one block,
		and the polynomial $\left(\sum_{\alpha=0}^n x^{\alpha}\right)^0 = 1$ has the only coefficient $1$, that is the cardinality of the unique block.
		\item $r \to  r+1$: we assume the statement holds for $r$, then we need to prove that for each $j=0, \dots, rn$, the cardinality of the block of the form $x^j X_n^{(1)}(x)$ corresponds to the coefficient of the $j$-th power of $x$ in the polynomial expansion $\left(\sum_{\alpha=0}^n x^{\alpha}\right)^{r}$. Since the Kronecker product is associative, we write
		\begin{align}
			\notag
			X_n^{(r+1)}(x) &= H_n(x)^{\top} \otimes^{r+1}H_n(x) = H_n(x)^{\top}\otimes \left(H_n(x)^{\top} \otimes^{r}H_n(x)\right) = H_n(x)^{\top}\otimes X_n^{(r)}(x)\\
		&= \left(
			\begin{NiceArray}{c:c:c:c:c}
				X_n^{(r)}(x) & xX_n^{(r)}(x) & x^2X_n^{(r)}(x)& \cdots\cdots & x^nX_n^{(r)}(x)
			\end{NiceArray}\right).\label{Hr}
		\end{align}
		We know by induction hypothesis that each block of $X_n^{(r)}(x)$ of the form $x^{j} X_n^{(1)}(x)$, $j \in \{0, \dots, (r-1)n\}$, has cardinality according to the $j$-th coefficient of the polynomial expansion $\left(\sum_{\alpha=0}^n x^{\alpha}\right)^{r-1}$. It is clear that such cardinality shifts to the upper power when we multiply $X_n^{(r)}(x)$ by $x$, and it shifts by two positions when we multiply $X_n^{(r)}(x)$ by $x^2$, and so on (for example, if the block $x^{j} X_n^{(1)}(x)$ has cardinality $\beta_{n,r}^{(j)}$ in $X_n^{(r)}(x)$, then $\beta_{n,r}^{(j)}$ is the cardinality of $x^{j+1} X_n^{(1)}(x)$ in $xX_n^{(r)}(x)$, of $x^{j+2} X_n^{(1)}(x)$ in $x^2X_n^{(r)}(x)$, and so on). To summarize, we say that in $X_n^{(r+1)}(x)$ the cardinality for the block of the form $x^jX_n^{(1)}(x)$ corresponds to the coefficient of the $j$-th power in the following polynomial:
		\begin{equation*}
			\left(\sum_{\alpha=0}^n x^{\alpha}\right)^{r-1} + x\left(\sum_{\alpha=0}^n x^{\alpha}\right)^{r-1} + \dots + x^n \left(\sum_{\alpha=0}^n x^{\alpha}\right)^{r-1}= \left(\sum_{\alpha=0}^n x^{\alpha}\right)^{r},
		\end{equation*}
		which concludes the proof.
	\end{itemize} 
\end{proof}

\begin{lemma}
	\label{expjk}
	For every $n,r\ge 1$, the blocks $B_{n,r}^{(k)}(x)$ composing  the matrix of functions $X_n^{(r)}(x)$ are of the form $B_{n,r}^{(k)}(x)= x^{\gamma_{n,r}^{(k)}}X_n^{(1)}(x)$, for $\gamma_{n,r}^{(k)} \in \{0, \dots, (r-1)n\}$ given by the formula
	\begin{equation*}
		\label{power}
		\gamma_{n,r}^{(k)} = \sum_{j=0}^{r-1}  \left\{(k-1) \bmod(n+1)^{r-j} \right\}\% (n+1)^{r-1-j} \quad \mbox{for }k=1,\dots (n+1)^{r-1}.
	\end{equation*}
\end{lemma}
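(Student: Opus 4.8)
The plan is to argue by induction on $r\ge 1$, exploiting the block recursion recorded in the proof of Lemma~\ref{shapeX2}. Writing $M:=n+1$ for brevity, equation~\eqref{Hr} shows that
\[
X_n^{(r+1)}(x)=\left(\,X_n^{(r)}(x)\mid xX_n^{(r)}(x)\mid\cdots\mid x^nX_n^{(r)}(x)\,\right),
\]
so the $M^{r}$ blocks of $X_n^{(r+1)}(x)$ split into $M$ consecutive groups of $M^{r-1}$ blocks each, the $a$-th group ($a=0,\dots,n$) being $x^{a}X_n^{(r)}(x)$. Indexing a block of $X_n^{(r+1)}(x)$ by $K\in\{1,\dots,M^{r}\}$ and writing $K-1=a\,M^{r-1}+(k-1)$ with $0\le k-1<M^{r-1}$, that block is the $k$-th block of $X_n^{(r)}(x)$ multiplied by $x^{a}$. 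Since by the very definition of the two operators $a=(K-1)\%M^{r-1}$ and $k-1=(K-1)\bmod M^{r-1}$, this yields the recursion
\[
\gamma_{n,r+1}^{(K)}=\left\{(K-1)\%M^{r-1}\right\}+\gamma_{n,r}^{(k)},\qquad k-1=(K-1)\bmod M^{r-1}.
\]
Intuitively, this makes $\gamma_{n,r}^{(k)}$ the sum of the base-$M$ digits of $k-1$, and the claimed formula is precisely the one extracting and summing those digits.

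For the base case $r=1$ the only block is $X_n^{(1)}(x)=x^{0}X_n^{(1)}(x)$, and the formula gives $\gamma_{n,1}^{(1)}=\{0\bmod M\}\%1=0$, as required. Assuming the formula for $r$, I would substitute it into the recursion with $k-1=(K-1)\bmod M^{r-1}$ and verify that it reproduces the formula for $r+1$, namely
\[
\gamma_{n,r+1}^{(K)}=\sum_{j=0}^{r}\left\{(K-1)\bmod M^{r+1-j}\right\}\%M^{r-j}.
\]

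Writing $L:=K-1<M^{r}$, the verification proceeds in three moves. First, the $j=0$ term vanishes: $L\bmod M^{r+1}=L$ and then $L\%M^{r}=0$ because $L<M^{r}$. Second, after the index shift $j'=j-1$, the $j'=0$ term equals $\{L\bmod M^{r}\}\%M^{r-1}=L\%M^{r-1}=a$, which is the leading summand in the recursion. Third, for $j'\ge 1$ one has the divisibility $M^{r-j'}\mid M^{r-1}$, whence $L\bmod M^{r-j'}=\big((K-1)\bmod M^{r-1}\big)\bmod M^{r-j'}=(k-1)\bmod M^{r-j'}$; the resulting tail $\sum_{j'=1}^{r-1}\{(k-1)\bmod M^{r-j'}\}\%M^{r-1-j'}$ is exactly $\gamma_{n,r}^{(k)}$, once one observes that the $j=0$ term of the formula for $\gamma_{n,r}^{(k)}$ is itself zero (since $k-1<M^{r-1}$ forces $(k-1)\%M^{r-1}=0$). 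Collecting the three contributions gives $\gamma_{n,r+1}^{(K)}=a+\gamma_{n,r}^{(k)}$, matching the recursion and closing the induction.

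The main obstacle is purely bookkeeping: keeping straight the precedence between $\bmod$ and $\%$, handling the index shift $j\mapsto j-1$, and justifying the replacement of $L$ by $k-1$ inside $L\bmod M^{r-j'}$, which rests on the divisibility $M^{r-j'}\mid M^{r-1}$ valid exactly when $j'\ge 1$. No deeper difficulty arises. As an alternative one could bypass the recursion entirely by proving directly that $\gamma_{n,r}^{(k)}$ equals the base-$M$ digit sum of $k-1$ and that each summand of the stated formula extracts one digit, but the inductive route above keeps the manipulations closest to the already-established block structure of $X_n^{(r)}(x)$.
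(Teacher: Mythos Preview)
Your proof is correct and follows essentially the same approach as the paper: both argue by induction on $r$ using the block recursion \eqref{Hr}, decompose the index via quotient and remainder by a power of $M=n+1$, observe that the $j=0$ summand vanishes, shift the summation index, and use the divisibility $M^{r-j'}\mid M^{r-1}$ for $j'\ge 1$ to replace $K-1$ by $k-1$ inside the inner $\bmod$. The only cosmetic difference is that the paper introduces an auxiliary quantity $P_{n,r}^{(k)}$ for the exponents in $(H_n(x)^{\top})^{\otimes r}$ and proves the formula for that, recovering $\gamma_{n,r}^{(k)}=P_{n,r-1}^{(k)}$ at the end, whereas you work with $\gamma_{n,r}^{(k)}$ directly; the algebraic manipulations are otherwise identical.
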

\begin{proof}
	By associativity property of the Kronecker product, we write $X_n^{(r)}(x) = \left(H_n(x)^{\top}\right)^{\otimes (r-1)} \otimes X_n^{(1)}(x)$, where $\left(H_n(x)^{\top}\right)^{\otimes (r-1)}$ is a row vector in $\R^{(n+1)^{r-1}}$ with elements the monomials $x^{\gamma_{n,r}^{(k)}}$ whose exponents we want to study, $k=1,\dots (n+1)^{r-1}$. We focus on $\left(H_n(x)^{\top}\right)^{\otimes r}$ for simplicity: we need then to prove that the $k$-th element of $\left(H_n(x)^{\top}\right)^{\otimes r}$ is a monomial with exponent given by
	\begin{equation}
		\label{power2}
		P_{n,r}^{(k)} = \sum_{j=0}^{r}  \left\{(k-1) \bmod(n+1)^{r-j+1} \right\}\% (n+1)^{r-j} \quad \mbox{for } k=1,\dots (n+1)^{r}.
	\end{equation}
	The result will follow noticing that $\gamma_{n,r}^{(k)}  = P_{n,r-1}^{(k)}$. We proceed by induction on $r\ge1$.
	\begin{itemize}[leftmargin=*]
		\item $r=1$: for $H_n(x)$ we easily notice that the exponent of the $k$-th term equals $k-1$, $k=1,\dots,(n+1)$. We now look at equation \eqref{power2}:
		\begin{align*}
			P_{n,1}^{(k)}& = \sum_{j=0}^{1}  \left\{(k-1) \bmod(n+1)^{2-j} \right\}\% (n+1)^{1-j}\notag\\
			& = \left\{(k-1) \bmod(n+1)^{2} \right\}\% (n+1) + \left\{(k-1) \bmod(n+1)\right\}\% (n+1)^0
		\end{align*}
		where the first term is $0$ and the second one is $k-1$, since $0\le k-1\le n$.
		
		\item $r\to r+1$: we now assume formula \eqref{power2} holds for $r$. By associativity property, $\left(H_n(x)^{\top}\right)^{\otimes (r+1)} =H_n(x)^{\top} \otimes \left(H_n(x)^{\top}\right)^{\otimes r}$, so that $\left(H_n(x)^{\top}\right)^{\otimes (r+1)}$ is the block vector
		\begin{equation*}
			\left(H_n(x)^{\top}\right)^{\otimes (r+1)} = \left(
			\begin{NiceArray}{c:c:c:c:c}
				\left(H_n(x)^{\top}\right)^{\otimes r} & x\left(H_n(x)^{\top}\right)^{\otimes r}& x^2\left(H_n(x)^{\top}\right)^{\otimes r}& \cdots\cdots & x^n\left(H_n(x)^{\top}\right)^{\otimes r}
			\end{NiceArray}\right).
		\end{equation*}
		For each element in $\left(H_n(x)^{\top}\right)^{\otimes (r+1)}$, the exponent is $P_{n,r}^{(k)} + \alpha$ with $\alpha\in\{0,\dots,n\}$. However, we notice that $\left(H_n(x)^{\top}\right)^{\otimes r}$ has index $k = 1,\dots,(n+1)^r$, while $\left(H_n(x)^{\top}\right)^{\otimes (r+1)}$ has index $\hat{k} = 1,\dots,(n+1)^{r+1}$.
		Then in formula \eqref{power2} we must substitute $(k-1) = (\hat{k}-1)\bmod (n+1)^r$. Moreover, $\alpha = (\hat{k}-1)\% (n+1)^r$. Putting all these considerations together, we write that (we omit the superscript $\hat{}$ on the index $k$):
		\begin{equation}
			\label{power3}
			P_{n,r+1}^{(k)} = (k-1)\% (n+1)^r \\+ \sum_{j=0}^{r}  \left\{(k-1) \bmod (n+1)^r\bmod(n+1)^{r-j+1} \right\}\% (n+1)^{r-j}.
		\end{equation}
		In particular, it is easy to see that
		\begin{equation*}
			(k-1) \bmod (n+1)^r\bmod(n+1)^{r-j+1}=
			\begin{cases}
				(k-1) \bmod(n+1)^{r-j+1} & j\ge 1\\
				(k-1) \bmod (n+1)^r & j=0
			\end{cases},
		\end{equation*}
		so that equation \eqref{power3} becomes
		\begin{multline}
			\label{power4}
			P_{n,r+1}^{(k)} = (k-1)\% (n+1)^r + (k-1) \bmod (n+1)^r\% (n+1)^r +\\+  \sum_{j=1}^{r}  \left\{(k-1) \bmod(n+1)^{r-j+1} \right\}\% (n+1)^{r-j}.
		\end{multline}
		Moreover, since $1\le k\le(n+1)^{r+1}$, we also notice that 
		\begin{align*}
			& (k-1)\% (n+1)^r = (k-1) \bmod(n+1)^{r+1}\% (n+1)^r,\\
			& (k-1) \bmod (n+1)^r\% (n+1)^r (=0)= (k-1) \bmod (n+1)^{r+2}\% (n+1)^{r+1},
		\end{align*}
		and equation \eqref{power4} can be rewritten as
		\begin{align*}
			P_{n,r+1}^{(k)} &= \sum_{j=-1}^{r}  \left\{(k-1) \bmod(n+1)^{r-j+1} \right\}\% (n+1)^{r-j}\\&=\sum_{j=0}^{r+1}  \left\{(k-1) \bmod(n+1)^{r-j+2} \right\}\% (n+1)^{r-j+1},
		\end{align*}
		which concludes the proof.
	\end{itemize}
\end{proof}

\section{Proofs}
\label{el_dup}
We report in this section the proofs of the main results of the paper together with some additional results that are needed for the proofs.
\begin{proposition}
	For the matrices $A, B \in \R^{n\times m}$ with elements, respectively, $[A]_{i,j} = a_{i,j}$ and $[B]_{i,j} = b_{i,j}$, $1\le i \le n$ and $1 \le j \le m$, and $\vec{x}, \vec{y}$ vectors of any order, we have the following properties:
	\begin{subequations}
		\begin{align}
			&A = \sum_{i=1}^{n}\sum_{j=1}^m a_{i,j}\vec{e}_{n,i}\vec{e}_{m,j}^{\top}\label{pr1}\\
			&(vec(A))^{\top}vec(B) = tr(A^{\top}B)\label{pr4}\\
			&\vec{x} \otimes \vec{y} = vec(\vec{y}\,\vec{x}^{\top})\label{pr2}\\
			&\vec{x}\otimes \vec{y}^{\top}= \vec{x}\,\vec{y}^{\top} = \vec{y}^{\top} \otimes \vec{x}\label{pr3}
		\end{align}
	\end{subequations}
	where $tr$ denotes the trace operator. Moreover, for every $A \in \R^{p\times q}$, $B \in \R^{r\times s}$, $C \in \R^{q\times k}$ and $D \in \R^{s\times l}$, the mixed-product property holds:
	\begin{equation}
		\label{mix}
		\left(A \otimes B\right)\left(C \otimes D\right) = \left(AC\right) \otimes \left(BD\right).
	\end{equation}
\end{proposition}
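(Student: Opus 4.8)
The plan is to verify each identity by a direct computation from the definitions, comparing matrices entry by entry (for \eqref{pr1}), scalars (for \eqref{pr4}), or blocks (for the remaining Kronecker identities). No structural argument beyond the definitions of $vec$ (Definition \ref{vecD}), the Kronecker product (Definition \ref{kronD}), and the trace is required; these are all standard facts collected here for later reference.

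First I would establish \eqref{pr1}: the rank-one matrix $\vec{e}_{n,i}\vec{e}_{m,j}^{\top}$ has a single nonzero entry, equal to $1$ in position $(i,j)$. Hence the $(p,q)$-entry of $\sum_{i,j}a_{i,j}\vec{e}_{n,i}\vec{e}_{m,j}^{\top}$ equals $a_{p,q}$, which is exactly $[A]_{p,q}$, proving the identity. Next, for \eqref{pr4} I would use that $vec(A)$ stacks the columns $A_{:j}$, so that $(vec(A))^{\top}vec(B)=\sum_{j=1}^m A_{:j}^{\top}B_{:j}=\sum_{j=1}^m\sum_{i=1}^n a_{i,j}b_{i,j}$. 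On the other side, the $j$-th diagonal entry of $A^{\top}B$ is $\sum_{i=1}^n a_{i,j}b_{i,j}$, so summing over $j$ gives $tr(A^{\top}B)$, and the two expressions coincide.

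For the Kronecker identities I would work blockwise. For \eqref{pr2}, writing $\vec{x}=(x_1,\dots,x_m)^{\top}$, the outer product $\vec{y}\,\vec{x}^{\top}$ has $j$-th column $x_j\vec{y}$, so stacking columns gives $vec(\vec{y}\,\vec{x}^{\top})=(x_1\vec{y}^{\top},\dots,x_m\vec{y}^{\top})^{\top}$; by Definition \ref{kronD} this is precisely $\vec{x}\otimes\vec{y}$. For \eqref{pr3} I would read $\vec{x}$ as a column ($n\times1$) and $\vec{y}^{\top}$ as a row ($1\times m$): the product $\vec{x}\otimes\vec{y}^{\top}$ stacks the scalar-times-row blocks $x_i\vec{y}^{\top}$ into the $n\times m$ matrix with $(i,j)$-entry $x_iy_j$, i.e.\ $\vec{x}\,\vec{y}^{\top}$, while $\vec{y}^{\top}\otimes\vec{x}$ produces the same entry $x_iy_j$, giving the three-way equality. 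Finally, for the mixed-product property \eqref{mix} I would compute the $(i,l)$-block of $(A\otimes B)(C\otimes D)$: since the $(i,k)$-block of $A\otimes B$ is $a_{i,k}B$ and the $(k,l)$-block of $C\otimes D$ is $c_{k,l}D$, block multiplication yields $\sum_k a_{i,k}c_{k,l}\,BD=(AC)_{i,l}\,BD$, which is exactly the $(i,l)$-block of $(AC)\otimes(BD)$.

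The computations are all elementary; the only point demanding care is the consistency of the row/column conventions tying the Kronecker product to $vec$ in \eqref{pr2} and \eqref{pr3}, where one must keep track of whether a vector is read as a column or a row so that Definition \ref{kronD} is applied with the correct shapes. This bookkeeping, together with the dimensional compatibilities $A\in\R^{p\times q}$, $C\in\R^{q\times k}$, $B\in\R^{r\times s}$, $D\in\R^{s\times l}$ ensuring that $AC$ and $BD$ are defined in \eqref{mix}, is the only place where an error could creep in; there is no genuine conceptual obstacle.
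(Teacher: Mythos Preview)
Your proposal is correct: each identity is a routine consequence of the definitions, and your entry-by-entry and blockwise verifications are all valid. The paper does not actually prove this proposition but simply cites \cite[Section 2]{magnus1980} and \cite[Lemma 4.2.10]{horn1991}, so your explicit computations supply precisely the details that those references contain; there is no substantive difference in approach, only in the level of detail provided.
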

\begin{proof}\renewcommand{\qedsymbol}{}
	We refer to \cite[Section 2]{magnus1980} and \cite[Lemma 4.2.10]{horn1991}.
\end{proof}

\subsubsection*{Proof of Theorem \ref{Enmth}}
\begin{proof}
	We will prove the existence of the matrix $E_{n,m}$ by proving its explicit definition. We first give a characterization of $vecL(A)$ in terms of the unitary vectors. By equation \eqref{pr1}, one easily see that
	\begin{equation}
		\label{vecLp}
		vecL(A) = \sum_{i=1}^n a_{i,1}\vec{e}_{n+m-1,i}+\sum_{i=2}^m a_{n,i}\vec{e}_{n+m-1,n+i-1}.
	\end{equation} 
	In particular, $a_{i,1}= \vec{e}_{n,i}^{\top} A \vec{e}_{m,1}= tr(\vec{e}_{m,1}\vec{e}_{n,i}^{\top} A)$ and $a_{n,i}= \vec{e}_{n,n}^{\top} A \vec{e}_{m,i}= tr(\vec{e}_{m,i}\vec{e}_{n,n}^{\top} A)$. 
	Moreover, by property \eqref{pr4} we write that 
	\begin{align*}
		&tr(\vec{e}_{m,1}\vec{e}_{n,i}^{\top} A) = tr((\vec{e}_{n,i}\vec{e}_{m,1}^{\top})^{\top} A) = vec(\vec{e}_{n,i}\vec{e}_{m,1}^{\top})^{\top}vec(A),\\
		&tr(\vec{e}_{m,i}\vec{e}_{n,n}^{\top} A) = tr((\vec{e}_{n,n}\vec{e}_{m,i}^{\top})^{\top} A) = vec(\vec{e}_{n,n}\vec{e}_{m,i}^{\top})^{\top}vec(A).
	\end{align*}
	By combining these results with  equations \eqref{pr2}, \eqref{pr3} and \eqref{vecLp}, we get that
	\begin{align*}
		\label{vecLp2}
		vecL(A) &= \left(\sum_{i=1}^n \vec{e}_{n+m-1,i}vec(\vec{e}_{n,i}\vec{e}_{m,1}^{\top})^{\top} +\sum_{i=2}^m \vec{e}_{n+m-1,n+i-1}vec(\vec{e}_{n,n}\vec{e}_{m,i}^{\top})^{\top}\right)vec(A)\\
		&= \left(\sum_{i=1}^n \vec{e}_{n+m-1,i}\otimes \vec{e}_{m,1}^{\top}\otimes \vec{e}_{n,i}^{\top}+\sum_{i=2}^m \vec{e}_{n+m-1,n+i-1}\otimes\vec{e}_{m,i}^{\top}\otimes \vec{e}_{n,n}^{\top}\right)vec(A).
	\end{align*} 
	Then the L-eliminating matrix $E_{n,m}$ satisfying the implicit definition in equation \eqref{Enm2} is the one in equation \eqref{Enmex2}. This concludes the proof.
\end{proof}

\subsubsection*{Proof of Theorem \ref{Dnth}}
\begin{proof}
	We construct the matrix $D_{n,m}$ explicitly. Since $A\in \mathcal{A}_{n,m}$, the elements of $A$ along the skew-diagonals coincide and $A$ has exactly $n+m-1$ skew-diagonals, leading to at most $n+m-1$ different values. In the notation of Definition \ref{spaceA}, let $a_k, \, k=1, \dots, (n+m-1)$, such that $vecL(A) = (a_1, a_2, \dots, a_{n+m-1})^{\top}\in\R^{n+m-1}$. For $1\le i \le n$ and $1\le j \le m$, it holds that 
	\begin{equation}
		[A]_{i,j}=a_{i+j-1} = [vecL(A)]_{i+j-1}=vecL(A)^{\top}\vec{e}_{n+m-1,i+j-1}= \vec{e}_{n+m-1,i+j-1}^{\top}vecL(A).\label{vij}
	\end{equation}
	We notice that $\vec{e}_{m,j}\otimes \vec{e}_{n,i}$ is the unitary vector in $\R^{nm}$ with $1$ in position $n(j-1)+i$ and $0$ elsewhere. We use this fact together with equation \eqref{vij} to express the vectorization of $A$ as follows:
	\begin{align*}
		vec(A)= \sum_{i=1}^{n}\sum_{j=1}^{m} a_{i+j-1}\vec{e}_{m,j}\otimes \vec{e}_{n,i}= \left(\sum_{i=1}^{n}\sum_{j=1}^{m} \left(\vec{e}_{m,j}\otimes \vec{e}_{n,i}\right)\vec{e}_{n+m-1,i+j-1}^{\top}\right)vecL(A).
	\end{align*}
	By equation \eqref{pr3}, we define $D_{n,m}$ as in equation \eqref{Dnex2}, which proves the theorem.
\end{proof}

\subsubsection*{Proof of Proposition \ref{EDp}}
\begin{proof}
	For the second part of the statement, the proof is straightforward from the definitions of $E_{n,m}$ and $D_{n,m}$, namely combining equations \eqref{Enm2} and \eqref{Dn2}. We resume the situation as follows:
	\begin{equation*}
		vec(A)\xrightarrow{\;E_{n,m}\;} vecL(A) \xrightarrow{\;D_{n,m}\;} vec(A)
	\end{equation*} 
	so that $D_{n,m} E_{n,m}\in\R^{nm\times nm}$ acts like an identity operator on $vec(A)$ for each $A \in \mathcal{A}_{n,m}$.
	
	We prove now the first part of the statement. By equations \eqref{Enmex2} and \eqref{Dnex2}, we write that
	\begin{multline}
		E_{n,m} D_{n,m}= \sum_{i=1}^n\sum_{j=1}^m \sum_{k=1}^n \left(\vec{e}_{n+m-1,k}\otimes \vec{e}_{m,1}^{\top}\otimes \vec{e}_{n,k}^{\top}\right)\left( \vec{e}_{n+m-1,i+j-1}^{\top}\otimes\vec{e}_{m,j}\otimes \vec{e}_{n,i}\right)+\\
		+\sum_{i=1}^n\sum_{j=1}^m\sum_{k=2}^m \left(\vec{e}_{n+m-1,n+k-1}\otimes\vec{e}_{m,k}^{\top}\otimes \vec{e}_{n,n}^{\top}\right)\left(\vec{e}_{n+m-1,i+j-1}^{\top}\otimes\vec{e}_{m,j}\otimes \vec{e}_{n,i} \right).\label{EnDn}
	\end{multline} 
	We denote with $\I_i\in \R^{(n+m-1)\times(n+m-1)}$ the matrix with $1$ in position $(i,i)$ and $0$ elsewhere, and with $\I^{i,j}\in \R^{(n+m-1)\times(n+m-1)}$ the matrix with $1$ in position $(k,k)$ for $i\le k\le j$. We focus on the first sum in equation \eqref{EnDn}. We notice that:
	\begin{itemize}[leftmargin=*]
		\item $\vec{e}_{m,1}^{\top}\otimes \vec{e}_{n,k}^{\top}=\vec{e}_{nm,k}^{\top}$ and $A_k := \vec{e}_{n+m-1,k}\otimes \vec{e}_{m,1}^{\top}\otimes \vec{e}_{n,k}^{\top}\in \R^{(n+m-1)\times nm}$ is the matrix with $1$ in position $(k,k)$ and $0$ elsewhere;
		\item $\vec{e}_{m,j}\otimes \vec{e}_{n,i}=\vec{e}_{nm,n(j-1)+i}$ and $B_{i,j} := \vec{e}_{n+m-1,i+j-1}^{\top}\otimes\vec{e}_{m,j}\otimes \vec{e}_{n,i}\in \R^{nm\times (n+m-1)}$ is the matrix with $1$ in position $(n(j-1)+i,i+j-1)$ and $0$ elsewhere.
	\end{itemize}
	Then $A_k B_{i,j} =\I_i$. Similarly, looking at the second sum in equation \eqref{EnDn}, we notice that $\vec{e}_{m,k}^{\top}\otimes \vec{e}_{n,n}^{\top}=\vec{e}_{nm,kn}^{\top}$ and $\tilde{A}_k := \vec{e}_{n+m-1,n+k-1}\otimes\vec{e}_{m,k}^{\top}\otimes \vec{e}_{n,n}^{\top}\in \R^{(n+m-1)\times nm}$ is the matrix with $1$ in position $(n+k-1,kn)$ and $0$ elsewhere. Then $\tilde{A}_k B_{i,j} =\I_{n+k-1}$. Combining these results into equation \eqref{EnDn} we get
	\begin{align*}
		E_{n, m}D_{n,m} &= \sum_{i=1}^n\sum_{j=1}^m \sum_{k=1}^n  A_k B_{i,j} + \sum_{i=1}^n\sum_{j=1}^m\sum_{k=2}^m \tilde{A}_k B_{i,j} \\&= \sum_{k=1}^{n}\I_k + \sum_{k=2}^{m} \I_{n+k-1} = \I^{1,n} + \I^{n+1, n+m-1} = I_{n+m-1},
	\end{align*}
	that concludes the proof.
\end{proof}

\subsubsection*{Proof of Proposition \ref{Mnex}}
\begin{proof}
By combining $\G H_{2n}(x) = G_{2n} H_{2n}(x)$ with Lemma \ref{id} we get
$vecL(\G X_n(x)) = G_{2n} vecL(X_n(x)),$ 
which, by definition of $E_{n+1}$ and multiplying both sides by $D_{n+1}$ from the left, becomes 	\begin{equation*}
		\label{Dd}
		D_{n+1} E_{n+1} vec(\G X_n(x)) = D_{n+1} G_{2n} E_{n+1} vec(X_n(x)).
	\end{equation*}
	In particular, $D_{n+1} E_{n+1} vec(\G X_n(x))  = vec(\G X_n(x))$ by Proposition \ref{EDp}, so that $\tilde{G}_n^{(1)}= D_{n+1} G_{2n} E_{n+1}$. We consider now the definition of exponential function as infinite sum of powers and write
	\begin{equation*}
		e^{\tilde{G}_n^{(1)} t} = e^{D_{n+1} G_{2n} E_{n+1}t} = \sum_{k=0}^{\infty}\frac{t^k}{k!}\left(D_{n+1} G_{2n} E_{n+1}\right)^k.
	\end{equation*} 
	The result follows from Proposition \ref{EDp} since $\left(D_{n+1} G_{2n} E_{n+1}\right)^k = D_{n+1} \left(G_{2n}\right)^k E_{n+1}$, $k\ge0$.
\end{proof}

\subsubsection*{Proof of Theorem \ref{odesol2}}
\begin{proof}
	Starting from equation \eqref{ode} and applying the operator $vec$ on both sides, we get
	\begin{equation*}
		\E\left[vec\left(X_{n}(Y(s_0))\right)\left.\right|\F_t\right] = vec\left(X_{n}(Y(t))\right) + \int_t^{s_0} \E\left[\G vec\left(X_{n}(Y(s))\right)\left.\right|\F_t\right] ds,
	\end{equation*}
	which, by equation \eqref{Mn} and Proposition \ref{Mnex}, becomes
	\begin{equation*}
		\E\left[vec\left(X_{n}(Y(s_0))\right)\left.\right|\F_t\right] = vec\left(X_{n}(Y(t))\right) +D_{n+1} G_{2n} E_{n+1}\int_t^{s_0} \E\left[ vec\left(X_{n}(Y(s))\right)\left.\right|\F_t\right] ds.
	\end{equation*} 
	For $Z(s):=\E[vec\left(X_{n}(Y(s))\right)\left.\right|\F_t]$, the proof then proceeds in a similar way to the proof of Theorem \ref{genm} and the statement is proved by combining the result with equation \eqref{corr2}.
\end{proof}

\subsubsection*{Proof of Proposition \ref{shapeX}}
\begin{proof}
By definition of d-Kronecker product, since $H_n(x)\in \R^{n+1}$, one can verify that $X_n^{(r)}(x) \in \R^{(n+1)\times(n+1)^r}$. We then proceed by induction on $r\ge 1$.
\begin{itemize}[leftmargin=*]
\item $r=1$: we get that $X_n^{(1)}(x)= H_n(x)^{\top} \otimes H_n(x) = x^0X_n^{(1)}(x)\in \mathcal{A}_{n+1,n+1}$.
\item $r \to  r+1$: assuming the statement holds for $r$, from equation \eqref{Hr}, we see that we need to multiply the row vector $H_n(x)^{\top} = (1,x,\dots,x^n)$ in the Kronecker sense with the matrix $X_n^{(r)}(x)= H_n(x)^{\top} \otimes^{r}H_n(x)$, which we know satisfies the statement of the proposition.  That means that each of the $(n+1)^{r-1}$ blocks $B_{n,r}^{(k)}(x)=x^{j_k} X_n^{(1)}(x)$, for $j_k \in \{0, \dots, (r-1)n\}$, must be multiplied with each of the elements of the vector $H_n(x)$, namely with each power $x^{\alpha}$, $\alpha=0,\dots, n$. We can then say that there are $(n+1)^{r}$ blocks $B_{n,r+1}^{(k)}(x)\in \mathcal{A}_{n+1,n+1}$ and that for each block there exists an index $\gamma_k \in \{0, \dots, rn\}$ such that $B_{n,r+1}^{(k)}(x)=x^{\gamma_k} X_n^{(1)}(x)$.
This concludes the proof.
\end{itemize} 
\end{proof}

\begin{lemma}
\label{lemma}
It holds that $$vec(X_n^{(m)}(x)) = 
H_n(x)^{\otimes^{m+1}}.$$ Moreover, after removing all the duplicates from $vec(X_n^{(m)}(x))$, we are left with $H_{n(m+1)}(x)$.
\begin{proof}
The result follows from a direct verification.
\end{proof}
\end{lemma}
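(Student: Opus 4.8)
The plan is to establish the first identity by rewriting $X_n^{(m)}(x)$ as a single outer product and then invoking the vectorization rule \eqref{pr2}. By the definition of the d-Kronecker product, $X_n^{(m)}(x) = \left(H_n(x)^{\top}\right)^{\otimes m}\otimes H_n(x)$. Since transposition distributes over the Kronecker product, the left factor is the row vector $\left(H_n(x)^{\top}\right)^{\otimes m} = \left(H_n(x)^{\otimes m}\right)^{\top}$. I would then apply property \eqref{pr3}, which identifies the Kronecker product of a row vector with a column vector as the corresponding outer product, to write
\[
X_n^{(m)}(x) = \left(H_n(x)^{\otimes m}\right)^{\top}\otimes H_n(x) = H_n(x)\left(H_n(x)^{\otimes m}\right)^{\top}.
\]

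Applying the $vec$ operator to this outer product and using \eqref{pr2} with $\vec{y}=H_n(x)$ and $\vec{x}=H_n(x)^{\otimes m}$ then gives $vec\left(X_n^{(m)}(x)\right) = H_n(x)^{\otimes m}\otimes H_n(x) = H_n(x)^{\otimes(m+1)}$, which is the first claim. An equivalent route is induction on $m$: the base case $m=1$ is \eqref{Xn1} combined with \eqref{pr2}, and the inductive step exploits the block form \eqref{Hr}; the direct computation above is preferable, however, since it avoids the Kronecker-of-Kronecker bookkeeping.

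For the second statement I would read off the entries of $H_n(x)^{\otimes(m+1)}$ directly. Each entry is a product of $m+1$ factors, one drawn from each copy of $H_n(x)=(1,x,\dots,x^n)^{\top}$, and is therefore a monomial $x^{i_0+i_1+\cdots+i_m}$ with $i_j\in\{0,\dots,n\}$. As the $i_j$ range independently over $\{0,\dots,n\}$, the exponent $i_0+\cdots+i_m$ attains every integer value from $0$ to $(m+1)n$ and no others, so the distinct powers appearing are exactly $1,x,\dots,x^{(m+1)n}$. Removing duplicates thus leaves the vector $\left(1,x,\dots,x^{(m+1)n}\right)^{\top}=H_{n(m+1)}(x)$, in agreement with the observation recorded after Proposition \ref{shapeX}.

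The computation is short, so there is no serious obstacle; the only point that requires care is the ordering convention in the Kronecker identities \eqref{pr2}--\eqref{pr3}, namely ensuring that the outer product is $H_n(x)\left(H_n(x)^{\otimes m}\right)^{\top}$ and not its transpose, so that $vec$ returns the factors in the order $H_n(x)^{\otimes m}\otimes H_n(x)$ rather than the reverse. For the second claim the only thing to verify is that the exponent sum is surjective onto $\{0,\dots,(m+1)n\}$, which is immediate.
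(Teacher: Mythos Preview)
Your argument is correct and is precisely the direct verification the paper alludes to: you use \eqref{pr3} to rewrite $X_n^{(m)}(x)$ as the outer product $H_n(x)\left(H_n(x)^{\otimes m}\right)^{\top}$ and then \eqref{pr2} to vectorize it, and your reading of the exponents for the second claim is exactly right. The paper's proof consists solely of the words ``direct verification,'' so your write-up is simply a fleshed-out version of the same approach.
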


\begin{lemma}
\label{lemmb}
There exist an L-eliminating matrix $E_{nm+1,n+1}$ and an L-duplicating matrix $D_{nm+1,n+1}$ such that
\begin{subequations}
\begin{align}
&E_{nm+1,n+1}\left( H_n(x) \otimes H_{nm}(x)\right) = H_{n(m+1)}(x),\label{b1}\\
&D_{nm+1,n+1}H_{n(m+1)}(x) =  H_n(x) \otimes H_{nm}(x).\label{b2}
\end{align}
\end{subequations}
\begin{proof}
From a direct verification, it can be seen that
\begin{equation}
\label{eq2}
vec(H_n(x)^{\top} \otimes H_{nm}(x)) = H_n(x) \otimes H_{nm}(x) \quad \mbox{and} \quad vecL(H_n(x)^{\top} \otimes H_{nm}(x)) = H_{n(m+1)}(x).
\end{equation}
Then, from Theorem \ref{Enmth}, there exists an L-eliminating matrix $E_{nm+1,n+1}$ transforming the vectorization of $H_n(x)^{\top} \otimes H_{nm}(x)$ into its L-vectorization. By equation \eqref{eq2}, this is equivalent to saying that $E_{nm+1,n+1}$ maps $H_n(x) \otimes H_{nm}(x)$ to $H_{n(m+1)}(x)$, which is what claimed in equation \eqref{b1}. Similarly, by Theorem \ref{Dnth} there exists an L-duplicating matrix $D_{nm+1,n+1}$ satisfying equation \eqref{b2}.
\end{proof}
\end{lemma}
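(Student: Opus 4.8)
The plan is to recognize that the object $H_n(x)^{\top} \otimes H_{nm}(x)$ is really a Hankel matrix of functions, and then to invoke the existence results for the L-eliminating and L-duplicating matrices (Theorems \ref{Enmth} and \ref{Dnth}) applied to it. The entire argument reduces to verifying the two identities collected in equation \eqref{eq2}.

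First I would set $M(x) := H_n(x)^{\top} \otimes H_{nm}(x)$. By property \eqref{pr3} this equals $H_{nm}(x)\otimes H_n(x)^{\top} = H_{nm}(x)H_n(x)^{\top}$, an $(nm+1)\times(n+1)$ matrix whose $(i,j)$-entry is $x^{i-1}\cdot x^{j-1}=x^{i+j-2}$. Since the exponent depends only on $i+j$, the entries are constant along skew-diagonals, so $M(x)\in\mathcal{A}_{nm+1,\,n+1}$ is a Hankel matrix. This is the key structural observation, since it is exactly what licenses the use of Theorem \ref{Dnth}, whose hypothesis requires membership in the Hankel space $\mathcal{A}_{nm+1,n+1}$ (Theorem \ref{Enmth} needs no such structure).

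Next I would establish the two displayed identities. For the vectorization, property \eqref{pr2} gives at once $vec(M(x)) = vec(H_{nm}(x)H_n(x)^{\top}) = H_n(x)\otimes H_{nm}(x)$, which is the left half of \eqref{eq2}. For the L-vectorization, I would read off the first column and last row of $M(x)$: the first column is $(1,x,\dots,x^{nm})^{\top}=H_{nm}(x)$, supplying the powers $x^0,\dots,x^{nm}$, while the last row supplies $x^{nm},x^{nm+1},\dots,x^{nm+n}=x^{n(m+1)}$. Concatenating the first column with the portion of the last row strictly to the right of the shared corner $x^{nm}$ yields exactly the consecutive monomials $x^0,\dots,x^{n(m+1)}$ with no repetition, that is $vecL(M(x))=H_{n(m+1)}(x)$, the right half of \eqref{eq2}.

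With \eqref{eq2} in hand the conclusion is immediate. Theorem \ref{Enmth} provides an L-eliminating matrix $E_{nm+1,n+1}$ with $E_{nm+1,n+1}\,vec(M(x))=vecL(M(x))$; substituting the two identities turns this into \eqref{b1}. Dually, since $M(x)\in\mathcal{A}_{nm+1,n+1}$, Theorem \ref{Dnth} provides an L-duplicating matrix $D_{nm+1,n+1}$ with $D_{nm+1,n+1}\,vecL(M(x))=vec(M(x))$, and the same substitution gives \eqref{b2}. The only step requiring any care is the L-vectorization computation: one must confirm that the first-column and last-row entries of $M(x)$ tile the exponents $0,1,\dots,n(m+1)$ exactly once, the corner overlap at $x^{nm}$ being counted a single time---but this is precisely the ``biggest inscribed L'' picture underlying Definition \ref{vecL}, so no genuine obstacle arises.
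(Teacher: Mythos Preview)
Your proof is correct and follows essentially the same approach as the paper: you identify $H_n(x)^{\top}\otimes H_{nm}(x)$ as the Hankel matrix $H_{nm}(x)H_n(x)^{\top}\in\mathcal{A}_{nm+1,n+1}$, verify the two identities in \eqref{eq2}, and then invoke Theorems \ref{Enmth} and \ref{Dnth}. The only difference is that you spell out the ``direct verification'' (via properties \eqref{pr2}, \eqref{pr3} and an explicit reading of the first column and last row) that the paper leaves to the reader, and you make explicit the Hankel membership needed for Theorem \ref{Dnth}.
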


\subsubsection*{Proof of Proposition \ref{propEp}}
\begin{proof}
We proceed by induction on $m\ge 1$.
\begin{itemize}[leftmargin=*]
\item $m=1$: see Corollary \ref{cor11} and equation \eqref{Xn1}.
\item $m-1 \to m$: we assume the statement holds for $m-1$, namely there exists a matrix $E_{n+1}^{(m-1)}$ that applied to $vec(X_n^{(m-1)}(x))$ removes all the duplicates. By Lemma \ref{lemma}, this means that$$E_{n+1}^{(m-1)}H_n(x)^{\otimes m}=H_{nm}(x).$$ We now multiply both sides in the Kronecker sense by $H_n(x)$, and successively apply on the left the matrix $E_{nm+1, n+1}$, obtaining that
\begin{equation*}
\label{stepp}
E_{nm+1, n+1}\left[H_n(x)\otimes \left(E_{n+1}^{(m-1)}H_n(x)^{\otimes m}\right)\right] =E_{nm+1, n+1}\left[H_n(x)\otimes H_{nm}(x)\right].
\end{equation*}
From the identity $H_n(x) = I_{n+1}H_n(x)$, and applying the mixed-product property of the Kronecker product (equation  \eqref{mix}) on the left hand side, and equation \eqref{b1} on the right hand side, we get
\begin{equation*}
E_{nm+1, n+1}\left[\left(I_{n+1}\otimes E_{n+1}^{(m-1)}\right)\left(H_n(x)\otimes H_n(x)^{\otimes m}\right)\right] =H_{n(m+1)}(x).
\end{equation*}
Since $H_n(x)\otimes H_n(x)^{\otimes m} =  H_n(x)^{\otimes (m+1)}=vec(X_n^{(m)}(x))$ by Lemma \ref{lemma}, the matrix $E_{n+1}^{(m)} = E_{nm+1, n+1}\left(I_{n+1} \otimes E_{n+1}^{(m-1)}\right)$ is exactly the one removing all the duplicates from $vec(X_n^{(m)}(x))$.
\end{itemize}
\end{proof}

\subsubsection*{Proof of Proposition \ref{propDp}}
\begin{proof}
We proceed by induction on $m\ge 1$.
\begin{itemize}[leftmargin=*]
\item $m=1$: see Corollary \ref{cor22} and equation \eqref{Xn1}.
\item $m-1 \to m$: we assume the statement holds for $m-1$. Then, starting from equation \eqref{b2} and multiplying both sides with $I_{n+1} \otimes D_{n+1}^{(m-1)}$ we get
\begin{equation}
\label{stepD}
\left(I_{n+1} \otimes D_{n+1}^{(m-1)}\right)\left(D_{nm+1,n+1}H_{n(m+1)}(x)\right) =  \left(I_{n+1} \otimes D_{n+1}^{(m-1)}\right) \left(H_n(x) \otimes H_{nm}(x)\right).
\end{equation}
By the mixed-product property of the Kronecker product (equation \eqref{mix}), the right hand side is 
\begin{align*}
\left(I_{n+1} \otimes D_{n+1}^{(m-1)}\right) \left(H_n(x) \otimes H_{nm}(x)\right)&=\left(I_{n+1} H_n(x)\right)\otimes \left(D_{n+1}^{(m-1)} H_{nm}(x)\right)\\&=H_n(x)\otimes \left(D_{n+1}^{(m-1)} H_{nm}(x)\right).
\end{align*} 
From the induction hypothesis, $D_{n+1}^{(m-1)}$ satisfies $D_{n+1}^{(m-1)} H_{nm}(x) = vec(X_n^{(m-1)}(x))$, and, by Lemma \ref{lemma}, we also have $vec(X_n^{(m-1)}(x))=H_{n}(x)^{\otimes m}$. Then equation \eqref{stepD} becomes
\begin{equation*}
\left(I_{n+1} \otimes D_{n+1}^{(m-1)}\right)D_{nm+1,n+1}H_{n(m+1)}(x) =  H_n(x)\otimes  H_{n}(x)^{\otimes m}=H_{n}(x)^{\otimes (m+1)},
\end{equation*}
and the matrix $D_{n+1}^{(m)} = \left(I_{n+1} \otimes D_{n+1}^{(m-1)}\right)D_{nm+1, n+1}$ is exactly the one required.
\end{itemize}
\end{proof}

\begin{proposition}
\label{mixpro}
For every $n,m\ge 1$, $\vec{v}_{n}\in\R^{n+1}$ and $M_n^{(m-1)}\in\R^{(n+1)^m\times(n+1)^m}$, the identity holds:
\begin{equation}
\label{ide}
H_n(x)\vec{v}_{n}^{\top}\left\{ vec^{-1} \circ M_n^{(m-1)}\circ vec\, \left(X_n^{(m-1)}(x)\right)\right\} = 
\left\{ X_n^{(m)}(x)\right\}M_n^{(m-1)\top}\left\{I_{n+1}\otimes^{m-1}\vec{v}_{n}\right\}.
\end{equation}

\begin{proof}
We proceed by induction on the order $m\ge 1$.
\begin{itemize}[leftmargin=*]
\item $m=1$: starting from the left hand side of identity \eqref{ide}, we get
\begin{align*}
	H_n(x)\vec{v}_{n}^{\top}\left\{ vec^{-1} \circ M_n^{(0)}\circ vec\, \left(X_n^{(0)}(x)\right)\right\}&
	= H_n(x)\vec{v}_{n}^{\top} M_n^{(0)} H_n(x)=H_n(x)H_n(x)^{\top} M_n^{(0)\top} \vec{v}_{n}\\&= \left\{ X_n^{(1)}(x)\right\}M_n^{(0)\top}\left\{I_{n+1}\otimes^{0}\vec{v}_{n}\right\}.
\end{align*}
Remember indeed that the $vec^{-1}$ operator transforms a vector into an object with the same dimension as the argument of the operator $vec$ previously applied. But in this case the argument of $vec$ is a vector already, hence both $vec$ and $vec^{-1}$ coincide in practise with the identity operator. Moreover, $\vec{v}_{n}^{\top} M_n^{(0)} H_n(x)\in \R$ and it equals its transpose. This proves the base case.

\item $m\to m+1$: we assume identity \eqref{ide} holds for $m$ and consider $$H_n(x)\vec{v}_{n}^{\top}\left\{ vec^{-1} \circ M_n^{(m)}\circ vec\, \left(X_n^{(m)}(x)\right)\right\}.$$ In particular, $M_n^{(m)}\in \R^{(n+1)^{m+1}\times(n+1)^{m+1}}$ can be seen as made up of $(n+1)^2$ matrices of the form $M_{i,j}^{(m-1)}\in \R^{(n+1)^m\times(n+1)^m}$, $1\le i,j \le n+1$, so that $M_n^{(m)}$ looks like
\begin{equation*}
	\footnotesize
	\NiceMatrixOptions{transparent}
	M_n^{(m)} = \begin{pmatrix}
		M_{1,1}^{(m-1)} & \cdots & M_{1,n+1}^{(m-1)}\\
		\vdots  &  \ddots & \vdots\\
		M_{n+1,1}^{(m-1)} &   \cdots & M_{n+1,n+1}^{(m-1)}
	\end{pmatrix}.
\end{equation*}
The idea is then to break up the matrix $M_n^{(m)}$ into sub-matrices, for which we know the statement holds by induction hypothesis. In what follows, starting from $X_n^{(m)}(x)$, we will apply in the following order: the $vec$ operator, the matrix $M_n^{(m)}$, the $vec^{-1}$ operator and finally the matrix $H_n(x)\vec{v}_{n}^{\top}$. At this point we will be able to apply the induction hypothesis, and prove the statement.

By Lemma \ref{lemma} and associativity property of the Kronecker product, we get
\begin{equation*}
	\footnotesize
	\NiceMatrixOptions{transparent}
	vec\left(X_n^{(m)}(x)\right) = 
	H_n(x)\otimes  H_n(x)^{\otimes m} =  \begin{pmatrix}
		H_n(x)^{\otimes m}\\
		xH_n(x)^{\otimes m}\\
		\vdots\\
		x^n H_n(x)^{\otimes m}
	\end{pmatrix},
\end{equation*}
where $x^k H_n(x)^{\otimes m}\in \R^{(n+1)^m}$, $k=0, \dots, n$, thus 
\begin{equation*}
	\footnotesize
	M_n^{(m)} vec \left(X_n^{(m)}(x)\right)=
	\begin{pNiceMatrix}
		\footnotesize
		\NiceMatrixOptions{transparent}
		M_{1,1}^{(m-1)}H_n(x)^{\otimes m} +  M_{1,2}^{(m-1)}xH_n(x)^{\otimes m}+ \cdots + M_{1,n+1}^{(m-1)}x^nH_n(x)^{\otimes m}\\
		\Vdots \\
		M_{n+1,1}^{(m-1)}H_n(x)^{\otimes m} +  M_{n+1,2}^{(m-1)}xH_n(x)^{\otimes m}+ \cdots+ M_{n+1,n+1}^{(m-1)}x^nH_n(x)^{\otimes m}
	\end{pNiceMatrix}.
\end{equation*}
Applying the $vec^{-1}$ operator to the last matrix obtained, by linearity we get
\begin{multline*}
	\footnotesize
	\left(vec^{-1}\left(M_{1,1}^{(m-1)}H_n(x)^{\otimes m}\right) + 
	\cdots + x^nvec^{-1}\left(M_{1,n+1}^{(m-1)}H_n(x)^{\otimes m}\right), \right.\\\cdots\cdots,\\
	\left. vec^{-1}\left(M_{n+1,1}^{(m-1)}H_n(x)^{\otimes m}\right) + 
	\cdots + x^nvec^{-1}\left(M_{n+1,n+1}^{(m-1)}H_n(x)^{\otimes m}\right) \right),
\end{multline*}
where $vec^{-1}\left(M_{i,j}^{(m-1)}H_n(x)^{\otimes m}\right)\in \R^{(n+1)\times (n+1)^{m-1}}$, $1\le i,j \le n+1$. Multiplying the above equation by $H_n(x)\vec{v}_{n}^{\top}$, we obtain that
\begin{align*}
	&H_n(x)\vec{v}_{n}^{\top}\left\{ vec^{-1} \circ M_n^{(m)}\circ vec\, \left(X_n^{(m)}(x)\right)\right\} \\
	&= \left(H_n(x)\vec{v}_{n}^{\top}vec^{-1}\left(M_{1,1}^{(m-1)}H_n(x)^{\otimes m}\right) + 
	\cdots + x^nH_n(x)\vec{v}_{n}^{\top}vec^{-1}\left(M_{1,n+1}^{(m-1)}H_n(x)^{\otimes m}\right), \right.\\
	&\qquad \qquad \qquad \qquad \qquad \qquad \cdots\cdots,\\
	&\qquad \left. H_n(x)\vec{v}_{n}^{\top}vec^{-1}\left(M_{n+1,1}^{(m-1)}H_n(x)^{\otimes m}\right) + 
	\cdots + x^nH_n(x)\vec{v}_{n}^{\top}vec^{-1}\left(M_{n+1,n+1}^{(m-1)}H_n(x)^{\otimes m}\right) \right).
\end{align*}
Since $H_n(x)^{\otimes m} = vec\left(X_n^{(m-1)}(x) \right)$, we apply the induction hypothesis to each term
\begin{equation*}
	H_n(x)\vec{v}_{n}^{\top}vec^{-1}\left(M_{i,j}^{(m-1)}H_n(x)^{\otimes m}\right) = \left\{ X_n^{(m)}(x)\right\}M_{i,j}^{(m-1)\top}\left\{I_{n+1}\otimes^{m-1}\vec{v}_{n}\right\}
\end{equation*}
for $1\le i,j \le n+1$, so that
\begin{align*}
	&H_n(x)\vec{v}_{n}^{\top}\left\{ vec^{-1} \circ M_n^{(m)}\circ vec\, \left(X_n^{(m)}(x)\right)\right\}= \\
	&\left(\left\{ X_n^{(m)}(x)\right\}M_{1,1}^{(m-1)\top}\left\{I_{n+1}\otimes^{m-1}\vec{v}_{n}\right\}  + 
	\cdots +x^n\left\{ X_n^{(m)}(x)\right\}M_{1,n+1}^{(m-1)\top}\left\{I_{n+1}\otimes^{m-1}\vec{v}_{n}\right\}, \right.\\
	&\quad\qquad\qquad\qquad\qquad\cdots\cdots,\\
	&\left. \left\{ X_n^{(m)}(x)\right\}M_{n+1,1}^{(m-1)\top}\left\{I_{n+1}\otimes^{m-1}\vec{v}_{n}\right\}+ 
	\cdots +x^n\left\{ X_n^{(m)}(x)\right\}M_{n+1, n+1}^{(m-1)\top}\left\{I_{n+1}\otimes^{m-1}\vec{v}_{n}\right\} \right)
\end{align*}
which can also be seen as the following matrix product:
\begin{equation*}
	\footnotesize
	\begin{pNiceMatrix}
		\left\{ X_n^{(m)}(x)\right\}, & \Cdots & , x^n\left\{ X_n^{(m)}(x)\right\}
	\end{pNiceMatrix}
	\begin{pNiceMatrix}
		M_{1,1}^{(m-1)\top} \left\{I_{n+1}\otimes^{m-1}\vec{v}_{n}\right\}& \Cdots & M_{n+1,1}^{(m-1)\top}\left\{I_{n+1}\otimes^{m-1}\vec{v}_{n}\right\}\\
		\Vdots  &  \Ddots & \Vdots\\
		M_{1,n+1}^{(m-1)\top}\left\{I_{n+1}\otimes^{m-1}\vec{v}_{n}\right\} &   \Cdots & M_{n+1,n+1}^{(m-1)\top}\left\{I_{n+1}\otimes^{m-1}\vec{v}_{n}\right\}
	\end{pNiceMatrix}.
\end{equation*}
In particular, the following identity also holds
\begin{equation*}
	\left(\left\{ X_n^{(m)}(x)\right\}, \cdots, x^n\left\{ X_n^{(m)}(x)\right\}\right) = H_n(x)^{\top}\otimes \left\{ X_n^{(m)}(x)\right\} =  X_n^{(m+1)}(x),
\end{equation*}
so that we can conclude with the expression
\begin{align*}
	\NiceMatrixOptions{transparent}
	&\begin{pNiceMatrix}
		\NiceMatrixOptions{transparent}
		M_{1,1}^{(m-1)\top} \left\{I_{n+1}\otimes^{m-1}\vec{v}_{n}\right\}& \Cdots & M_{n+1,1}^{(m-1)\top}\left\{I_{n+1}\otimes^{m-1}\vec{v}_{n}\right\}\\
		\Vdots  &  \Ddots & \Vdots\\
		M_{1,n+1}^{(m-1)\top}\left\{I_{n+1}\otimes^{m-1}\vec{v}_{n}\right\} &   \Cdots & M_{n+1,n+1}^{(m-1)\top}\left\{I_{n+1}\otimes^{m-1}\vec{v}_{n}\right\}
	\end{pNiceMatrix}=\\
	& \begin{pNiceMatrix}
		\NiceMatrixOptions{transparent}
		M_{1,1}^{(m-1)\top} & \Cdots & M_{n+1,1}^{(m-1)\top}\\
		\Vdots  &  \Ddots & \Vdots\\
		M_{1,n+1}^{(m-1)\top}&   \Cdots & M_{n+1,n+1}^{(m-1)\top}
	\end{pNiceMatrix}\!
	\begin{pNiceMatrix}
		\left\{I_{n+1}\otimes^{m-1}\vec{v}_{n}\right\}& \Cdots & 0\\
		\Vdots &\Ddots& \Vdots\\
		0  & \Cdots &\left\{I_{n+1}\otimes^{m-1}\vec{v}_{n}\right\}
	\end{pNiceMatrix}
\end{align*}
where the first matrix on the right hand side coincides with $M_n^{(m)\top}$, while the second is $I_{n+1}\otimes \left\{I_{n+1}\otimes^{m-1}\vec{v}_{n}\right\} = I_{n+1}\otimes^{m}\vec{v}_{n}$. This means we proved
\begin{equation*}
	H_n(x)\vec{v}_{n}^{\top}\left\{ vec^{-1} \circ M_n^{(m)}\circ vec\, \left(X_n^{(m)}(x)\right)\right\} = \left\{X_n^{(m+1)}(x)\right\}M_n^{(m)\top}\left\{I_{n+1}\otimes^{m}\vec{v}_{n}\right\},
\end{equation*}
and therefore reached the claim.
\end{itemize}
\end{proof}
\end{proposition}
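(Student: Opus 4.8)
The plan is to prove the identity \eqref{ide} directly, \emph{without} induction on $m$, by applying the vectorization operator to both sides and invoking its injectivity. Throughout I write $N:=n+1$ and $v:=\vec{v}_{n}$, and I recall from Lemma \ref{lemma} that $vec(X_n^{(m-1)}(x))=H_n(x)^{\otimes m}$ and $vec(X_n^{(m)}(x))=H_n(x)^{\otimes(m+1)}=H_n(x)^{\otimes m}\otimes H_n(x)$. The only algebraic tools required are the two one-sided vectorization identities $vec(AX)=(I\otimes A)vec(X)$ and $vec(XB)=(B^{\top}\otimes I)vec(X)$ (special cases of the standard $vec(AXB)=(B^{\top}\otimes A)vec(X)$, derivable from \eqref{pr1}, \eqref{pr2} and \eqref{mix}; see also \cite{magnus1980, horn1991}), property \eqref{pr2}, and the mixed-product rule \eqref{mix}.

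For the left-hand side, I would set $B:=vec^{-1}\big(M_n^{(m-1)}vec(X_n^{(m-1)}(x))\big)\in\R^{N\times N^{m-1}}$, so that $vec(B)=M_n^{(m-1)}H_n(x)^{\otimes m}$. The left-hand side then equals $H_n(x)\,w^{\top}$ with $w:=B^{\top}v$ (since $v^{\top}B=(B^{\top}v)^{\top}=w^{\top}$), an outer product whose vectorization is $w\otimes H_n(x)$ by \eqref{pr2}. Writing $B^{\top}v=vec(v^{\top}B)=(I_{N^{m-1}}\otimes v^{\top})vec(B)$ via the identity $vec(AX)=(I\otimes A)vec(X)$ with $A=v^{\top}$, I obtain
\[
vec\big(\mathrm{LHS}\big)=\Big[(I_{N^{m-1}}\otimes v^{\top})\,M_n^{(m-1)}H_n(x)^{\otimes m}\Big]\otimes H_n(x).
\]

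For the right-hand side, set $C:=I_{n+1}\otimes^{m-1}v=I_{N^{m-1}}\otimes v$ (using $I_{n+1}^{\otimes(m-1)}=I_{N^{m-1}}$). Then $vec(\mathrm{RHS})=(C^{\top}\otimes I_N)\,vec\big(X_n^{(m)}(x)M_n^{(m-1)\top}\big)$, and $vec\big(X_n^{(m)}(x)M_n^{(m-1)\top}\big)=(M_n^{(m-1)}\otimes I_N)\,vec(X_n^{(m)}(x))=(M_n^{(m-1)}\otimes I_N)\big(H_n(x)^{\otimes m}\otimes H_n(x)\big)$. Applying \eqref{mix} collapses the last product to $u\otimes H_n(x)$ with $u:=M_n^{(m-1)}H_n(x)^{\otimes m}$. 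Since $C^{\top}\otimes I_N=(I_{N^{m-1}}\otimes v^{\top})\otimes I_N$, a second application of \eqref{mix} gives $vec(\mathrm{RHS})=\big[(I_{N^{m-1}}\otimes v^{\top})u\big]\otimes H_n(x)$, which is exactly the expression found for $vec(\mathrm{LHS})$. Injectivity of $vec$ then yields \eqref{ide}, and the edge case $m=1$ (where $I_{N^{m-1}}=I_1$ and $C=v$) is subsumed in the same computation.

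The routine but error-prone part, and hence the main obstacle, is the bookkeeping of the Kronecker identity-factor sizes: specifically, justifying the regroupings $B^{\top}v=(I_{N^{m-1}}\otimes v^{\top})vec(B)$ and $(I_{N^{m-1}}\otimes v^{\top}\otimes I_N)(u\otimes H_n(x))=[(I_{N^{m-1}}\otimes v^{\top})u]\otimes H_n(x)$, each of which is correct only once one checks that the three Kronecker factors act on the matching tensor slots of $u\otimes H_n(x)\in\R^{N^{m}}\otimes\R^{N}$. I expect this tensor-slot matching — rather than any conceptual difficulty — to be the delicate point; the payoff is that the generic matrix $M_n^{(m-1)}$ never needs to be decomposed into blocks, so no induction on $m$ is needed.
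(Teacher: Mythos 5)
Your proof is correct, and it takes a genuinely different route from the paper's. The paper proves \eqref{ide} by induction on $m$: it writes $M_n^{(m)}$ as an $(n+1)\times(n+1)$ array of blocks $M_{i,j}^{(m-1)}$, pushes $vec$, the matrix, $vec^{-1}$ and the factor $H_n(x)\vec{v}_{n}^{\top}$ through the block decomposition of $vec\left(X_n^{(m)}(x)\right)=H_n(x)\otimes H_n(x)^{\otimes m}$, applies the inductive hypothesis to each of the $(n+1)^2$ resulting terms, and reassembles everything into $X_n^{(m+1)}(x)\,M_n^{(m)\top}\left\{I_{n+1}\otimes^{m}\vec{v}_{n}\right\}$. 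You instead note that both sides of \eqref{ide} are matrices of the same shape $(n+1)\times(n+1)^{m-1}$, so comparing their vectorizations is legitimate, and the one-sided identities $vec(AX)=(I\otimes A)vec(X)$ and $vec(XB)=(B^{\top}\otimes I)vec(X)$ --- indeed derivable from \eqref{pr1}, \eqref{pr2} and \eqref{mix}, and standard in \cite{magnus1980, horn1991}, so nothing outside the paper's toolkit --- together with Lemma \ref{lemma} collapse both sides to $\left[\left(I_{(n+1)^{m-1}}\otimes\vec{v}_{n}^{\top}\right)M_n^{(m-1)}H_n(x)^{\otimes m}\right]\otimes H_n(x)$. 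I checked the two steps you flagged as delicate and they are sound: $B^{\top}\vec{v}_{n}=vec(\vec{v}_{n}^{\top}B)=\left(I_{(n+1)^{m-1}}\otimes\vec{v}_{n}^{\top}\right)vec(B)$ is the $vec(AX)$ identity with the identity factor sized by the $(n+1)^{m-1}$ columns of $B$, while the regrouping $\left(C^{\top}\otimes I_{n+1}\right)\left(u\otimes H_n(x)\right)=\left(C^{\top}u\right)\otimes H_n(x)$, with your $C=I_{n+1}\otimes^{m-1}\vec{v}_{n}=I_{(n+1)^{m-1}}\otimes\vec{v}_{n}$ and $u=M_n^{(m-1)}H_n(x)^{\otimes m}$, is a single two-factor application of \eqref{mix} with conformable sizes $(n+1)^{m-1}\times(n+1)^{m}$ against $(n+1)^{m}\times 1$, so the finer three-slot matching never actually needs to be invoked; the degenerate case $m=1$ is likewise subsumed via $A\otimes^{0}B=B$ as you claim. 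What each approach buys: yours is shorter, non-inductive, treats a generic $M_n^{(m-1)}$ in one stroke without any block decomposition, and exposes the proposition as an instance of the standard $vec$--Kronecker calculus; the paper's induction is lengthier bookkeeping but stays with the four listed properties only (never using the $vec(AXB)$ rule) and makes explicit how the recursion $X_n^{(m+1)}(x)=H_n(x)^{\top}\otimes X_n^{(m)}(x)$ and the factorization $I_{n+1}\otimes^{m}\vec{v}_{n}=I_{n+1}\otimes\left(I_{n+1}\otimes^{m-1}\vec{v}_{n}\right)$ emerge, mirroring how the identity is later consumed in the proof of Theorem \ref{theoremformula}.
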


\subsubsection*{Proof of Theorem \ref{theoremformula}}
\begin{proof}
Following the same idea as in the proof of Theorem \ref{odesol2}, we start by proving that
\begin{equation}
	\label{expr}
	\E\left[\left.X_n^{(r)}(Y(s))\right|\F_t\right]\\=vec^{-1}\circ e^{\tilde{G}_n^{(r)}(s-t)}\circ vec \,\left( X_n^{(r)}(Y(t))\right)
\end{equation}
for $\tilde{G}_n^{(r)}=D_{n+1}^{(r)}G_{n(r+1)}E_{n+1}^{(r)}$.  By equation \eqref{martingale} we write
\begin{equation*}
\E\left[\left.X_n^{(r)}(Y(s))\right|\F_t\right] = X_n^{(r)}(Y(t))+ \int_t^{s} \E\left[\left.\G \left(X_n^{(r)}(Y(u))\right)\right|\F_t\right] du,
\end{equation*} 
and applying the $vec$ operator on both sides we get
\begin{equation}
\label{oder}
\E\left[\left.vec\left(X_n^{(r)}(Y(s))\right)\right|\F_t\right] = vec\left(X_n^{(r)}(Y(t))\right)+ \int_t^{s} \E\left[\left.\G vec\left(X_n^{(r)}(Y(u))\right)\right|\F_t\right] du.
\end{equation} 
By Proposition \ref{propEp}, there exists an $r$-th L-eliminating matrix $E_{n+1}^{(r)}$ such that
\begin{equation}
\label{1}
E_{n+1}^{(r)}vec\left(X_n^{(r)}(x)\right) = H_{n(r+1)}(x).
\end{equation}
From Theorem \ref{genm}, there also exists a generator matrix $G_{n(r+1)}$ such that 
\begin{equation}
\label{2}
\G H_{n(r+1)}(x) = G_{n(r+1)}H_{n(r+1)}(x),
\end{equation}
and by Proposition \ref{propDp}, an $r$-th L-duplicating matrix $D_{n+1}^{(r)}$ such that
\begin{equation}
\label{3}
D_{n+1}^{(r)}H_{n(r+1)}(x) = vec\left(X_n^{(r)}(x)\right).
\end{equation}
Combining equations \eqref{1}, \eqref{2}, \eqref{3} with equation \eqref{oder} we get
\begin{equation*}
\E\left[\left.vec\left(X_n^{(r)}(Y(s))\right)\right|\F_t\right] = vec\left(X_n^{(r)}(Y(t))\right)+ D_{n+1}^{(r)}G_{n(r+1)}E_{n+1}^{(r)}\int_t^{s} \E\left[\left.vec\left(X_n^{(r)}(Y(u))\right)\right|\F_t\right] du,
\end{equation*} 
and, proceeding the proof as in Theorem \ref{odesol2}, we obtain that for every $n\ge 1$ and $r\ge 0$, the  matrix $\tilde{G}_n^{(r)}=D_{n+1}^{(r)}G_{n(r+1)}E_{n+1}^{(r)}\in \R^{(n+1)^{r+1}\times(n+1)^{r+1}}$ is such that the expectation formula  \eqref{expr} holds.

We now proceed by induction on the number of polynomials $m\ge 1$ to prove the correlator formula.
\begin{itemize}[leftmargin=*]
\item $m=1$: the formula coincides with the one given in Theorem \ref{odesol2}.
\item $m\to m+1$: we suppose the correlator formula holds for $m$ and consider $m+1$ polynomial functions. By the tower rule and the induction hypothesis, we write that
\begin{align*}
	&C_{p_0,\dots, p_{m+1}}(s_0,\dots,s_{m+1};t) =
	\E\left[\left.p_{m+1}\left(Y(s_0)\right) C_{p_0,\dots, p_m}(s_1,\dots, s_{m+1};s_0)   \right|\F_t\right]\\
&=\E\left[\vec{p}_{m+1}^{\top}H_n(Y(s_0))\vec{p}_{m}^{\top} \left\{ vec^{-1} \circ e^{\tilde{G}_n^{(m)}(s_1-s_0)}\circ vec\left( X_n^{(m)}(Y(s_0))\right)\right\} \cdot \right.\\ &\qquad  \quad \qquad  \qquad \qquad \qquad \qquad \qquad \qquad  \left.\cdot \left.\prod_{k=1}^{m}e^{\tilde{G}_n^{(m-k)\top}(s_{k+1}-s_{k})}\left\{I_{n+1}\otimes^{m-k}\vec{p}_{m-k} \right\}\right|\F_t\right].
\end{align*}
From Proposition \ref{mixpro}, we also have the following crucial equality:
\begin{multline*}
	H_n(Y(s_0))\vec{p}_{m}^{\top}\left\{ vec^{-1} \circ e^{\tilde{G}_n^{(m)}(s_1-s_0)}\circ vec\, \left(X_n^{(m)}(Y(s_0))\right)\right\} \\ = 
	\left\{ X_n^{(m+1)}(Y(s_0))\right\}e^{\tilde{G}_n^{(m)\top}(s_1-s_0)}\left\{I_{n+1}\otimes^{m}\vec{p}_{m}\right\}.
\end{multline*}
Combining the previous results and equation \eqref{expr}, we can write that
\begingroup
\allowdisplaybreaks
\begin{equation*}
	\begin{aligned}
		&C_{p_0,\dots, p_{m+1}}(s_0,\dots,s_{m+1};t) \\
		&=\E\left[\vec{p}_{m+1}^{\top}\left\{ X_n^{(m+1)}(Y(s_0))\right\}e^{\tilde{G}_n^{(m)\top}(s_1-s_0)}\left\{I_{n+1}\otimes^{m}\vec{p}_{m}\right\} \cdot\right.\\
		&\left.\qquad \qquad \qquad \qquad \qquad \qquad \qquad \qquad \qquad \qquad  \left.\cdot\prod_{k=1}^{m}e^{\tilde{G}_n^{(m-k)\top}(s_{k+1}-s_{k})}\left\{I_{n+1}\otimes^{m-k}\vec{p}_{m-k} \right\}\right|\F_t\right]\\
		&=\vec{p}_{m+1}^{\top}\E\left[ \left.X_n^{(m+1)}(Y(s_0))\right|\F_t\right] \prod_{k=0}^{m}e^{\tilde{G}_n^{(m-k)\top}(s_{k+1}-s_{k})}\left\{I_{n+1}\otimes^{m-k}\vec{p}_{m-k} \right\}\\
		&=\vec{p}_{m+1}^{\top}\left\{vec^{-1}\circ e^{\tilde{G}_n^{(m+1)}(s_0-t)}\circ vec \,\left( X_n^{(m+1)}(Y(t))\right)\right\} \prod_{k=0}^{m}e^{\tilde{G}_n^{(m-k)\top}(s_{k+1}-s_{k})}\left\{I_{n+1}\otimes^{m-k}\vec{p}_{m-k} \right\}.
	\end{aligned}
\end{equation*}
\endgroup
Rearranging the index in the product of the last equation, we get the formula for $m+1$ polynomial functions and conclude the proof.
\end{itemize}
\end{proof}

\subsubsection*{Proof of Theorem \ref{Gn}}
\begin{proof}
We proceed by induction on the dimension $n\ge 2$.
\begin{itemize}[leftmargin=*]
\item $n=2$: see Example \ref{G2ex}.
\item $n-1\to n$: we assume the recursion formula holds for $n-1$. We then need $\G x^n$. By equations \eqref{genD} and \eqref{poldefcond}, we write that
\begin{equation*}
\G x^n = n(b_0+b_1x)x^{n-1} +\frac{1}{2}n(n-1)(\sigma_0+\sigma_1x+\sigma_2x^2)x^{n-2}+\int_{\R}\left(\left(x+z
\right)^n-x^n-nx^{n-1}z\right)\ell(x,dz).
\end{equation*}
In particular, by binomial expansion
\begin{align*}
&\int_{\R}\left(\left(x+z
\right)^n-x^n-nx^{n-1}z\right)\ell(x,dz) =\sum_{k=2}^n\binom{n}{k}x^{n-k}\int_{\R}z^k\ell(x,dz)\\
&=\sum_{k=2}^n\sum_{i=0}^k\binom{n}{k}\xi_i^kx^{n-k+i}=\sum_{i=0}^n\left(\sum_{k=\max(2,i)}^n\binom{n}{k}\xi_{k-i}^k\right)x^{n-i},
\end{align*}
so that
\begin{equation*}
\G x^n = \left(nb_1+\frac{1}{2}n(n-1)\sigma_2\right)x^n + \left(nb_0 +\frac{1}{2}n(n-1)\sigma_1\right)x^{n-1} +\sum_{i=0}^n\left(\sum_{k=\max(2,i)}^n\binom{n}{k}\xi_{k-i}^k\right)x^{n-i},
\end{equation*}
which must be rearranged to collect the coefficients of $x^k$, $k=0, \dots, n$, to be inserted in the last row of $G_n$. This leads to $(a_n^n, a_n^{n-1},\dots, a_n^1, c_n)^{\top}$ as defined in the theorem and concludes the proof.
\end{itemize}
\end{proof}

\subsubsection*{Proof of Theorem \ref{Gnk}}
\begin{proof}
We first prove the base case $n=1$. We consider the definition of matrix exponential as infinite sum of powers. If $b_1\ne 0$ then for every $k\ge 1$,
$G_1^k =\begin{pmatrix}
	0 & 0 \\
	b_0b_1^{k-1} & b_1 ^k\end{pmatrix}$; if $b_1=0$ then for every $k\ge 2$, $G_1^k = \begin{pmatrix}
	0 & 0 \\
	0 & 0
\end{pmatrix}$. With $G_1^0 = I_{2}$, for $b_1\ne0$ we get:
\begin{align*}
	e^{G_1t} &=\sum_{k=0}^{\infty}\frac{\left(G_1t\right)^k}{k!}= I_{2}+ \sum_{k=1}^{\infty}\frac{t^k}{k!}\begin{pmatrix}
		0 & 0 \\
		b_0b_1^{k-1} & b_1 ^k 
	\end{pmatrix}\\&= 
	\begin{pmatrix}
		1 & 0 \\
		b_0\sum_{k=1}^{\infty}\frac{t^kb_1^{k-1}}{k!} & 1+\sum_{k=1}^{\infty}\frac{t^kb_1 ^k}{k!} 
	\end{pmatrix}=\begin{pmatrix}
		1 & 0 \\
		\frac{b_0}{b_1}\left(e^{b_1t}-1\right) & e^{b_1t}
	\end{pmatrix},
\end{align*}
and similarly for $b_1=0$, so that the base case is proved.
	
We now set $n> 1$. For $\Lambda_n := c_nI_{n}-G_{n-1}$, it is easy to verify that for every $k\ge 1$ the powers of $G_n$ are given by
\begin{equation*}
\label{Gnkeq}
G_n^k =\begin{pmatrix}
	G_{n-1}^k & \vec{0}_n \\
	\vec{a}_n^{\top}\Lambda_n^{-1}\left(c_n^kI_{n}-G_{n-1}^k\right) & c_n^k \\
\end{pmatrix},
\end{equation*}
provided that $\Lambda_n$ is invertible. More precisely, as a consequence of the recursion formula for $G_n$, the power $G_n^k$ involves $\Lambda_n^{-1}$, but it also involves $G_{n-1}^k$, that means it involves $\Lambda_{n-1}^{-1}$, and so on. Thus the matrices $\Lambda_r^{-1}$ are all involved in $G_n^k$ for every $2\le r \le n$. In particular, we need all these to be invertible. For a fixed $r$, the determinant of $\Lambda_r$ must then be different from zero for every $2\le r \le n$. In particular, since $G_{r-1}$ is a (lower) triangular matrix, $\Lambda_r$ is a (lower) triangular matrix with determinant given by the product of the elements on the main diagonal. By Theorem \ref{Gn} and equation \eqref{diag} we then get:
\begin{equation}
	\label{cond1}
	\det\left(c_rI_{r}-G_{r-1}\right) = c_r\prod_{j=1}^{r-1}\left(c_r-c_j\right)\ne 0
\end{equation} 
where $c_1 = b_1$. Condition \eqref{cond1} is equivalent to ask that $c_r \ne 0$ and $c_r\ne c_j$ for every $1\le j \le r-1$. One easily notice that asking these conditions to hold for every $2 \le r \le n$ means to ask that all the coefficients $\{c_j\}_{j=2}^n$ are not null, and that $\{c_j\}_{j=1}^n$ are all different among each others. This coincides with condition \eqref{lambdacond}. 

Then, with $G_n^0 = I_{n+1}$, we get:
\begin{align*}
e^{G_nt} &=\sum_{k=0}^{\infty}\frac{\left(G_nt\right)^k}{k!}=I_{n+1} + \sum_{k=1}^{\infty}\frac{t^k}{k!}\begin{pmatrix}
	G_{n-1}^k & \vec{0}_n \\
	\vec{a}_n^{\top}\Lambda_n^{-1}\left(c_n^kI_{n}-G_{n-1}^k\right)& c_n^k 
\end{pmatrix}\\
&=\begin{pmatrix}
	I_{n} + \sum_{k=1}^{\infty}\frac{\left(G_{n-1}t\right)^k}{k!} & 0 \\
	\vec{a}_n^{\top}\Lambda_n^{-1}\sum_{k=1}^{\infty}\frac{t^k}{k!} \left(c_n^kI_{n}-G_{n-1}^k\right)& 1+\sum_{k=1}^{\infty}\frac{t^kc_n ^k}{k!} 
\end{pmatrix} \\&= \begin{pmatrix}
	e^{G_{n-1}t} & \vec{0}_n \\
	\vec{a}_n^{\top}\Lambda_n^{-1}\left(I_{n}\sum_{k=1}^{\infty}\frac{(c_nt)^k}{k!}-\sum_{k=1}^{\infty}\frac{(G_{n-1}t)^k}{k!}\right) & e^{c_nt} 
\end{pmatrix}=\begin{pmatrix}
	e^{G_{n-1}t} & \vec{0}_n \\
	\vec{a}_n^{\top}\Lambda_n^{-1}\left(e^{c_nt}I_{n}-e^{G_{n-1}t}\right) & e^{c_nt} 
\end{pmatrix}
\end{align*}
which proves the matrix exponential formula and concludes the proof.
\end{proof}

\subsubsection*{Proof of Lemma \ref{corrr}}
\begin{proof}
	We consider the definition of $c_j$ in equation \eqref{an}: if $\ell(x,dz)\equiv0$, then for every $j=2, \dots, n$, $c_j= jb_1 +\frac{1}{2}j(j-1)\sigma_2,$ so that the first condition in \eqref{lambdacond}
	equals $b_1 \ne -\frac{(j-1)}{2}\sigma_2$, equivalent also to 
	\begin{equation}
		\label{condl1}
		b_1 \ne -\frac{k}{2}\sigma_2, \; \mbox{for every }\; 1 \le k \le n-1.
	\end{equation}
	In the second condition in \eqref{lambdacond}, we require $c_j \ne c_i$, $1 \le j  < i\le n$, that is
	\begin{equation*}
		jb_1 +\frac{1}{2}j(j-1)\sigma_2 \ne ib_1 +\frac{1}{2}i(i-1)\sigma_2, \quad \mbox{for every }\; 1 \le j  < i\le n,
	\end{equation*}
	which, after some simplifications, can be rewritten as $b_1 \ne -\frac{(j+i-1)}{2}\sigma_2$ for every $1 \le j  < i\le n$. In particular, since $3 \le j+i \le 2n-1$, this is also equivalent to
	\begin{equation*}
		b_1 \ne -\frac{k}{2}\sigma_2, \quad \mbox{for every }\; 2 \le k\le 2(n-1).
	\end{equation*}
	Adding this to the condition previously found in \eqref{condl1}, we conclude the proof.
\end{proof}

\subsubsection*{Proof of Proposition \ref{key}}
\begin{proof}
By equations \eqref{MM} and \eqref{Q} and linearity of the extended generator $\G$, we write the equalities
\begin{equation*}
	\label{equ}
	J_n M_nH_n(x) = J_n Q_n(x)  = \G Q_n(x) = \G\left(M_nH_n(x)\right)= M_n\G H_n(x) = M_nG_n H_n(x).
\end{equation*}
By comparing the first and the last terms we get $J_n M_n = M_nG_n$, which, rearranged, gives the first equality of the proposition. As a direct consequence, from the definition of exponential function as infinite sum of powers and by the identity $M_n^{-1}M_n=I_n$, we get also the second equality.
\end{proof}

\subsubsection*{Proof of Proposition \ref{propdelta}}
\begin{proof}
From Theorem \ref{theoremformula}, we easily notice that the only dependence of $C_{p_0,\dots, p_m}(s_0,\dots, s_m;t)$ on the initial condition $Y(t)$ is inside the matrix of function $X_n^{(m)}(Y(t))$, which is defined by equation \eqref{Xnm} as the $m$-th Kronecker product $X_n^{(m)}(x)=H_n(x)^{\top} \otimes^mH_n(x)$. In particular, by associativity property of the Kronecker product, we can also write that $X_n^{(m)}(x)=H_n(x)^{\top} \otimes X_n^{(m-1)}(x)$. By the product rule  applied to $H_n(Y(t))^{\top} \otimes X_n^{(m-1)}(Y(t))$, we obtain then the recursive formula for  $\frac{\partial X_n^{(m)}(Y(t))}{\partial Y(t)}$ as in the statement, where $$\frac{\partial H_n(Y(t))}{\partial Y(t)} = \frac{\partial X_n^{(0)}(Y(t))}{\partial Y(t)}  =\left(0, 1, 2Y(t), \dots, nY(t)^{n-1}\right).$$ In particular, this can be seen as the vector product of $\left(0,1,\dots, n\right)$ with $ \left(0, H_{n-1}(Y(t))^{\top}\right)^{\top}$. This concludes the proof.
\end{proof}

\subsubsection*{Proof of Proposition \ref{proptheta}}
\begin{proof}
From the correlator formula in Theorem \ref{theoremformula}, we distinguish three different cases, namely $j=0$, $1\le j < m$, and $j=m$, which we shall analyse separately.

For $j=0$, the time point $s_0$ appears in $C_{p_0,\dots, p_m}(s_0,\dots, s_m;t)$ two times: once inside the curl parenthesis in the matrix exponential $e^{\tilde{G}_n^{(m)}(s_0-t)}$, and once in the product $\prod_{k=1}^m$ for $k=1$ in the matrix exponential $e^{\tilde{G}_n^{(m-1)\top}(s_1-s_0)}$. By the product rule, one gets $\Theta_0$.

For $1\le j < m$, the time point $s_j$ also appears in $C_{p_0,\dots, p_m}(s_0,\dots, s_m;t)$ two times, both in the product $\prod_{k=1}^m$, first for $k=j$ in the matrix exponential $e^{\tilde{G}_n^{(m-j)\top}(s_j-s_{j-1})}$, and then for $k=j+1$ in the matrix exponential $e^{\tilde{G}_n^{(m-j-1)\top}(s_{j+1}-s_{j})}$. By the product rule, one gets $\Theta_j$.

For $j=m$, the time point $s_m$ appears in $C_{p_0,\dots, p_m}(s_0,\dots, s_m;t)$ only one time, that is in the product $\prod_{k=1}^m$ for $k=m$ in the matrix exponential $e^{\tilde{G}_n^{(0)\top}(s_m-s_{m-1})}$. By differentiation, one gets $\Theta_m$.

This concludes the proof.
\end{proof}

\end{document}